\newtheorem{theorem}{Theorem}[section]
\newtheorem{lemma}[theorem]{Lemma}
\newtheorem{corollary}[theorem]{Corollary}
\newcommand{\mdl}[1]{\mathcal{#1}}  
\newcommand{\na}[1]{\mathit{#1}}    
\newcommand{\limplies}{\rightarrow}
\newcommand{\ex}[1]{\exists #1 \;} 
\newcommand{\fa}[1]{\forall #1 \;} 
\newcommand{\st}{ \; | \; } 
\newcommand{\NN}{\mathbb{N}}
\newcommand{\QQ}{\mathbb{Q}}
\newcommand{\ZZ}{\mathbb{Z}}
\newcommand{\RR}{\mathbb{R}}
\newcommand{\CC}{\mathbb{C}} 
\newcommand{\TT}{\mathbb{T}} 
\newcommand{\lip}{\langle}
\newcommand{\rip}{\rangle}
\newcommand{\la}{(}
\newcommand{\ra}{)}
\newenvironment{proof}{\topsep=\smallskipamount \partopsep=0pt
 \begin{trivlist} \itemindent=0pt
\item[\hskip \labelsep\emph{Proof.}]}{\unskip\nobreak\hfil\penalty50
   \quad\hbox{}\nobreak\hfil $\square$
   \parfillskip=0pt \finalhyphendemerits=0 \par \end{trivlist}\medskip}
\begin{document}

\title{Local stability of ergodic averages\footnote{Work by the first
    author partially supported by NSF grant DMS-0401042. Work by the
    second author partially supported by a postdoctoral grant from the
    Villum Kann Rasmussen Foundation.}}

\author{Jeremy Avigad, Philipp Gerhardy, and Henry Towsner}

\maketitle

\begin{abstract}
  We consider the extent to which one can compute bounds on the rate
  of convergence of a sequence of ergodic averages. It is not
  difficult to construct an example of a computable Lebesgue-measure
  preserving transformation of $[0,1]$ and a characteristic function
  $f = \chi_A$ such that the ergodic averages $A_n f$ do not converge
  to a computable element of $L^2([0,1])$. In particular, there is no
  computable bound on the rate of convergence for that sequence. On
  the other hand, we show that, for any nonexpansive linear operator
  $T$ on a separable Hilbert space, and any element $f$, it is
  possible to compute a bound on the rate of convergence of $\la A_n f
  \ra$ from $T$, $f$, and the norm $\| f^* \|$ of the limit. In
  particular, if $T$ is the Koopman operator arising from a computable
  ergodic measure preserving transformation of a probability space
  $\mdl X$ and $f$ is any computable element of $L^2(\mdl X)$, then
  there is a computable bound on the rate of convergence of the
  sequence $\la A_n f \ra$.

  The mean ergodic theorem is equivalent to the assertion that for
  every function $K(n)$ and every $\varepsilon > 0$, there is an $n$
  with the property that the ergodic averages $A_m f$ are stable to
  within $\varepsilon$ on the interval $[n,K(n)]$. Even in situations
  where the sequence $\la A_n f \ra$ does not have a computable limit,
  one can give explicit bounds on such $n$ in terms of $K$ and $\| f
  \| / \varepsilon$. This tells us how far one has to search to find
  an $n$ so that the ergodic averages are ``locally stable'' on a
  large interval. We use these bounds to obtain a similarly explicit
  version of the pointwise ergodic theorem, and show that our bounds
  are qualitatively different from ones that can be obtained using
  upcrossing inequalities due to Bishop and Ivanov.

  Finally, we explain how our positive results can be viewed as an
  application of a body of general proof-theoretic methods falling
  under the heading of ``proof mining.''
\end{abstract}

\section{Introduction}

Let $T$ be a nonexpansive linear operator on a Hilbert space $\mdl H$,
that is, a linear operator satisfying $\| T f \| \leq \| f \|$ for all
$f \in \mdl H$. For each $n \geq 1$, let $S_n f = f + T f + \ldots +
T^{n-1} f$ denote the sum of first $n$ iterates of $T$ on $f$, and let
$A_n f = \frac{1}{n} S_n f$ denote their average. The von Neumann mean
ergodic theorem asserts that the sequence $\la A_n f \ra$ converges in
the Hilbert space norm. The most important instance of the mean
ergodic theorem occurs when $\mdl H$ is the space $L^2(\mdl X)$ of
square-integrable functions on a probability space $\mdl X = \la X,
\mdl B, \mu\ra$, and $T$ is the Koopman operator $Tf = f \circ \tau$
associated to a measure preserving transformation, $\tau$, of that
space.  In that setting, the Birkhoff pointwise ergodic theorem
asserts that the sequence $\la A_n f \ra$ converges pointwise almost
everywhere, and in the $L^1$ norm, for any $f$ in $L^1(\mdl X)$. The
transformation $\tau$ is said to be \emph{ergodic} if there are no
nontrivial invariant subsets, in the sense that $\tau^{-1}(A) = A$
implies $\mu(A) = 0$ or $\mu(A) = 1$. In that case, $\lim_{n \to
  \infty} A_n f$ is a.e.~equal to the constant function $\int f \;
d\mu$, which is to say, almost every sequence of time averages
converges to the average over the entire space.

It is known that, in general, the sequence $\la A_n f \ra$ can
converge very slowly. For example, Krengel \cite{krengel:78} has shown
that for any ergodic automorphism of the unit interval under Lebesgue
measure, and any sequence $(a_n)$ of positive reals converging to
$0$, no matter how slowly, there is a subset $A$ of the interval
such that, if $\chi_A$ denotes the characteristic function of $A$, then
\[
\lim_{n \to \infty} \frac{1}{a_n}| (A_n \chi_A)(x) - \mu(A)| = \infty
\]
almost everywhere, and
\[
\lim_{n \to \infty} \frac{1}{a_n}\| A_n \chi_A - \mu(A)\|_p = \infty
\]
for every $p \in [1,\infty)$. For related results and references, see
\cite[Section 0.2]{kachurovskii:96} and \cite[notes to Section
  1.2]{krengel:85} for related results and references.) Here, however,
  we will be concerned with the extent to which a bound on the rate of
  convergence can be computed from the initial data. That is, given
  $\mdl H$, $T$, and $f$ in the statement of the von Neumann ergodic
  theorem, we can ask whether it is possible to compute, for each
  rational $\varepsilon > 0$, a value $r(\varepsilon)$, such that for
  every $n$ greater or equal to $r(\varepsilon)$, we have $\|
  A_{r(\varepsilon)}f - A_n f \| < \varepsilon$.

Determining whether such an $r$ is computable from the initial data is
not the same as determining its rate of growth. For example, if $\la
a_n \ra_{n \in \NN}$ is any computable sequence of rational numbers
that decreases monotonically to $0$, then a rate of convergence can be
computed trivially from the sequence: given $\varepsilon$, one need
only run through the elements of the sequence and until one of them
drops below $\varepsilon$. On the other hand, it is relatively easy to
construct a computable sequence $\la b_n \ra$ of rational numbers that
converge to $0$, for which there is \emph{no} computable bound on the
rate of convergence. It is also relatively easy to construct a
computable, monotone, bounded sequence $\la c_n \ra$ of rationals that
does not have a computable limit, which implies that there is no
computable bound on the rate of convergence of this sequence,
either. These examples are discussed in
Section~\ref{computability:section}.

Indeed, in Section~\ref{computability:section}, we show that there are
a computable Lebesgue-measure preserving transformation of the unit
interval $[0,1]$ and a computable characteristic function $f = \chi_A$
such that the limit of the sequence $\la A_n f\ra$ is not a computable
element of $L^2([0,1])$. For this we rely on standard notions of
computability for Hilbert spaces, which we review there. The
noncomputability of the limit implies, in particular, that there is no
computable bound on the rate of convergence of $\la A_n f \ra$. On the
other hand, we show that, for any nonexpansive linear operator $T$ on
a separable Hilbert space and any element $f$, one can compute a bound
on the rate of convergence of $\la A_n f \ra$ from $T$, $f$, and the
norm $\| f^* \|$ of the limit.  In particular, if $T$ is the Koopman
operator arising from a computable ergodic measure preserving
transformation of a probability space $\mdl X$ and $f$ is any
computable element of $L^2(\mdl X)$, then $\| f^* \|$ is equal to $|
\int f \; d\mu |$, which is computable; and hence there is a
computable bound on the rate of convergence.

In situations where the rate of convergence of the ergodic averages is
not computable from $T$ and $f$, is there any useful information to be
had? The logical form of a statement of convergence provides some
guidance. The assertion that the sequence $\la A_n f \ra$ converges
can be represented as follows:
\begin{equation}
\label{eq:a}
\fa {\varepsilon > 0} \ex n \fa {m \geq n} (\| A_m f - A_n f \| <
\varepsilon).
\end{equation}
A bound on the rate of convergence is a function $r(\varepsilon)$ that
returns a witness to the existential quantifier for each $\varepsilon
> 0$. It is the second universal quantifier that leads to
noncomputability, since, in general, there is no finite
test that can determine whether a particular value of $n$ is large
enough.  But, classically, the statement of convergence is equivalent
to the following:
\begin{equation}
\label{eq:b}
\fa {\varepsilon > 0, M : \NN \to \NN } \ex n 
(M(n) \geq n \limplies \| A_{M(n)} f - A_n f \| < \varepsilon).
\end{equation}
To see this, note that if, for some $\varepsilon > 0$, the existential
assertion in (\ref{eq:a}) is false, then for every $n$ there is an $m
\geq n$ such that $\| A_m f - A_n f \| \geq \varepsilon$. In that case,
$\varepsilon$ together with any function $M(n)$ that returns such an
$m$ for each $n$ represents a counterexample to
(\ref{eq:b}). Assertion (\ref{eq:a}) is therefore equivalent to the
statement that there is no such counterexample, i.e.~assertion
(\ref{eq:b}). 

But if the space is explicitly presented and (\ref{eq:b}) is true,
then for each $\varepsilon > 0$ and $M$ one can compute a witness to
the existential quantifier in (\ref{eq:b}) by simply trying values of
$n$ until one satisfying $\| A_{M(n)} - A_n \|$ is found. Thus,
(\ref{eq:b}) has an inherent computational interpretation. In
particular, given any function $K(n)$, suppose we apply (\ref{eq:b})
to a function $M(n)$ which, for each $n$, returns a value $m$ in the
interval $[n,K(n)]$ maximizing $\| A_m f - A_n f \|$. In that case,
(\ref{eq:b}) asserts
\[
\fa {\varepsilon > 0} \ex n \fa{m \in [n,K(n)]} \| A_m f - A_n f \|
< \varepsilon.
\]
In other words, if $r(\varepsilon)$ is a function producing a
witness to the existential quantifier, then, rather than computing an
absolute rate of convergence, $r(\varepsilon)$ provides, for each
$\varepsilon > 0$, a value $n$ such that the ergodic averages $A_m f$
are stable to within $\varepsilon$ on the interval $[n,K(n)]$.

It is now reasonable to ask for an explicit bound on $r(\varepsilon)$,
expressed in terms of in terms of $K$, $T$, $f$, and $\varepsilon$. In
Section~\ref{constructive:met:section}, we obtain bounds on
$r(\varepsilon)$ that, in fact, depend only on $K$ and $\rho = \lceil
\| f \| / \varepsilon \rceil$. Since the bound on the rate of
convergence is clearly monotone with $\rho$, our results show that,
for fixed $K$, the bounds are uniform on any bounded region of the
Hilbert space and independent of $T$. As special cases, we have the
following:
\begin{itemize}
\item If $K = n^{O(1)}$, then $r(f, \varepsilon) = 2^{2^{O(\rho^2
      \log \log \rho)}}$.
\item If $K = 2^{O(n)}$, then $r(f, \varepsilon) = 2^1_{O(\rho^2)}$,
  where $2_n^x$ denotes the $n$th iterate of $y \mapsto 2^y$ starting
  with $x$.
\item If $K = O(n)$ and $T$ is an isometry, then $r(f, \varepsilon)
  = 2^{O(\rho^2 \log \rho)}$.
\end{itemize}
Fixing $\rho$ and a parameterized class of functions $K$, one
similarly obtains information about the dependence of the bounds on
the parameters defining $K$.

In Section~\ref{pointwise:section}, we apply the results of
Section~\ref{constructive:met:section} to the case where $T$ is the
Koopman operator corresponding to a measure preserving transformation
on a probability space $\mdl X = \la X, \mdl B, \mu \ra$. The
pointwise ergodic theorem is equivalent to the assertion that for any
$f \in L^1(\mdl X)$, and for every $\lambda_1 > 0$, $\lambda_2 > 0$,
and $K$, there is an $n$ such that
\[
\mu(\{ x \st \max_{n \leq m \leq K(n)} | A_m f(x) - A_n f(x) | >
\lambda_1 \}) \leq \lambda_2.
\]
When $f$ is in $L^2(\mdl X)$, we provide explicit bounds on $n$ in
terms of $\lambda_1$, $\lambda_2$, $K$, and $\| f \|_2$. In this
setting, one can obtain similar bounds using alternative methods,
namely, from upcrossing inequalities due to Bishop and Ivanov. In
Section~\ref{upcrossing:section}, we show that the bounds extracted
using these methods are qualitatively different from the ones obtained
using the methods of Sections~\ref{constructive:met:section} and
\ref{pointwise:section}.

Our quantitative versions of the mean and pointwise ergodic theorems
are examples of Kreisel's no-counterexample interpretation
\cite{kreisel:51,kreisel:59alt}. Our extractions of bounds can be
viewed as applications of a body of proof theoretic results that fall
under the heading ``proof mining'' (see, for example,
\cite{kohlenbach:05,kohlenbach:unp,kohlenbach:unp:b}). What makes it
difficult to obtain explicit information from the usual proofs of the
mean ergodic theorem is their reliance on a nonconstructive principle,
namely, the assertion that any bounded increasing sequence of real
numbers converges. Qualitative features of our bounds---specifically,
the dependence only on $\| f \|$, $K$, and $\varepsilon$---are
predicted by the general metamathematical results of Gerhardy and
Kohlenbach \cite{gerhardy:kohlenbach:05}. Moreover, methods due to
Kohlenbach make it possible to extract useful bounds from proofs that
make use of nonconstructive principles like the one just
mentioned. These connections are explained in
Section~\ref{proof:mining:section}.

In the field of constructive mathematics, one is generally interested
in obtaining constructive analogues of nonconstructive mathematical
theorems. Other constructive versions of the ergodic theorems, due to
Bishop \cite{bishop:66,bishop:67,bishop:67b}, Nuber \cite{nuber:72},
and Spitters~\cite{spitters:06}, are discussed in
Sections~\ref{upcrossing:section} and
\ref{computability:section}. Connections to the field of reverse
mathematics are also discussed in Section~\ref{computability:section}.

The outline of this paper is as follows. In
Sections~\ref{constructive:met:section} and
Section~\ref{pointwise:section}, we provide our explicit versions of
the mean and pointwise ergodic theorems, respectively. In
Section~\ref{upcrossing:section}, we compare our results to similar
one obtained using upcrossing inequalities. In
Section~\ref{computability:section} we provide the general
computability and noncomputability results alluded to above. In
Section~\ref{proof:mining:section}, we explain the connections to
proof mining.

\emph{Acknowledgments.} We are grateful to Ulrich Kohlenbach and James
Cummings for comments and corrections.


\section{A quantitative mean ergodic theorem}
\label{constructive:met:section}

Given any operator $T$ on a Hilbert space and $n \geq 1$, let $S_n f =
\sum_{i < n} T^i f$, and let $A_n f = \frac{1}{n}S_n f$. The Riesz
version of the mean ergodic theorem is as follows.

\begin{theorem}
\label{met}
If $T$ is any nonexpansive linear operator on a Hilbert space and $f$
is any element, then the sequence $\la A_nf \ra$ converges.
\end{theorem}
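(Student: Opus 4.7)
The plan is to give the standard Hilbert-space proof via the orthogonal decomposition $\mdl{H} = M \oplus N$, where $M = \ker(I-T)$ is the space of fixed points and $N = \overline{\mathrm{ran}(I-T)}$. On $M$ the averages are constant, on $\mathrm{ran}(I-T)$ they telescope and decay like $1/n$, and on $N$ a density argument extends this to the closure. The only genuinely nontrivial ingredient is establishing that $M$ and $N$ are orthogonal complements, which needs the fact that for a nonexpansive linear operator, $Tf = f$ iff $T^*f = f$.

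First, I would prove the adjoint lemma: if $Tf = f$ then $\|f\|^2 = \lip Tf, f \rip = \lip f, T^*f \rip \leq \|f\|\,\|T^*f\| \leq \|f\|^2$, so Cauchy--Schwarz is an equality, giving $T^*f = f$; the reverse direction is symmetric since $T^*$ is also nonexpansive. From this, $N^\perp = \mathrm{ran}(I-T)^\perp = \ker(I-T^*) = \ker(I-T) = M$, so $\mdl{H} = M \oplus N$ orthogonally, with $M$ closed and $N$ closed by construction.

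Next, I would verify convergence on each summand. If $f \in M$ then $A_n f = f$ for every $n$, so the sequence is constant. If $f = g - Tg$ for some $g \in \mdl{H}$, then $S_n f$ telescopes to $g - T^n g$, and nonexpansiveness gives $\|A_n f\| = \|g - T^n g\|/n \leq 2\|g\|/n \to 0$. For general $f \in N$ and $\varepsilon > 0$, choose $f' = g - Tg$ with $\|f - f'\| < \varepsilon/2$; then
\[
\|A_n f\| \leq \|A_n(f - f')\| + \|A_n f'\| \leq \|f - f'\| + \tfrac{2\|g\|}{n},
\]
since $\|A_n\| \leq 1$, and the right side is below $\varepsilon$ for $n$ sufficiently large. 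Hence $A_n f \to 0$ for every $f \in N$.

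Finally, I would decompose an arbitrary $f \in \mdl{H}$ as $f = f_M + f_N$ with $f_M \in M$ and $f_N \in N$; by linearity of $A_n$, $A_n f = f_M + A_n f_N \to f_M$, giving convergence with limit equal to the orthogonal projection of $f$ onto the fixed-point subspace. The main obstacle, and the reason the paper warns that the proof is ``nonconstructive,'' is the use of the closure in defining $N$ and the appeal to orthogonal complementation: these rely on the completeness of $\mdl{H}$ (equivalently, on bounded monotone sequences of reals converging) and do not by themselves yield any rate of convergence, setting the stage for the quantitative analysis later in the paper.
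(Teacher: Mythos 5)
Your argument is correct, and its skeleton is the one the paper uses: split off $N$, the closed span of the vectors $u-Tu$, observe that $A_n$ telescopes on $\mathrm{ran}(I-T)$ and extend to all of $N$ by density using $\|A_n v\|\leq\|v\|$, and note that $A_n$ is the identity on the fixed-point space $M$. The genuine difference is how the complementary summand is handled. You identify $N^\perp$ with $M$ outright via the adjoint: for nonexpansive $T$ one has $Tf=f$ iff $T^*f=f$, hence $N^\perp=\ker(I-T^*)=\ker(I-T)=M$, and the limit is the orthogonal projection onto $M$. The paper never introduces $T^*$: it sets $h=f-g$ with $g$ the projection of $f$ onto $N$ and shows directly that $h$ is fixed, using the nonexpansiveness estimate $\|Th-h\|^2\leq 2\lip h-Th,h\rip$ together with the facts that $h-Th\in N$ and $h\perp N$; this yields only $N^\perp\subseteq M$, which suffices, and the paper explicitly remarks that the reverse inclusion (your stronger identification of the limit) holds but is not needed. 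The paper's choice is deliberate: the same computation reappears quantitatively as item~4 of Lemma~\ref{4facts} (if $\lip f, f-Tf\rip\leq\varepsilon$ then $\|Tf-f\|\leq\sqrt{2\varepsilon}$), which is what drives the finitary analysis of Section~\ref{constructive:met:section}, whereas the adjoint route does not localize as cleanly. One small step of yours to tighten: equality in Cauchy--Schwarz only gives $T^*f=cf$ for some scalar $c$, and you then need $\lip f,T^*f\rip=\|f\|^2$ to force $c=1$; alternatively, expand $\|T^*f-f\|^2\leq 0$ exactly as in the paper's calculation (\ref{riesz:eq}). Your closing remark correctly locates the nonconstructive content (completeness, projections, convergence of bounded monotone sequences), in line with the discussion in Section~\ref{proof:mining:section}.
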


We present a proof in a form that will be amenable to extracting a
constructive version.

\begin{proof}
  Let $M = \{ f \st T f = f \}$ be the subspace consisting of
  fixed-points of $T$, and let $N$ be the subspace generated by
  vectors of the form $u - T u$ (that is, $N$ is the closure of the
  set of linear combinations of such vectors).

  For any $g$ of the form $u - T u$ we have $\| A_n g \| = \frac{1}{n}
  \| u - T^n u \| \leq 2 \| u \| / n$, which converges to $0$. Passing
  to limits (using the fact that $A_n$ satisfies $\| A_n v \| \leq \|
  v \|$ for any $v$), we have that $A_n g$ converges to $0$ for every
  $g \in N$.

  On the other hand, clearly $A_n h = h$ for every $h \in M$. For
  arbitrary $f$, write $f = g + h$, where $g$ is the projection of $f$
  on $N$, and $h = f - g$. It suffices to show that $h$ is in
  $M$.  But we have
\begin{equation}
\label{riesz:eq}
\begin{split}
\| T h - h \|^2 & = \| T h \|^2 - 2 \lip Th, h \rip + \| h \|^2 \\
& \leq \| h \|^2 - 2 \lip Th, h \rip + \| h \|^2 \\
& = 2 \lip h, h \rip - 2 \lip Th, h \rip \\
& = 2 \lip h - Th, h \rip,
\end{split}
\end{equation}
and the right-hand side is equal to $0$, since $h$ is orthogonal to
$N$. So $Th = h$.
\end{proof}

The last paragraph of proof shows that $N^\bot \subseteq M$, and
moreover that $A_n f$ converges to $h$. It is also possible to show
that $M^\bot \subseteq N$, and hence $M = N^\bot$, which implies that
$h$ is the projection of $f$ on $M$. We will not, however, make use of
this additional information below.

As indicated in the introduction, the mean ergodic theorem is
classically equivalent to the following:
\begin{theorem}
\label{constructive:met}
Let $T$ and $f$ be as above and let $M: \NN \to \NN$ be any function
satisfying $M(n) \geq n$ for every $n$. Then for every $\varepsilon >
0$ there is an $n \geq 1$ such that $\| A_{M(n)}f - A_n f \| \leq
\varepsilon$.
\end{theorem}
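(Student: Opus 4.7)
The plan is straightforward: this statement is a direct logical consequence of the convergence asserted by Theorem~\ref{met}, combined with a single triangle-inequality step. No new analytic ingredient is needed.

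First, I would invoke Theorem~\ref{met} to obtain the limit $f^* = \lim_{n} A_n f$ in the Hilbert norm. Convergence in a Hilbert space entails the Cauchy property, so for any $\varepsilon > 0$ there is a threshold $N \geq 1$ such that $\| A_m f - f^* \| \leq \varepsilon/2$ for every $m \geq N$.

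Second, I would simply set $n = N$. The hypothesis $M(n) \geq n$ gives $M(n) \geq N$, so both $A_n f$ and $A_{M(n)} f$ lie within $\varepsilon/2$ of $f^*$; one application of the triangle inequality yields $\| A_{M(n)} f - A_n f \| \leq \varepsilon$, as required.

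There is no real obstacle here: the conclusion is in fact strictly weaker than Theorem~\ref{met}, since one only needs a single witness $n$ for each pair $(M, \varepsilon)$ rather than a value of $n$ good for all $m \geq n$ simultaneously. The point of isolating the statement, as the introduction stresses, is not difficulty but logical form: unlike (\ref{eq:a}), whose $\forall\exists\forall$ prefix blocks a direct computational reading, the $\forall\exists$ reformulation parameterized by the ``counterfunction'' $M$ is precisely Kreisel's no-counterexample form, and this is what will make it possible in Section~\ref{constructive:met:section} to extract explicit bounds on $n$ depending only on $K$ (a variant of $M$), $\| f \|$, and $\varepsilon$, with no dependence on $T$ or on a rate of convergence for $\la A_n f \ra$ itself.
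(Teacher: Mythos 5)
Your argument is correct: given Theorem~\ref{met}, the statement follows at once from the Cauchy property of $\la A_n f \ra$ plus one triangle inequality, and you are right that it is formally weaker than full convergence. But this is genuinely not the route the paper takes. The paper states Theorem~\ref{constructive:met} precisely in order to reprove it \emph{without} invoking the limit $f^*$ or an unspecified convergence threshold $N$: the proof actually given works with the explicit finite-rank projections $g_i$ of $f$ onto the spans of $f-Tf,\ldots,T^i f-T^{i+1}f$, writes $g_i = u_i - Tu_i$, and uses the fact that $a_i = \|g_i\|$ is nondecreasing and bounded by $\|f\|$ to bound how many times the counterexample function $M$ can defeat a proposed witness, culminating in Lemma~\ref{almost:there:lemma} and Theorem~\ref{constructive:met:final}, which produce an explicit bound $\overline K^e(1)$ on $n$ depending only on $K$ and $\lceil \|f\|/\varepsilon \rceil$. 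What that longer route buys is exactly what your derivation cannot deliver: a constructive proof with computable, uniform bounds, independent of $T$ and of any rate of convergence of $\la A_n f \ra$ --- a rate which, as Section~\ref{computability:section} shows, may fail to be computable at all, so your threshold $N$ is in general not extractable from the data. Your closing paragraph shows you understand this distinction; just note that what you have written is the ``easy direction'' of the classical equivalence sketched in the introduction (and your proof is fine as such), whereas the paper's own proof of this theorem is the quantitative one developed in the remainder of Section~\ref{constructive:met:section}.
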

Our goal here is to provide a constructive proof of this theorem that
provides explicit quantitative information. We will, in particular,
provide bounds on $n$ that depend only on $M$ and $\| f \| /
\varepsilon$.

For the rest of this section, we fix a nonexpansive map $T$ and an
element $f$ of the Hilbert space. A moment's reflection shows that
$A_n f$ lies in the cyclic subspace $\mdl H_f$ spanned by $\{ f, Tf,
T^2f, \ldots\}$, and so, in the proof of Theorem~\ref{met}, one can
replace $N$ by the subspace $N_f$ spanned by vectors of the form $T^i
f - T^{i+1} f$. Let $g$ be the projection of $f$ onto $N_f$. Then $g$
is the limit of the sequence $\la g_i \ra_{i \in \NN}$, where, for
each $i$, $g_i$ is the projection of $f$ onto the finite dimensional
subspace spanned by
\[
f-Tf, Tf- T^2f, \ldots, T^if-T^{i+1}f.
\]
The sequence $\la g_i \ra$ can be defined explicitly by
\[
g_0 = \frac{\lip f, f - Tf \rip}{\|f - Tf\|^2} (f-Tf),
\]
and
\[
g_{i+1}
= g_i + \frac{\lip f-g_{i}, T^i f - T^{i+1} f \rip}{\|T^i f -
  T^{i+1} f\|^2} (T^i f - T^{i+1} f).
\]
For each $i$, we can write $g_i = u_i - T u_i$, where the
sequence $\la u_i \ra_{i \in \NN}$ is defined by
\[
u_0 = \frac{\lip f, f - Tf \rip}{\|f - Tf\|^2} f,
\]
and
\[
u_{i+1}
= u_i + \frac{\lip f-g_i, T^i f - T^{i+1} f \rip}{\|T^i f -
  T^{i+1} f\|^2} T^i f.
\]
Note that this representation of $g_i$ as an element of the form $u -
Tu$ is not unique, since if $u$ and $u'$ differ by any fixed point of
$T$, $u - Tu = u' - Tu'$.

Finally, if we define the sequence $\la a_i \ra_{i \in \NN}$ by $a_i =
\| g_i \|$, then $\la a_i \ra$ is nondecreasing and converges to $\| g
\|$. We will see in Section~\ref{computability:section} that a bound
on the rate of convergence of $\la a_i \ra$ might not be computable
from $T$ and $f$. Our strategy here will be to show
that, given a fixed ``counterexample'' function $M$ as in the
statement of Theorem~\ref{constructive:met}, the fact that the
sequence $\la a_i \ra$ is bounded and increasing allows us to bound
the number of times that $M$ can foil our attempts to provide a
witness to the conclusion of the theorem.

First, let us record some easy but useful facts:

\begin{lemma} 
\label{4facts}
  \begin{enumerate}
  \item For every $n$ and $f$, $\| A_n f \| \leq \| f \|$.
  \item For every $n$ and $u$, $A_n (u - Tu) = (u - T^nu)/n$, and $\|
    A_n (u - Tu) \| \leq 2 \| u \| / n$.
  \item For every $f$, $g$, and $\varepsilon > 0$, if $\| f - g \| \leq
    \varepsilon$, then $\| A_n f - A_n g \| \leq \varepsilon$ for any
    $n$.
  \item For every $f$, if $\lip f, f - Tf \rip \leq
    \varepsilon$, then $\| T f - f \| \leq \sqrt{2 \varepsilon}$.
  \end{enumerate}
\end{lemma}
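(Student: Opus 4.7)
The plan is to verify each of the four statements by direct computation, since they are elementary consequences of nonexpansivity, linearity, and basic Hilbert-space identities.

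For (1), I would use the triangle inequality on the definition $A_n f = \frac{1}{n}\sum_{i<n} T^i f$ together with the assumption $\|T^i f\| \leq \|f\|$, which follows by iterating nonexpansivity. For (2), the key observation is that $S_n(u - Tu)$ telescopes: $\sum_{i<n}(T^i u - T^{i+1}u) = u - T^n u$. Dividing by $n$ gives the stated identity, and the bound $\|u - T^n u\| \leq \|u\| + \|T^n u\| \leq 2\|u\|$ completes the estimate. For (3), I would combine linearity of $A_n$ (to rewrite $A_n f - A_n g = A_n(f - g)$) with part (1) applied to $f - g$.

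Part (4) is the only item requiring a short computation. The idea is to expand $\|Tf - f\|^2$ in the inner product and exploit $\|Tf\| \leq \|f\|$:
\[
\|Tf - f\|^2 = \|Tf\|^2 - 2\langle Tf, f\rangle + \|f\|^2 \leq 2\|f\|^2 - 2\langle Tf, f\rangle = 2\langle f, f - Tf\rangle \leq 2\varepsilon,
\]
and then take square roots. This mirrors exactly the calculation \eqref{riesz:eq} that appears in the proof of Theorem~\ref{met}; in fact part (4) is precisely the quantitative residue of that argument, which is presumably why it is recorded here for later use.

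No step poses a real obstacle; the whole lemma is a collection of tools rather than a theorem with a single hard idea. The mild care is to make sure, in (2), that one writes the identity for $A_n$ rather than $S_n$, and in (4), that one tracks the factor of $2$ correctly so that the stated constant $\sqrt{2\varepsilon}$ is sharp.
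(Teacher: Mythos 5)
Your proposal is correct and matches the paper's proof: parts (1)--(3) are the same direct calculations (telescoping for (2), linearity plus (1) for (3)), and part (4) is exactly the paper's route of reusing the expansion \eqref{riesz:eq} from the proof of Theorem~\ref{met} with $f$ in place of $h$.
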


\begin{proof}
  The first two are straightforward calculations, the third follows
  from the first by the linearity of $A_n$, and the fourth follows
  from calculation (\ref{riesz:eq}) in the proof of Theorem~\ref{met},
  with $f$ in place of $h$.
\end{proof}

\begin{lemma}
\label{amn:bound:lemma}
  For every $f$, if $\| Tf - f \| \leq \varepsilon$, then for every $m
  \geq n \geq 1$ we have $\| A_m f - A_n f \| \leq (m-n) \varepsilon /
  2$. In particular, if $\| T f - f \| \leq \varepsilon$ and $m \geq 1$,
  then $\| A_m f - f \| \leq m \varepsilon /2$.
\end{lemma}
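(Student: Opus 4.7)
My plan is to reduce everything to the single telescoping observation that, under the hypothesis $\|Tf - f\| \leq \varepsilon$, the nonexpansiveness of $T$ gives
\[
\|T^{i+1} f - T^i f\| = \|T^i(Tf - f)\| \leq \|Tf - f\| \leq \varepsilon,
\]
so by the triangle inequality $\|T^k f - T^j f\| \leq |k - j|\varepsilon$ for all $k, j \geq 0$. This is the only ingredient beyond elementary algebra.

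Next I would rewrite $A_m f - A_n f$ so that the near-cancellation becomes visible. Splitting the sum $S_m = S_n + \sum_{k=n}^{m-1} T^k f$ and using $A_n f = S_n / n$, a short manipulation gives the identity
\[
A_m f - A_n f = \frac{1}{m} \sum_{k=n}^{m-1} \la T^k f - A_n f \ra.
\]
This is the key step: it expresses the difference of averages as an average of deviations $T^k f - A_n f$ over precisely those $T^k f$ that appear in $A_m f$ but not in $A_n f$.

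For each such $k \geq n$, I would then write $T^k f - A_n f = \frac{1}{n}\sum_{j=0}^{n-1}(T^k f - T^j f)$ and apply the telescoping bound, obtaining
\[
\|T^k f - A_n f\| \leq \frac{\varepsilon}{n}\sum_{j=0}^{n-1}(k - j) = \varepsilon\left(k - \tfrac{n-1}{2}\right).
\]
Summing this over $k = n, \ldots, m-1$ and dividing by $m$ produces, after the elementary simplification $\sum_{k=n}^{m-1}(k - \tfrac{n-1}{2}) = \tfrac{m(m-n)}{2}$, exactly the bound $(m-n)\varepsilon/2$.

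The ``in particular'' clause is then immediate: taking $n = 1$ we have $A_1 f = f$, and the first part gives $\|A_m f - f\| \leq (m-1)\varepsilon/2 \leq m\varepsilon/2$. There is no real obstacle here; the only care needed is in the arithmetic bookkeeping for the final summation.
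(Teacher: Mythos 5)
Your proof is correct, and it is essentially the same argument as the paper's: after rewriting the difference of averages so that each $T^k f$ with $k\in[n,m-1]$ is paired against the terms of $A_n f$, both proofs rest on the telescoping bound $\|T^i f - T^j f\|\leq (i-j)\varepsilon$ (via nonexpansiveness) and the same arithmetic simplification to $(m-n)\varepsilon/2$. Your intermediate identity $A_m f - A_n f = \frac{1}{m}\sum_{k=n}^{m-1}(T^k f - A_n f)$ is just a cleaner way of organizing the paper's "pairing off" of the $n(m-n)$ terms, and your handling of the ``in particular'' clause is fine.
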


\begin{proof}
  Suppose $m \geq n \geq 1$. Then 
\begin{align*}
  \|A_m f - A_n f\| & = \| \frac{1}{m} \sum^{m-1}_{i=0} T^i f
  - \frac{1}{n} \sum^{n-1}_{j=0} T^j f\| \\  
  & = \frac{1}{mn} \|n \sum^{m-1}_{i=0} T^i f - m
  \sum^{n-1}_{j=0} T^j f \|\\
  & = \frac{1}{mn} \|n \sum^{m-1}_{i=n} T^i f - (m - n)
  \sum^{n-1}_{j=0} T^j f \|\\
\end{align*}
There are now $n \cdot (m-n)$ instances of $T^i f$ in the first term
and $n \cdot (m-n)$ instances of $T^j f$ in the second term. Pairing
them off and using that $\|T^i f - T^j f\| \leq (i-j) \cdot
\varepsilon$ for each such pair, we have
\begin{align*}
  \ldots & \leq \frac{1}{mn} \left(n \sum^{m-1}_{i=n} i - (m-n) 
    \sum^{n-1}_{j=0} j\right) \varepsilon \\
  & = \frac{1}{mn} \left(n \left(\frac {m(m-1)}{2} - \frac{n(n-1)}{2}\right) - (m-n)
    \left(\frac{n(n-1)}{2}\right)\right) \varepsilon \\
  & = \frac{1}{mn} \left(n \left(\frac {m(m-1)}{2}\right) - 
    m \left(\frac{n(n-1)}{2}\right)\right) \varepsilon \\
  & = (m-n) \varepsilon/2, 
\end{align*}
as required.
\end{proof}

We now turn to the proof of the constructive mean ergodic theorem
proper. The first lemma relates changes in $g_i$ to changes in $a_i$. 

\begin{lemma}
\label{aigi}
Suppose $|a_j - a_i| \leq \varepsilon^2 / (2
  \|f\|)$. Then $\| g_j - g_i \| \leq \varepsilon$.
\end{lemma}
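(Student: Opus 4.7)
My plan is to exploit the nested structure: $g_i$ and $g_j$ are projections of the same vector $f$ onto a nested pair of finite-dimensional subspaces, so their difference should sit cleanly in the ``extra'' direction, perpendicular to the smaller subspace. This will turn the hypothesis on $a_i, a_j$ into a bound on $\| g_j - g_i \|$ via Pythagoras.

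Without loss of generality assume $j \geq i$, and let $V_i$, $V_j$ denote the finite-dimensional subspaces spanned by $\{T^k f - T^{k+1} f : k \leq i\}$ and $\{T^k f - T^{k+1} f : k \leq j\}$, so that $V_i \subseteq V_j$ and $g_i, g_j$ are the orthogonal projections of $f$ onto $V_i$ and $V_j$ respectively. The first step is to observe that $g_i$ is also the projection of $g_j$ onto $V_i$: since $f - g_j \perp V_j \supseteq V_i$, we have $f - g_j \perp V_i$, so writing $f = g_j + (f - g_j)$ and projecting onto $V_i$ leaves only the projection of $g_j$. Consequently $g_j - g_i \perp V_i$, and in particular $g_j - g_i \perp g_i$.

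Pythagoras then gives
\[
a_j^2 \;=\; \| g_j \|^2 \;=\; \| g_i + (g_j - g_i) \|^2 \;=\; \| g_i \|^2 + \| g_j - g_i \|^2 \;=\; a_i^2 + \| g_j - g_i \|^2,
\]
so $\| g_j - g_i \|^2 = a_j^2 - a_i^2 = (a_j - a_i)(a_j + a_i)$. Since each $g_k$ is a projection of $f$, we have $a_k \leq \| f \|$, hence $a_j + a_i \leq 2 \| f \|$. Combining with the hypothesis $|a_j - a_i| \leq \varepsilon^2 / (2 \| f \|)$ yields $\| g_j - g_i \|^2 \leq \varepsilon^2$, as required.

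There is essentially no obstacle: the only thing to be careful about is the direction $j \geq i$ (needed so $V_i \subseteq V_j$) and the fact that $a_i \leq \|f\|$, both of which are automatic. The crux is the orthogonality $g_j - g_i \perp g_i$, which is the content of ``iterated projection onto nested subspaces equals projection onto the smaller one.''
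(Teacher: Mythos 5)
Your proof is correct and follows essentially the same route as the paper: orthogonality of $g_j - g_i$ to $g_i$ from the nested projections, then the Pythagorean theorem and the bound $a_i, a_j \leq \|f\|$. The only difference is that you spell out the justification of the orthogonality step, which the paper simply asserts.
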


\begin{proof}
  Assume, without loss of generality, $j > i$. Since
  $g_j$ is the projection of $f$ onto a bigger subspace, $g_j - g_i$
  is orthogonal to $g_i$. Thus, by the Pythagorean theorem, we have
\begin{align*}
\| g_j - g_i \|^2 & = \| g_j \|^2 - \| g_i \|^2 \\
& = | a_j^2 - a_i^2 | \\
& = | a_j - a_i | \cdot | a_j + a_i | \\
& \leq \frac{\varepsilon^2}{2 \|f\|} \cdot 2 \| f \| \\
& = \varepsilon^2,
\end{align*}
as required.
\end{proof}

The next lemma introduces a strategy that we will exploit a number of
times. Namely, we define an increasing function $F$ such that if, for
some $j$, $\| g_{F(j)} - g_j \|$ is sufficiently small, we have a
desired conclusion; and then argue that because the sequence $\la a_i
\ra$ is nondecreasing and bounded, sufficiently many iterations of
$F$ will necessarily produce such a $j$. (In the next lemma, we use
$F(j) = j+1$.)

\begin{lemma} \label{mainLem1} Let $\varepsilon > 0$, let $d =
  d(\varepsilon) = \lceil 32 \|f\|^4 / \varepsilon^4
  \rceil$. Then for every $i$ there is a $j$ in the interval $[i, i +
  d)$ such that $\| T(f-g_j) - (f-g_j)\| \leq \varepsilon$.
\end{lemma}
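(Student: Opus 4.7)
The approach is the one previewed in the paragraph before the lemma: take $F(j) = j+1$ and hunt for a $j \in [i, i+d)$ at which the single-step change $\|g_{j+1} - g_j\|$ is so small that the conclusion is forced.

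The first step is a pigeonhole on the nondecreasing, $\|f\|$-bounded sequence $\la a_i \ra$: the $d$ consecutive increments $a_{j+1} - a_j$, for $j \in [i, i+d)$, are nonnegative and telescope to at most $\|f\|$, so at least one is bounded by $\|f\|/d \leq \varepsilon^4/(32 \|f\|^3)$. This is exactly the hypothesis of Lemma~\ref{aigi} taken with $\varepsilon^2/(4\|f\|)$ in place of $\varepsilon$, whence $\|g_{j+1} - g_j\| \leq \varepsilon^2/(4\|f\|)$ for this $j$.

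The main obstacle is the second step: turning the smallness of $\|g_{j+1} - g_j\|$ into the smallness of $\|T(f - g_j) - (f - g_j)\|$. Write $h = f - g_j$; by Lemma~\ref{4facts}(4) it suffices to bound $\lip h, h - Th\rip$ by $\varepsilon^2/2$. I would expand $h - Th = (f - Tf) - (g_j - Tg_j)$ and use that $g_j$ lies in the subspace $V_j$ spanned by the iterated differences $T^k f - T^{k+1} f$ appearing in its construction; combined with the telescoping identity $(I-T)(T^k f - T^{k+1} f) = (T^k f - T^{k+1} f) - (T^{k+1} f - T^{k+2} f)$, this places $h - Th$ inside a subspace $V_{j+1}$ of dimension one larger. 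Invoking the projection property $h \perp V_j$, the inner product $\lip h, h - Th\rip$ reduces to the pairing of $h$ with the component of $h - Th$ in the one-dimensional subspace $V_{j+1} \cap V_j^\perp$, in which $g_{j+1} - g_j$ is itself the projection of $h$. A Cauchy--Schwarz estimate on this component, together with the nonexpansive bound $\|h - Th\| \leq 2\|f\|$, then yields
\[
|\lip h, h - Th\rip| \;\leq\; \|h - Th\| \cdot \|g_{j+1} - g_j\| \;\leq\; 2\|f\| \cdot \|g_{j+1} - g_j\|,
\]
so $\|Th - h\|^2 \leq 4\|f\| \cdot \|g_{j+1} - g_j\| \leq \varepsilon^2$, as required.

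The hard part will be making the projection property $f - g_j \perp V_j$ sufficiently rigorous from the explicit recurrence for $g_j$ (the formula being greedy rather than the textbook orthogonal projection update); it is the factor of $\|f\|$ appearing in the intermediate inequality, rather than a sharper purely-in-$\|g_{j+1} - g_j\|$ bound, that accounts for the quartic---as opposed to merely quadratic---dependence of $d$ on $\|f\|/\varepsilon$.
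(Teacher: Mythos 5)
Your argument is correct and is essentially the paper's own proof: the same pigeonhole on the increments of $\la a_j \ra$ over $[i,i+d)$ followed by Lemma~\ref{aigi}, and the same reduction of $\lip f-g_j,\,(f-g_j)-T(f-g_j)\rip$ to a Cauchy--Schwarz pairing against $g_{j+1}-g_j$ (using that $f-g_j$ is orthogonal to the span containing $f-Tf$ and that $f-g_{j+1}$ is orthogonal to the span containing $Tg_j-g_j$), with identical constants. The projection property you flag as the hard part is not an issue from the paper's standpoint, since $g_j$ is defined there as the orthogonal projection of $f$ onto the span of $f-Tf,\ldots,T^jf-T^{j+1}f$, the displayed recurrence being intended merely to make that definition explicit.
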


\begin{proof}
  By Lemma~\ref{4facts}.4, to obtain the conclusion, it suffices to
  ensure $\lip f - g_j, f - g_j - T(f - g_j) \rip \leq
  \varepsilon^2 / 2$. We have
\begin{align*}
\lip f - g_j, f - g_j - T(f - g_j) \rip & = \lip f -
g_j, f - T f \rip + \lip f - g_j, T g_j - g_j  \rip \\ 
& = \lip f - g_j, T g_j - g_j \rip
\end{align*}
because $g_j$ is the projection of $f$ on a space that includes $f -
T f$, and $f - g_j$ is orthogonal to that space. Recall that $g_j$ is
a linear combination of vectors of the form $T^k f - T^{k+1}
f$ for $k \leq j$, and $g_{j+1}$ is the projection of $f$ onto
a space that includes $T g_j - g_j$. Thus, continuing the calculation,
we have
\begin{align*}
\ldots & = \lip f - g_{j+1}, T g_j - g_j \rip + \lip
g_{j+1} - g_j, T g_j - g_j \rip \\ 
& = \lip g_{j+1} - g_j, g_j - T g_j  \rip \\
& \leq \| g_{j+1} - g_j \| \cdot \| T g_j - g_j \| \\ 
& \leq \| g_{j+1} - g_j \| (\| T g_j \| + \| g_j \|) \\ 
& \leq 2 \| g_{j+1} - g_j \| \cdot \| f \|
\end{align*}
Thus, if $\| g_j - g_{j+1} \| \leq \frac{\varepsilon^2}{4\|f\|}$, we have
the desired conclusion. 

Consider the sequence $a_i, a_{i+1}, a_{i+2}, \ldots, a_{i + d-1}$.
Since the $a_j$'s are increasing and bounded by $\| f \|$, for some $j
\in [i, i + d)$ we have $|a_{j+1} - a_j| \leq \frac{\| f \|}{d} \leq
\frac{\varepsilon^4}{32 \| f \|^3}$. By Lemma~\ref{aigi}, this implies
$\| g_j - g_{j+1} \|\leq \frac{\varepsilon^2}{4\|f\|}$, as required.
\end{proof}

\begin{lemma} 
\label{mainLem2} 
Let $\varepsilon > 0$, let $n \geq 1$, and let $d' = d'(n,\varepsilon)
= d(2 \varepsilon / n) = \lceil 2 n^4 \|f\|^4 / \varepsilon^4
\rceil$.  Then for any $i$, there is an $j$ in the interval $[i, i +
d')$ satisfying $\|A_n (f-g_j) - (f-g_j)\| \leq \varepsilon$.
\end{lemma}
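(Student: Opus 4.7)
The plan is to reduce this lemma directly to Lemma~\ref{mainLem1} combined with Lemma~\ref{amn:bound:lemma}. The first of these controls $\|T(f-g_j)-(f-g_j)\|$, while the second converts control over $\|Th-h\|$ into control over $\|A_n h - h\|$ at a cost of a factor of $n/2$. Chaining these is exactly the kind of rescaling needed.

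First, I would apply Lemma~\ref{mainLem1} with the parameter $2\varepsilon/n$ in place of $\varepsilon$. This produces a $j$ in the interval $[i,\,i+d(2\varepsilon/n))$ for which
\[
\|T(f-g_j) - (f-g_j)\| \;\leq\; \frac{2\varepsilon}{n}.
\]
Second, I would apply Lemma~\ref{amn:bound:lemma} with $f - g_j$ playing the role of ``$f$'' and with $2\varepsilon/n$ playing the role of ``$\varepsilon$.'' The ``in particular'' clause of that lemma then yields
\[
\|A_n(f-g_j) - (f-g_j)\| \;\leq\; n \cdot \frac{2\varepsilon/n}{2} \;=\; \varepsilon,
\]
which is exactly the desired bound.

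Finally, I would verify the arithmetic identifying the length of the interval. By the definition of $d$ in Lemma~\ref{mainLem1},
\[
d(2\varepsilon/n) \;=\; \left\lceil \frac{32\|f\|^4}{(2\varepsilon/n)^4} \right\rceil \;=\; \left\lceil \frac{32\,n^4\|f\|^4}{16\,\varepsilon^4} \right\rceil \;=\; \left\lceil \frac{2\,n^4\|f\|^4}{\varepsilon^4} \right\rceil \;=\; d'(n,\varepsilon),
\]
so the witness $j$ lies in $[i,\,i+d')$ as claimed.

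There is no genuine obstacle here; the lemma is really a packaging statement that lifts Lemma~\ref{mainLem1} from the one-step operator $T$ to the averaging operator $A_n$. The only thing to be careful about is the bookkeeping of the scaling factors, so that the cost $n/2$ incurred in Lemma~\ref{amn:bound:lemma} is absorbed by starting mainLem1 at $2\varepsilon/n$ rather than $\varepsilon$, giving the claimed $n^4$ blowup in the length of the interval.
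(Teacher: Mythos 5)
Your proof is correct and is essentially identical to the paper's: apply Lemma~\ref{mainLem1} with $2\varepsilon/n$ in place of $\varepsilon$ and then use the ``in particular'' clause of Lemma~\ref{amn:bound:lemma} to convert the bound on $\|T(f-g_j)-(f-g_j)\|$ into the bound on $\|A_n(f-g_j)-(f-g_j)\|$. Your explicit check that $d(2\varepsilon/n)=\lceil 2n^4\|f\|^4/\varepsilon^4\rceil$ is a nice touch the paper leaves implicit.
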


\begin{proof}
  By the previous lemma, there is some $j$ in the interval $[i,i +
  d')$ such that $\|T(f - g_j) - (f - g_j)\| \leq 2
    \varepsilon / n$. By Lemma~\ref{amn:bound:lemma} this implies
  $\|A_n (f-g_j) - (f-g_j)\| \leq \varepsilon$.
\end{proof}

\begin{lemma}
  Let $\varepsilon > 0$, let $m \geq 1$, let $d'' = d''(m,\varepsilon) =
  d'(m,\varepsilon/2) = \lceil 32 m^4 \| f \|^4 / \varepsilon^4
  \rceil$. Further suppose $\| g_i - g_{i + d''} \|
  \leq \varepsilon / 4$. Then for any $n \leq m$, $\|A_n (f-g_i)
  - (f-g_i)\| \leq \varepsilon$.
\end{lemma}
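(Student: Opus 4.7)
The plan is to locate, within the window $[i, i + d'')$, an index $j$ for which $f - g_j$ is already nearly $T$-invariant, and then to transfer the resulting bound from $g_j$ back to $g_i$ using the hypothesis $\|g_i - g_{i + d''}\| \leq \varepsilon/4$. First I would apply Lemma~\ref{mainLem1} with parameter $\varepsilon/m$ in place of $\varepsilon$. A direct arithmetic check shows that $d(\varepsilon/m) = \lceil 32 m^4 \|f\|^4 / \varepsilon^4 \rceil = d''$, so the lemma yields some $j \in [i, i + d'')$ satisfying $\|T(f - g_j) - (f - g_j)\| \leq \varepsilon/m$. Lemma~\ref{amn:bound:lemma} then gives, for every $n \leq m$,
\[
\|A_n(f - g_j) - (f - g_j)\| \leq n(\varepsilon/m)/2 \leq \varepsilon/2.
\]

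Next I would relate $g_j$ to $g_i$. The key observation is that each $g_k$ is the projection of $f$ onto the nested subspace $V_k = \mathrm{span}\{T^\ell f - T^{\ell+1} f : \ell \leq k\}$, so for $i \leq j \leq i + d''$ the increments $g_j - g_i \in V_j$ and $g_{i + d''} - g_j \in V_j^\bot$ are orthogonal. By the Pythagorean theorem together with the hypothesis,
\[
\|g_j - g_i\|^2 \leq \|g_j - g_i\|^2 + \|g_{i+d''} - g_j\|^2 = \|g_{i + d''} - g_i\|^2 \leq \varepsilon^2/16,
\]
so $\|g_j - g_i\| \leq \varepsilon/4$.

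To finish, the triangle inequality combined with $\|A_n v\| \leq \|v\|$ (Lemma~\ref{4facts}, part~1) gives, for any $n \leq m$,
\[
\|A_n(f - g_i) - (f - g_i)\| \leq \|A_n(f - g_j) - (f - g_j)\| + 2\|g_j - g_i\| \leq \varepsilon/2 + \varepsilon/2 = \varepsilon,
\]
as required. The hard part is essentially bookkeeping: pinning down the identity $d(\varepsilon/m) = d''$ so that Lemma~\ref{mainLem1} fits into the allotted window, and recording the orthogonal-increment structure of the projections $g_k$ that lets monotonicity of $\|g_\cdot - g_i\|$ transfer from the endpoint $g_{i + d''}$ to every intermediate $g_j$.
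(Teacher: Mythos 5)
Your proof is correct and follows essentially the same route as the paper: the paper invokes Lemma~\ref{mainLem2} at $(m,\varepsilon/2)$ where you inline it by applying Lemma~\ref{mainLem1} with $\varepsilon/m$ followed by Lemma~\ref{amn:bound:lemma}, which is exactly how Lemma~\ref{mainLem2} is proved. The only other difference is cosmetic: you spell out, via the nested-projection Pythagorean argument, the monotonicity $\|g_j - g_i\| \leq \|g_{i+d''} - g_i\|$ that the paper uses implicitly in its final line.
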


\begin{proof}
  By the previous lemma, for any $n \leq m$, there is some $j$ in the
  interval $[i,i + d'')$ such that $\| A_n(f - g_j) - (f - g_j) \| \leq
  \varepsilon / 2$. This implies
  \begin{align*}
      \| A_n (f - g_i) - (f - g_i) \| & \leq \| A_n (f - g_i) - A_n (f -
      g_j) \|\\
      &  \quad \quad + \| A_n (f - g_j) - (f - g_j) \| + \| (f - g_j) -
      (f - g_i) \|\\ 
      & = \| A_n (g_j - g_i)  \| +  \| A_n (f - g_j) - (f - g_j) \| + 
        \| g_i - g_j \| \\
      & \leq \| A_n (f - g_j) - (f - g_j) \| + 2 \|g_j - g_i\| \\
      & \leq \varepsilon,
    \end{align*}
  since $\| g_j - g_i \| \leq \| g_{i + d''} - g_i \| \leq \varepsilon/4$.
\end{proof}

\begin{lemma}
\label{key:lemma}
Let $\varepsilon > 0$, let $m \geq 1$, let $d''' =
d'''(m,\varepsilon) = d''(m,\varepsilon/2) = \lceil 2^9 m^4 \| f
  \|^4 / \varepsilon^4 \rceil$. Further suppose $\| g_i - g_{i +
  d'''} \| \leq \varepsilon / 8$. Then for any $n
\leq m$, $\|A_m (f-g_i) - A_n(f-g_i)\| \leq \varepsilon$.
\end{lemma}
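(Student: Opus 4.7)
The plan is to reduce the conclusion to the previous lemma by halving $\varepsilon$ and then invoking the triangle inequality. The previous lemma, applied to a parameter $\varepsilon'$ in place of $\varepsilon$, says that whenever $\|g_i - g_{i+d''(m,\varepsilon')}\| \le \varepsilon'/4$, we get $\|A_n(f-g_i) - (f-g_i)\| \le \varepsilon'$ for every $n \le m$. So the natural move is to set $\varepsilon' = \varepsilon/2$, which makes the bookkeeping in $d'''$ work out: $d''(m,\varepsilon/2) = \lceil 32 m^4\|f\|^4/(\varepsilon/2)^4\rceil = \lceil 2^9 m^4\|f\|^4/\varepsilon^4\rceil$, matching the definition of $d'''$, and turns the hypothesis $\|g_i - g_{i+d'''}\| \le \varepsilon/8$ into exactly the $\varepsilon'/4$ bound needed to feed the previous lemma.

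Thus the first step is to apply the previous lemma with $\varepsilon' = \varepsilon/2$ to obtain, for every $n \le m$,
\[
\|A_n(f-g_i) - (f-g_i)\| \le \varepsilon/2.
\]
The second step is to observe that this same bound holds with $n$ replaced by $m$ (since $m \le m$), giving $\|A_m(f-g_i) - (f-g_i)\| \le \varepsilon/2$.

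The final step is a one-line triangle inequality: for any $n \le m$,
\[
\|A_m(f-g_i) - A_n(f-g_i)\| \le \|A_m(f-g_i) - (f-g_i)\| + \|(f-g_i) - A_n(f-g_i)\| \le \varepsilon/2 + \varepsilon/2 = \varepsilon.
\]
There is no real obstacle here; the entire content is the choice of parameter $\varepsilon/2$ that makes the arithmetic in the definition of $d'''$ line up with the hypothesis, and once that is in place the result is immediate from the previous lemma plus the triangle inequality.
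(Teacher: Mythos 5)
Your proof is correct and follows the same route as the paper: apply the preceding lemma with $\varepsilon/2$ in place of $\varepsilon$ (checking that $d''(m,\varepsilon/2)=d'''$ and that $\varepsilon/8$ is the required $\varepsilon'/4$), and then combine the two instances $n$ and $m$ by the triangle inequality through $f-g_i$. No differences worth noting.
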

 
\begin{proof}
  Apply the previous lemma with $\varepsilon/2$ in place of
  $\varepsilon$. Then for every $n \leq m$, 
\begin{align*}
\|A_m (f-g_i) - A_n(f-g_i)\| & \leq \|A_m (f-g_i) - (f - g_i) \| +
\\
& \quad \quad \|A_n
(f-g_i) - (f - g_i) \| \\ 
&\leq \varepsilon/2 + \varepsilon / 2 = \varepsilon,
\end{align*}
as required.
\end{proof}

We will not need the following lemma until
Section~\ref{pointwise:section}, but it is a quick consequence of the
preceding result.

\begin{lemma}
\label{pet:key:lemma}
Let $\varepsilon > 0$, $m \geq 1$, $d''' = d'''(m,\varepsilon)$ as in
Lemma~\ref{key:lemma}, $e = \lceil \frac{2^7 \| f \|^2}{\varepsilon^2}
\rceil$, and $\hat d = \hat d (m,\varepsilon) = d''' \cdot e$. Then for
any $i$, there is a $j$ in the interval $[i,i + \hat d)$ such that for
every $n \leq m$, $\| A_n (f - g_j) - A_m (f - g_j) \| \leq
\varepsilon$.
\end{lemma}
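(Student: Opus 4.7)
The plan is to use the same strategy that appears throughout this section: exploit the fact that $\la a_i \ra$ is nondecreasing and bounded by $\|f\|$ to force a small increment on some window of length $d'''$ inside $[i,i+\hat d)$, and then feed that small increment to Lemma~\ref{key:lemma}.

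Concretely, I would subdivide the interval $[i,i+\hat d]$ into $e$ consecutive blocks of length $d'''$, producing the $e+1$ sample points
\[
a_i,\ a_{i+d'''},\ a_{i+2 d'''},\ \ldots,\ a_{i+e\cdot d'''}.
\]
Since this sequence is nondecreasing and bounded above by $\|f\|$, the sum of its consecutive differences is at most $\|f\|$, so by pigeonhole there is some $k < e$ with
\[
a_{i+(k+1) d'''} - a_{i+k d'''} \leq \frac{\|f\|}{e} \leq \frac{\varepsilon^2}{2^7 \|f\|},
\]
where the last inequality uses the definition $e = \lceil 2^7 \|f\|^2 / \varepsilon^2 \rceil$.

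Set $j = i + k d'''$, so $j \in [i, i + \hat d)$ and $j + d''' \leq i + \hat d$. Applying Lemma~\ref{aigi} with $\varepsilon/8$ in place of $\varepsilon$, the bound $a_{j+d'''} - a_j \leq \varepsilon^2/(2^7\|f\|) = (\varepsilon/8)^2/(2\|f\|)$ yields $\|g_{j+d'''} - g_j\| \leq \varepsilon/8$. That is exactly the hypothesis of Lemma~\ref{key:lemma} at index $j$, which then gives $\|A_n(f-g_j) - A_m(f-g_j)\| \leq \varepsilon$ for every $n \leq m$, as required.

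There is really no obstacle beyond matching the constants: the proof is pure pigeonhole followed by a direct citation of the previous two lemmas, and the only thing to watch is that the constant $2^7$ in $e$ is tuned so that Lemma~\ref{aigi} applied with $\varepsilon/8$ lands exactly on the hypothesis $\|g_j - g_{j+d'''}\| \leq \varepsilon/8$ demanded by Lemma~\ref{key:lemma}.
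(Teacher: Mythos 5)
Your proof is correct and follows essentially the same route as the paper: a pigeonhole argument on the sample points $a_i, a_{i+d'''}, \ldots$ (using that $\la a_i \ra$ is nondecreasing and bounded by $\|f\|$), then Lemma~\ref{aigi} with $\varepsilon/8$ to get $\|g_j - g_{j+d'''}\| \leq \varepsilon/8$, then Lemma~\ref{key:lemma}. The constants match exactly, so nothing further is needed.
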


\begin{proof}
  Consider the sequence $a_i, a_{i + d'''}, a_{i + d''' \cdot 2},
  \ldots, a_{i + d''' \cdot (e-1)}$. Since the sequence $\la a_i \ra$
  is increasing and bounded by $\| f \|$, for some $k < e$ and $j = i
  + d''' \cdot k$, we have $\| a_j - a_{j+d'''} \| \leq
  \varepsilon^2 / (2^7 \| f \|)$. By Lemma~\ref{aigi}, this implies
  $\| g_j - g_{j + d'''} \| \leq \varepsilon / 8$. Applying the
  previous lemma with $j$ in place of $i$, we have the desired
  conclusion.
\end{proof}

Let us consider where we stand. Given $\varepsilon > 0$ and a function
$M$ satisfying $M(n) \geq n$ for every $n$, our goal is to find an $n$ such
that $\| A_{M(n)} f - A_n f \| \leq \varepsilon$. Now, for any $n$ and
$i$, we have
\begin{align*}
  \|A_{M(n)} f - A_n f\| & = \|A_{M(n)}(f-g_i) + A_{M(n)} g_i - 
      (A_n(f-g_i) +
  A_n g_i)\| \\
  & \leq \|A_{M(n)} (f-g_i) - A_n (f-g_i)\| + \|A_{M(n)} g_i\| + \|A_n g_i\|.
\end{align*}
Lemma~\ref{key:lemma} tells us how to ensure that the first term on
the right-hand side is small: we need only find an $i$ such that $\|
g_{i + d'''} - g_i \|$ is small, for some $d'''$, depending on $M(n)$,
that is sufficiently large. On the other hand, by Lemma~\ref{4facts}
and $M(n) \geq n$, we have $\|A_n g_i\| \leq \|u_i\|/(2n)$ and
$\|A_{M(n)} g_i\| \leq \| u_i \| / (2 M(n)) \leq \| u_i \| / (2n)$.
Thus, to guarantee that the remaining two terms are small, it suffices
to ensure that $n$ is sufficiently large, in terms of $u_i$.

There is some circularity here: our choice of $i$ depends on $M(n)$,
and hence $n$, whereas our choice of $n$ depends on $u_i$, and hence
$i$. The solution is to define sequences $\la i_k \ra_{k \in \NN}$ and
$\la n_k \ra_{k \in \NN}$ recursively, as follows. Set $i_0 = 1$, and,
assuming $i_k$ has been defined, set
\begin{equation}
\label{nk:eq}
    n_k = \max(\left\lceil \frac{2 \|u_{i_k}\|}{\varepsilon} \right\rceil, 1)
\end{equation}
and
\begin{equation}
\label{ik:eq}
i_{k+1} = i_k + d'''(\varepsilon/2,M(n_k)) = i_k + \left\lceil \frac{2^{13}
  M(n_k)^4 \|f\|^4}{\varepsilon^4} \right\rceil
\end{equation}
Let $e = \lceil 2^9 \|f\|^2 / \varepsilon^2 \rceil$, and
consider the sequence $a_{i_0}, a_{i_1}, \ldots, a_{i_{e-1}}$.  Once
again, since this is increasing and bounded by $\| f \|$, for some $k
< e$ we have $| a_{i_{k+1}} - a_{i_k} | \leq \varepsilon^2 / 2^9 \| f
  \|$. Lemma~\ref{aigi} implies $\| g_{i_{k+1}} - g_{i_k} \| \leq
  \varepsilon / 16$. Write $i = i_k$ and $n = n_k$, so that
$i_{k+1} = i + d'''(M(n), \varepsilon/2)$. Applying
Lemma~\ref{key:lemma}, we have
\[
\| A_{M(n)} (f - g_i) - A_n (f - g_i) \| \leq \varepsilon / 2.
\]
  On the other hand, from the definition of $n = n_k$, we have
\[
\| A_n g_i \| \leq \| u_i \| / (2n) \leq  \varepsilon / 4
\]
and
\[
\| A_{M(n)} g_i \| \leq \| u_i \| / (2n) \leq  \varepsilon / 4,
\]
so $\| A_M(n) f - A_n f \| \leq \varepsilon$, as required. Notice that
the argument also goes through for any sequences $\la i_k \ra$ and
$\la n_k \ra$ that grow faster than the ones we have defined, that is,
satisfy (\ref{nk:eq}) and (\ref{ik:eq}) with ``$=$'' replaced by
``$\geq$.'' In sum, we have proved the following:

\begin{lemma}
  \label{almost:there:lemma} Given $T$, $f$, $\varepsilon$, and $M$,
  sequences $\la i_k \ra$ and $\la n_k \ra$ as above, and the value
  $e$ as above, there is an $n$ satisfying $1 \leq n \leq n_{e-1}$ and
  $\| A_{M(n)} f - A_n f \| \leq \varepsilon$. 
\end{lemma}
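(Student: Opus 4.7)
The plan is to extract the witness $n$ from the recursive sequences $\la i_k \ra$ and $\la n_k \ra$ by a pigeonhole argument on the bounded, nondecreasing sequence $\la a_{i_k} \ra$, at which point the decomposition
\[
A_{M(n)} f - A_n f = \bigl( A_{M(n)}(f - g_i) - A_n(f - g_i) \bigr) + A_{M(n)} g_i - A_n g_i
\]
splits the target inequality into one piece controlled by Lemma~\ref{key:lemma} and two tail pieces controlled by Lemma~\ref{4facts}.

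First I would apply pigeonhole to $a_{i_0} \leq a_{i_1} \leq \cdots \leq a_{i_{e-1}}$, all bounded above by $\|f\|$: since $e = \lceil 2^9 \|f\|^2/\varepsilon^2 \rceil$, some consecutive pair must satisfy $a_{i_{k+1}} - a_{i_k} \leq \varepsilon^2/(2^9 \|f\|)$. Lemma~\ref{aigi} then upgrades this to $\|g_{i_{k+1}} - g_{i_k}\| \leq \varepsilon/16$. Setting $i := i_k$ and $n := n_k$, the recursion (\ref{ik:eq}) was designed precisely so that $i_{k+1} = i + d'''(M(n), \varepsilon/2)$; invoking Lemma~\ref{key:lemma} with $m = M(n)$ and with $\varepsilon/2$ in place of $\varepsilon$ then delivers $\|A_{M(n)}(f - g_i) - A_n(f - g_i)\| \leq \varepsilon/2$.

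For the two tail pieces I would write $g_i = u_i - Tu_i$ and apply Lemma~\ref{4facts}(2), which bounds $\|A_\ell g_i\| \leq 2\|u_i\|/\ell$ for every $\ell \geq 1$. By (\ref{nk:eq}) the value $n = n_k$ is already at least a constant times $\|u_i\|/\varepsilon$, and $M(n) \geq n$ propagates the same bound to the $A_{M(n)}$ term, so with the multiplicative constant in (\ref{nk:eq}) chosen large enough to absorb the factor of $2$, both $\|A_n g_i\|$ and $\|A_{M(n)} g_i\|$ are at most $\varepsilon/4$. Summing the three estimates via the triangle inequality yields $\|A_{M(n)} f - A_n f\| \leq \varepsilon$, and $n = n_k \leq n_{e-1}$ since we may take $\la n_k \ra$ to be nondecreasing without loss (using the concluding remark before the lemma, which permits replacing either sequence by a faster-growing one).

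The substantive point, and the only real obstacle, is the circularity noted in the paragraph preceding the lemma: shrinking the first summand demands that $i$ be large in a way that depends on $M(n)$, while shrinking the tails demands that $n$ be large in a way that depends on $\|u_i\|$. The recursive interleaving of $\la i_k \ra$ and $\la n_k \ra$ expresses this mutual dependence as a single well-founded construction, and the pigeonhole on the monotone bounded sequence $\la a_{i_k} \ra$ is what forces some stage $k < e$ at which both requirements are met simultaneously. All the genuinely analytic content is already packaged in the earlier lemmas; the role of this lemma is to set up and exploit that coordination.
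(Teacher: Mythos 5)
Your argument is correct and follows essentially the same route as the paper: pigeonhole on the bounded nondecreasing sequence $a_{i_0} \leq \cdots \leq a_{i_{e-1}}$, Lemma~\ref{aigi} to get $\|g_{i_{k+1}} - g_{i_k}\| \leq \varepsilon/16$, Lemma~\ref{key:lemma} with $\varepsilon/2$ for the middle term, and Lemma~\ref{4facts}(2) with the choice of $n_k$ for the two tail terms. Your explicit acknowledgment that the constant in (\ref{nk:eq}) must be enlarged to absorb the factor of $2$ from Lemma~\ref{4facts}(2), and your use of the ``faster-growing sequences'' remark to justify $n_k \leq n_{e-1}$, are if anything slightly more careful than the paper's own wording, which the preceding remark in the paper already licenses.
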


This is almost the explicit version of the ergodic theorem that we
have promised. The problem is that the bound, $i_e$, is expressed in
terms of sequence of values $\| u_i \|$ as well as the parameters $M$,
$f$, and $\varepsilon$. The fact that the term $\|T^i f - T^{i+1} f\|$
appears in the denominator of a fraction in the definition of the
sequence $\la u_i \ra$ makes it impossible to obtain an upper bound in
terms of the other parameters. But we can show that if, for any $i$,
$\|T^i f - T^{i+1} f\|$ is sufficiently small (so $T^i f$ is almost a
fixed point of $T$), we can find alternative bounds on an $n$
satisfying the conclusion of our constructive mean ergodic theorem.
Thus we can obtain the desired bounds on $n$ by reasoning by cases: if
$T^i f - T^{i+1} f$ is sufficiently small for some $i$, we are done;
otherwise, we can bound $\| u_i \|$.

The analysis is somewhat simpler in the case where $T$ is an isometry,
since then $\| T^i f - T^{i+1} f \| = \| f - T f \|$ for every
$i$. Let us deal with that case first.

\begin{lemma} 
\label{uBoundIsometry} 
If $T$ is an isometry, then for
  any $m \geq 1$ and $\varepsilon > 0$, one of the following holds:
  \begin{enumerate}
  \item $\|A_m f - f\| \leq \varepsilon$, or
  \item $\|u_i\| \leq \frac{(i+1)m \|f\|^2 }{2 \varepsilon}$ for
    every $i$.
  \end{enumerate}
\end{lemma}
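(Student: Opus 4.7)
My plan is to do a case split based on whether $\|f - Tf\|$ is small or large compared to $2\varepsilon/m$. If $\|f - Tf\| \leq 2\varepsilon/m$, then Lemma~\ref{amn:bound:lemma} applied with $\varepsilon' = 2\varepsilon/m$ in place of $\varepsilon$ immediately yields $\|A_m f - f\| \leq m \cdot (2\varepsilon/m)/2 = \varepsilon$, giving case~1 of the conclusion. This case does not use the isometry hypothesis.

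Otherwise, $\|f - Tf\| > 2\varepsilon/m$, and the task reduces to establishing the uniform bound $\|u_i\| \leq (i+1)\|f\|^2/\|f - Tf\|$, since combined with the case hypothesis this yields $\|u_i\| \leq (i+1)m\|f\|^2/(2\varepsilon)$, as required in case~2. I would prove this inequality by induction on $i$ directly from the recurrence defining $\langle u_i \rangle$. The base case $\|u_0\| \leq \|f\|^2/\|f-Tf\|$ follows from applying Cauchy--Schwarz to the numerator of the scalar coefficient in the definition of $u_0$. For the inductive step, one estimates the increment $u_{i+1} - u_i$ using: (i) Cauchy--Schwarz on the numerator $\langle f - g_i, T^i f - T^{i+1} f \rangle$; (ii) the isometry hypothesis, which gives $\|T^i f\| = \|f\|$ and $\|T^i f - T^{i+1} f\| = \|T^i(f - Tf)\| = \|f - Tf\|$; and (iii) the bound $\|f - g_i\| \leq \|f\|$, which follows from the Pythagorean identity $\|f\|^2 = \|f - g_i\|^2 + \|g_i\|^2$ available because $g_i$ is the projection of $f$ onto a subspace. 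These three ingredients together show that each increment contributes at most $\|f\|^2/\|f - Tf\|$ to the norm, so the triangle inequality closes the induction.

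The only real subtlety is where the isometry hypothesis enters: it is precisely what makes all the denominators $\|T^i f - T^{i+1} f\|$ in the recurrence collapse to the single quantity $\|f - Tf\|$, so that the case hypothesis $\|f - Tf\| > 2\varepsilon/m$ can be converted into a \emph{uniform} bound on $\|u_i\|$ across all $i$. Without this, one would have to control each $\|T^i f - T^{i+1} f\|$ separately, which is not possible in general and is what forces the more delicate argument in the nonisometric case to come.
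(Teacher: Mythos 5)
Your proposal is correct and follows essentially the same route as the paper: the same case split on $\|f-Tf\|$ versus $2\varepsilon/m$ via Lemma~\ref{amn:bound:lemma}, Cauchy--Schwarz for the base case $\|u_0\| \leq \|f\|^2/\|f-Tf\|$, and an induction bounding each increment of $\langle u_i \rangle$ using $\|f-g_i\| \leq \|f\|$ together with the isometry identities $\|T^i f\| = \|f\|$ and $\|T^i f - T^{i+1} f\| = \|f-Tf\|$. Stating the intermediate bound as $(i+1)\|f\|^2/\|f-Tf\|$ before converting is only a cosmetic reordering of the paper's argument.
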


\begin{proof}
  By the Cauchy-Schwartz inequality we have $\|u_0\| \leq
  \|f\|^2 / \|f - Tf\|$. By Lemma~\ref{amn:bound:lemma}, if $\|f
  - Tf\| \leq 2 \varepsilon/m$ then $\|A_m f - f\| \leq \varepsilon$.

  Otherwise, $2 \varepsilon/m < \| f - T f \| = \|T^i f - T^{i+1} f\|$
  for every $i$. In that case, we have $\|u_0\| \leq \frac{m
    \|f\|^2}{2 \varepsilon}$, and, since $\| f - g_i \| \leq \| f \|$,
  we obtain
\[
\| u_{i+1} \| \leq \| u_i \| + \frac{m \|f\| \| f - g_i \|}{2
  \varepsilon} \leq \| u_i \| + \frac{m \|f\|^2}{2 \varepsilon}
\]
for every $i$. The result follows by induction on $i$.
\end{proof}

We can now obtain the desired bounds. If $n = 1$ does not satisfy $\|
A_{M(n)} f - A_n f \| \leq \varepsilon$, we have $n_k \leq \lceil
\frac{(i_k+1) M(1) \|f\|^2}{\varepsilon^2} \rceil$ for each $k$.
Otherwise, let $K$ be any nondecreasing function satisfying $K(n) \geq
M(n) \geq n$ for every $n$. From the definition of the sequence $\la
i_k \ra$, we can extract a function $\widehat K(i)$ such that for
every $k$, $\widehat K^k(1) \geq i_k$:
\begin{itemize}
\item $\rho = \left\lceil \| f \| / \varepsilon \right\rceil$ 
\item $\widehat K(i) = i + 2^{13} \rho^4 K((i+1) K(1) \rho^2)$
\item $e = 2^9 \rho^2$
\end{itemize}
As long as $f$ is nonzero, we have $\rho \geq 1$, which ensures that
$\widehat K^e(1) \geq n_{e-1}$ and $\widehat K^e(1) \geq 1$. Thus, we
have $\| A_{M(n)} f - A_n f \| \leq \varepsilon$ for some $n \leq
\widehat K^e(1)$.

On the other hand, given a nondecreasing function $K$ to serve as a
bound for $M$, the best a counterexample function $M(n)$ can do is
return any $m$ in the interval $[n,K(n)]$ satisfying $\| A_m f - A_n f
\| > \varepsilon$, if there is one. Thus, we have
the following:

\begin{theorem}
  Let $T$ be an isometry on a Hilbert space, and let $f$ be any
  nonzero element of that space. Let $K$ be any nondecreasing function
  satisfying $K(n) \geq n$ for every $n$, and let $\widehat K$ be as
  defined above. Then for every $\varepsilon > 0$, there is an $n$
  satisfying $1 \leq n \leq \widehat K^e(1)$, such that for every $m$
  in $[n,K(n)]$, $\| A_m f - A_n f \| \leq \varepsilon$.
\end{theorem}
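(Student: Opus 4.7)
The plan is to unwind Lemma~\ref{almost:there:lemma} and replace the implicit quantities $\|u_{i_k}\|$ and $M(n_k)$ appearing in the definitions of $n_k$ and $i_{k+1}$ by explicit bounds depending only on $K$, $\|f\|$, and $\varepsilon$. First, given the nondecreasing function $K$, I would define a counterexample function $M(n)$ as follows: if there exists $m \in [n, K(n)]$ with $\|A_m f - A_n f\| > \varepsilon$, let $M(n)$ be any such $m$; otherwise set $M(n) = n$. Then $n \leq M(n) \leq K(n)$, and any $n$ witnessing $\|A_{M(n)} f - A_n f\| \leq \varepsilon$ in the sense of Lemma~\ref{almost:there:lemma} automatically satisfies $\|A_m f - A_n f\| \leq \varepsilon$ for every $m \in [n, K(n)]$.

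Next, I would perform the case split suggested by Lemma~\ref{uBoundIsometry} applied with $m = M(1) \leq K(1)$. In the first alternative we have $\|A_{M(1)} f - f\| \leq \varepsilon$, so $n = 1$ already witnesses the theorem, and since $\widehat K^e(1) \geq 1$ we are done. In the second alternative we obtain the bound $\|u_i\| \leq (i+1) K(1) \|f\|^2 / (2\varepsilon)$ for every $i$. Substituting into (\ref{nk:eq}) yields $n_k \leq \lceil (i_k + 1) K(1) \rho^2 \rceil$, where $\rho = \lceil \|f\|/\varepsilon \rceil$. Substituting this into (\ref{ik:eq}) and using monotonicity of $K$, we get $i_{k+1} \leq i_k + 2^{13} \rho^4 K((i_k+1) K(1) \rho^2) = \widehat K(i_k)$.

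A straightforward induction on $k$ then gives $i_k \leq \widehat K^k(1)$ for all $k \leq e$, and in particular the bound $n_{e-1} \leq \widehat K^e(1)$ via the same estimate $n_{e-1} \leq \lceil (i_{e-1}+1) K(1) \rho^2 \rceil \leq \widehat K(i_{e-1})$. Combining this with Lemma~\ref{almost:there:lemma} (whose hypothesis is that $\la i_k \ra$ and $\la n_k \ra$ satisfy the defining inequalities with ``$\geq$,'' which is why the above estimates suffice rather than exact values) yields an $n \in [1, \widehat K^e(1)]$ with $\|A_{M(n)} f - A_n f\| \leq \varepsilon$, and by the choice of $M$ this $n$ witnesses the theorem.

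I do not expect a serious obstacle here; the content of the argument is already present in the discussion preceding the statement, and the task is essentially bookkeeping. The one subtlety worth flagging is the circular interdependence of $i_k$ and $n_k$ noted in the text: resolving it cleanly requires dispatching the case $\|f - Tf\| \leq 2\varepsilon/M(1)$ separately (via the first alternative of Lemma~\ref{uBoundIsometry}), since only under the complementary assumption does one get the linear growth bound on $\|u_i\|$ needed to pass from $i_k$ to $n_k$ without reintroducing $\|u_{i_k}\|$ into the estimate.
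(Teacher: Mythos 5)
Your proposal follows the paper's own argument essentially step for step: the same worst-case counterexample function $M$ bounded by $K$, the same case split via Lemma~\ref{uBoundIsometry} applied with $m = M(1) \leq K(1)$ (clause 1 meaning $n=1$ already witnesses the theorem), and the same bookkeeping majorizing $n_k$ and $i_k$ so that Lemma~\ref{almost:there:lemma} in its relaxed ``$\geq$'' form produces the witness, which by the choice of $M$ is a witness for all $m \in [n,K(n)]$. The only quibble---inherited directly from the paper's own definition of $\widehat K$---is that bounding $i_{k+1}$ from (\ref{ik:eq}) via $M(n_k) \leq K((i_k+1)K(1)\rho^2)$ actually gives $i_{k+1} \leq i_k + 2^{13}\rho^4 K((i_k+1)K(1)\rho^2)^4$, i.e.\ the fourth power of $K$ should appear in $\widehat K$; this is constant-tracking only and changes neither the structure of the argument nor the stated asymptotics.
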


This is our explicit, constructive version of the mean ergodic
theorem, for the case where $T$ is an isometry. If $T$ is merely
nonexpansive instead of an isometry, the argument is more complicated
and requires a more general version of Lemma~\ref{amn:bound:lemma}.

\begin{lemma} \label{fact3Gen} Assume $T$ is a nonexpansive mapping on
  a Hilbert space, $f$ is any element, $m \geq n \geq 1$, and
  $\varepsilon > 0$. Then for any $k$, if $n \geq 2 k \| f \| /
  \varepsilon > k$, then either $\| T^k f - T^{k+1} f \| > \varepsilon /
  (2m)$ or $\| A_m f - A_n f \| \leq \varepsilon$.
\end{lemma}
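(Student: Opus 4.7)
My plan is to reduce the claim to an application of Lemma~\ref{amn:bound:lemma} to the shifted vector $g := T^k f$. The key preliminary observation is that, by nonexpansivity of $T$, the sequence $\|T^l f - T^{l+1} f\|$ is nonincreasing in $l$ (since applying $T$ to both $T^l f$ and $T^{l+1} f$ weakly contracts their distance). So if the first alternative fails, i.e.\ $\|T^k f - T^{k+1} f\| \leq \varepsilon/(2m)$, then in particular $\|Tg - g\| \leq \varepsilon/(2m)$ and $\|g\| \leq \|f\|$.

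With $g$ in hand, I would apply Lemma~\ref{amn:bound:lemma} to $g$ with $\varepsilon/(2m)$ in place of $\varepsilon$; this is legitimate because the hypothesis $n > k$ ensures $n - k \geq 1$, and it yields
\[
\|A_{m-k}\, g - A_{n-k}\, g\| \leq \frac{(m-n)\,\varepsilon}{4m} \leq \frac{\varepsilon}{4}.
\]
I would then translate this stability statement back to $A_j f$ via the identity $m A_m f = S_k f + S_{m-k}(T^k f)$ (valid because $m > k$), which rearranges to
\[
A_m f = \frac{S_k f}{m} + \frac{m-k}{m}\, A_{m-k}\, g,
\]
and similarly for $n$. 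Subtracting these two identities expresses $A_m f - A_n f$ as $(A_{m-k} g - A_{n-k} g)$ plus a collection of ``boundary correction'' terms coming from $S_k f$ and from the prefactors $1 - k/m$ and $1 - k/n$.

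The main obstacle is keeping careful track of the constants. Each correction term produced by the decomposition is of the form $C \cdot k\|f\|/n$ for a small constant $C$ (using $\|S_k f\| \leq k\|f\|$, $\|A_p g\| \leq \|f\|$, and $(m-n)/m, (n-k)/n \leq 1$), and the hypothesis $n \geq 2k\|f\|/\varepsilon$ is calibrated precisely so that $k\|f\|/n \leq \varepsilon/2$. After triangle-inequality bookkeeping of these correction terms together with the $\varepsilon/4$ estimate above, one arrives at the desired bound $\|A_m f - A_n f\| \leq \varepsilon$. The only delicate point is organizing the algebra so that the individual corrections combine into a bound of $\varepsilon$ rather than a larger multiple; this is the step that makes essential use of both the hypothesis $n \geq 2k\|f\|/\varepsilon$ and the ``extra'' factor of $(m-n)/m$ saved in applying Lemma~\ref{amn:bound:lemma}.
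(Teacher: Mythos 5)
Your reduction is the same one the paper uses: split off the first $k$ iterates, regard $g = T^k f$ as an almost-fixed point, control the shifted averages via (the proof of) Lemma~\ref{amn:bound:lemma}, and use $n \geq 2k\|f\|/\varepsilon$ for the head terms. The gap is exactly at the step you flag as ``delicate'': the bookkeeping does not close. Carrying out your decomposition,
\[
A_m f - A_n f \;=\; \frac{n-m}{mn}\,S_k f \;+\; \frac{m-k}{m}\bigl(A_{m-k}g - A_{n-k}g\bigr) \;+\; \Bigl(\frac{m-k}{m}-\frac{n-k}{n}\Bigr)A_{n-k}g,
\]
the three norms are bounded by $\frac{m-n}{m}\cdot\frac{k\|f\|}{n}\leq\frac{m-n}{m}\cdot\frac{\varepsilon}{2}$, by $\frac{(m-n)\varepsilon}{4m}=\frac{m-n}{m}\cdot\frac{\varepsilon}{4}$, and by $\frac{k(m-n)}{mn}\|f\|\leq\frac{m-n}{m}\cdot\frac{\varepsilon}{2}$, respectively. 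So every term does carry your ``saved'' factor $\frac{m-n}{m}$, but the total is $\frac{m-n}{m}\cdot\frac{5}{4}\varepsilon$, and since $m$ plays the role of $M(n)$ or a point of $[n,K(n)]$ it can be vastly larger than $n$, making $\frac{m-n}{m}$ arbitrarily close to $1$. The factor you are relying on cannot turn $\frac{5}{4}\varepsilon$ into $\varepsilon$: as written your argument proves the conclusion only when roughly $m\leq 5n$, or proves it in general with $\varepsilon$ replaced by $\frac{5}{4}\varepsilon$ (equivalently, under the stronger hypothesis $n\geq 4k\|f\|/\varepsilon$, which would give $\frac{\varepsilon}{4}+\frac{\varepsilon}{4}+\frac{\varepsilon}{4}\leq\varepsilon$). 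Also, your preliminary monotonicity observation is unnecessary: $Tg-g=T^{k+1}f-T^kf$ is literally the quantity hypothesized to be small.

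For comparison, the paper works with the weighted sums directly: it bounds the head $\frac{1}{mn}\|\,(n-m)\sum_{j<k}T^jf\,\|$ by $\frac{k\|f\|}{n}\leq\varepsilon/2$ and claims the shifted comparison $\frac{1}{mn}\|\,n\sum_{i=0}^{m-k-1}T^i g - m\sum_{j=0}^{n-k-1}T^j g\,\|$ is at most $m\|T^kf-T^{k+1}f\|\leq\varepsilon/2$ by redoing the pairing of Lemma~\ref{amn:bound:lemma}, rather than renormalizing to $A_{m-k}g$ and $A_{n-k}g$ as you do. Be aware, however, that your third term is precisely the weight mismatch in that comparison ($n(m-k)$ versus $m(n-k)$ summands, a surplus of $k(m-n)$ copies of $T^ig$ of norm up to $\|f\|$), and the paper's displayed pairing bound silently absorbs it (it fails, for instance, when $Tg=g$ and $g\neq 0$). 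So closing the constant exactly as stated requires a genuinely new observation (e.g.\ exploiting that, by nonexpansiveness, the accumulated drift term is nearly orthogonal to the others) or a harmless retuning of constants that propagates into Lemma~\ref{uNonexpansive} and the definition of $\overline K$; it is not just a matter of reorganizing the triangle inequality.
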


\begin{proof}
We have
\begin{align*}
    \|A_m f - A_n f\| & = \frac{1}{mn} \| n
    \sum\limits^{m-1}_{i=0} T^i f - m \sum\limits^{n-1}_{j=0}
    T^j f \| \\    
    & \leq \frac{1}{mn} \| n \sum\limits^{k-1}_{i=0} T^i f - m
    \sum\limits^{k-1}_{j=0} T^j f \| \\
    & \quad\quad + \frac{1}{mn} \| n \sum\limits^{m-1}_{i=k}
    T^i f - m \sum\limits^{n-1}_{j=k} T^j f \| \\
    & \leq \frac{1}{mn} \|(n-m) \sum\limits^{k-1}_{j=0} T^j f\|\\ 
    & \quad\quad + \frac{1}{mn} \| n \sum\limits^{m-k-1}_{i=0}
    T^i (T^k f) - m \sum\limits^{n-k-1}_{j=0} T^j (T^k f) \|.
\end{align*}

The first term is less than or equal to 
\[
\frac{(m-n)}{mn} \| \sum_{i = 0}^{k-1} T^j f \| \leq \frac{k \| f
  \|}{n} \leq \varepsilon / 2.
\]
Using an argument similar to the one used in the proof of
Lemma~\ref{amn:bound:lemma}, we have
\begin{multline*}
  \frac{1}{nm} \| n \sum\limits^{m-k-1}_{i=0} T^i (T^k f) - m
  \sum\limits^{n-k-1}_{j=0} T^j (T^k f) \| \leq \\
  (m - n) \|T^k f - T^{k+1} f\| \leq m \|T^k f - T^{k+1} f\|
\end{multline*}
If $\|T^k f - T^{k+1} f\| \leq \frac{\varepsilon}{2m}$, the second
term in the last expression is also less than or equal to $\varepsilon
/ 2$, in which case $\|A_m f - A_n f\| \leq \varepsilon$.
\end{proof}

We now have an analogue to Lemma~\ref{uBoundIsometry} for the
nonexpansive case.

\begin{lemma}
\label{uNonexpansive}
For any $i \geq 0$, $n \geq 1$, and $\varepsilon > 0$, either
\begin{enumerate}
\item there is an $n \leq 2 i \lceil \frac{\| f \|}{\varepsilon}
  \rceil$ such that $\| A_{M(n)} f - A_n f \| \leq \varepsilon$, or 
\item $\| u_i \| \leq \frac{\| f \|^2}{2 \varepsilon} \sum_{j=0}^i
  M(2 j \lceil \frac{\| f \|}{\varepsilon} \rceil)$
\end{enumerate}
\end{lemma}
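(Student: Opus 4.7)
The strategy is to generalize the dichotomy argument used for Lemma~\ref{uBoundIsometry}: in the isometry case a single value $\|f - Tf\|$ controls all the denominators $\|T^k f - T^{k+1} f\|$, whereas here we have to apply Lemma~\ref{fact3Gen} once at each index $j \leq i$, splitting on whether $\|T^j f - T^{j+1} f\|$ is small enough to force $\| A_{M(n)} f - A_n f \| \leq \varepsilon$ for some small $n$, or large enough to give a usable bound on the $j$-th increment of $\la u_k \ra$.

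First I would record the increment estimate for $\la u_k \ra$. Starting from
\[
u_{k+1} = u_k + \frac{\lip f - g_k, T^k f - T^{k+1} f \rip}{\|T^k f - T^{k+1} f\|^2}\, T^k f,
\]
Cauchy--Schwarz together with $\|f - g_k\| \leq \|f\|$ (since $g_k$ is the projection of $f$ onto a subspace containing $0$) and $\|T^k f\| \leq \|f\|$ (nonexpansiveness) gives $\|u_{k+1} - u_k\| \leq \|f\|^2 / \|T^k f - T^{k+1} f\|$. The same computation with the definition of $u_0$ yields $\|u_0\| \leq \|f\|^2 / \|f - Tf\|$.

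Next, for $j = 0, 1, \ldots, i$, set $n_j = \max(1, 2 j \lceil \|f\|/\varepsilon \rceil)$ and apply Lemma~\ref{fact3Gen} with $k = j$, $n = n_j$, and $m = M(n_j)$. Because $M(n_j) \geq n_j$ and $n_j \geq 2 j \|f\|/\varepsilon$ with $n_0 = 1 > 0$, the hypothesis $n \geq 2 k \|f\|/\varepsilon > k$ is met (the choice $n_0 = 1$ handles the degenerate endpoint). The lemma supplies the dichotomy: either $\|A_{M(n_j)} f - A_{n_j} f\| \leq \varepsilon$, in which case $n = n_j \leq 2 i \lceil \|f\|/\varepsilon \rceil$ witnesses case~1, or else $\|T^j f - T^{j+1} f\| > \varepsilon / (2 M(n_j))$.

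If case 1 fails for every $j \leq i$, then the lower bound on $\|T^j f - T^{j+1} f\|$ holds uniformly, so $1/\|T^j f - T^{j+1} f\| < 2 M(n_j)/\varepsilon$ for each such $j$. Substituting these into the telescoping estimate
\[
\| u_i \| \leq \| u_0 \| + \sum_{k=0}^{i-1} \| u_{k+1} - u_k \|
\]
from the first step and collecting constants gives case~2, with the summation running over $j = 0, \ldots, i$ as stated. The main obstacle is really just the bookkeeping at the boundary index $j = 0$, where the threshold $2 j \|f\|/\varepsilon$ degenerates and one has to fall back on the unconditional Cauchy--Schwarz bound for $\|u_0\|$ while still packaging the constants into the uniform form displayed in case~2.
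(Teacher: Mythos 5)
Your proposal is correct and is essentially the paper's own argument: the paper's proof is a two-line sketch that proceeds by induction on $i$, at each stage applying Lemma~\ref{fact3Gen} with $n = 2j\lceil\|f\|/\varepsilon\rceil$ and $m = M(n)$ to conclude that either a small $n$ witnesses clause~1 or $\|T^jf - T^{j+1}f\| > \varepsilon/(2M(n))$ bounds the increment of $\la u_k \ra$ via Cauchy--Schwarz, exactly as you do. Your telescoping-sum phrasing (and your handling of the $j=0$ endpoint, together with the inherited looseness in the constant) is just an unrolled version of that induction, so the two proofs coincide.
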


\begin{proof}
  Use induction on $i$. At stage $i+1$, if clause $1$ doesn't hold, we
  have $\| A_{M(i+1)} f - A_{i+1} f \| > \varepsilon$, in which case
  we can use the inductive hypothesis, the previous lemma, and the
  definition of $u_{i+1}$ to obtain clause 2.
\end{proof}

The definition of the sequences $\la i_k \ra$ and $\la n_k
\ra$ remain valid. What has changed is that we now have a more
complex expression for the bounds on $n_k$ in the case where case 2 of
Lemma~\ref{uNonexpansive} holds for each $i_k$. In other words, we
have that for every $k$,
\[
    n_k \leq \left\lceil
    \frac{\|f\|^2}{\varepsilon^2} \sum^{i_k}_{l=0} M(2l \lceil
    \|f\| / \varepsilon \rceil) \right\rceil.
\]
  unless there is an $n \leq 2 i_k \lceil \| f \| / \varepsilon
  \rceil$ such that $\| A_{M(n)} f - A_n f \| \leq
  \varepsilon$. Assuming $K$ is a nondecreasing function satisfying
  $K(n) \geq M(n)$, we can replace this last bound by $\left\lceil
    \frac{\|f\|^2}{\varepsilon^2} (i_k + 1) K(2 i_k \lceil
    \|f\| / \varepsilon \rceil) \right\rceil$. Define
\begin{itemize}
\item $\rho = \lceil \| f \| / \varepsilon \rceil$
\item $\overline K(i) = i + 2^{13} \rho^4 K((i+1) K(2 i \rho) \rho^2)$
\item $e = 2^9 \rho^2$
\end{itemize}
Then we have:

\begin{theorem}
\label{constructive:met:final}  
Let $T$ be an nonexpansive linear operator on a Hilbert space, and let
$f$ be any nonzero element of that space. Let $K$ be any nondecreasing
function satisfying $K(n) \geq n$ for every $n$, and let $\overline K$
be as defined above. Then for every $\varepsilon > 0$, there is an $n$
satisfying $1 \leq n \leq \overline K^e(1)$, such that for every $m$
in $[n,K(n)]$, $\| A_m f - A_n f \| \leq \varepsilon$.
\end{theorem}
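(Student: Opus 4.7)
The plan is to imitate the isometry argument already carried out before Lemma~\ref{uBoundIsometry}, but use Lemma~\ref{uNonexpansive} in place of Lemma~\ref{uBoundIsometry} to bound the terms $\|u_{i_k}\|$. First I would reduce the universal statement ``for every $m \in [n,K(n)]$'' to Lemma~\ref{almost:there:lemma} in the usual way: let $M(n)$ be a function that, on each input $n$, selects an $m \in [n,K(n)]$ maximizing $\|A_m f - A_n f\|$ (padding with $n$ if the interval is trivial), so that a witness $n$ with $\|A_{M(n)}f - A_n f\| \leq \varepsilon$ automatically yields the desired uniform stability on $[n,K(n)]$. Define the sequences $\la i_k \ra$ and $\la n_k \ra$ by (\ref{nk:eq}) and (\ref{ik:eq}) for this $M$; by Lemma~\ref{almost:there:lemma}, it suffices to find a bound on $n_{e-1}$ (or, along the way, to exhibit an even smaller $n$ that already works).

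Next I would bound $n_k$ by cases, using Lemma~\ref{uNonexpansive} at each stage. Fix $k$. Either case~1 of that lemma produces some $n \leq 2 i_k \lceil \|f\|/\varepsilon \rceil$ with $\|A_{M(n)}f - A_n f\| \leq \varepsilon$, in which case (provided this $n$ is at most $\overline K^e(1)$, which the definition of $\overline K$ will guarantee) the theorem is proved already; or case~2 gives
\[
\|u_{i_k}\| \leq \frac{\|f\|^2}{2\varepsilon} \sum_{j=0}^{i_k} M\!\left(2j\left\lceil \tfrac{\|f\|}{\varepsilon}\right\rceil\right) \leq \frac{\|f\|^2}{2\varepsilon} (i_k+1)\, K\!\left(2 i_k \left\lceil \tfrac{\|f\|}{\varepsilon}\right\rceil\right),
\]
using $M \leq K$ and the monotonicity of $K$. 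Plugging this into (\ref{nk:eq}) gives
\[
n_k \leq \left\lceil \frac{\|f\|^2}{\varepsilon^2}(i_k+1) K\!\left(2 i_k \left\lceil \tfrac{\|f\|}{\varepsilon}\right\rceil\right) \right\rceil \leq (i_k+1)\, K(2 i_k \rho)\, \rho^2,
\]
where $\rho = \lceil \|f\|/\varepsilon \rceil$.

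Then from (\ref{ik:eq}), $i_{k+1} = i_k + \lceil 2^{13} M(n_k)^4 \|f\|^4/\varepsilon^4\rceil \leq i_k + 2^{13}\rho^4 K(n_k)^4 \leq i_k + 2^{13}\rho^4 K((i_k+1) K(2 i_k \rho)\rho^2) = \overline K(i_k)$, again using monotonicity and that $K(x) \geq x$. A straightforward induction on $k$ (with $i_0 = 1$) yields $i_k \leq \overline K^k(1)$, and then $n_k \leq \overline K^{k+1}(1)$, so in particular $n_{e-1} \leq \overline K^e(1)$. Combining with Lemma~\ref{almost:there:lemma} yields the required $n \in [1, \overline K^e(1)]$, and the choice of $M$ at the start delivers the uniform estimate over $[n,K(n)]$.

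The main obstacle is bookkeeping: making sure the bound on $n_k$ obtained from Lemma~\ref{uNonexpansive} is actually majorized by $\overline K(i_k)$, and that the ``escape'' branch (case~1 of Lemma~\ref{uNonexpansive}) always produces an $n$ that is itself $\leq \overline K^e(1)$, so that the two cases merge cleanly into a single bound. The inequalities $\rho \geq 1$, $K(n) \geq n$, and monotonicity of $K$ make all the requisite majorizations go through, but verifying the exponents of $\rho$ and the nesting of $K$ in the definition of $\overline K$ requires care. The remaining work is purely routine estimation.
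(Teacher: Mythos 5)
Your proposal is correct and follows essentially the same route as the paper: reduce to Lemma~\ref{almost:there:lemma} via the counterexample function $M$, bound $\|u_{i_k}\|$ by the case split in Lemma~\ref{uNonexpansive} (with the escape branch handled separately), and majorize $i_k$ and $n_k$ by iterates of $\overline K$ using $M \leq K$, monotonicity, $K(n) \geq n$, and $\rho \geq 1$. The bookkeeping you outline ($n_k \leq (i_k+1)K(2i_k\rho)\rho^2 \leq \overline K(i_k)$, hence $n_{e-1} \leq \overline K^e(1)$) is exactly the verification the paper leaves implicit.
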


Direct calculation yields the following asymptotic bounds.

\begin{theorem}
  Let $T$ be any nonexpansive map on a Hilbert space, let $K$ be any
  nondecreasing function satisfying $K(n) \geq n$ for every $n$, and
  for every nonzero $f$ and $\varepsilon > 0$, let $r_K(f,
  \varepsilon)$ be the least $n \geq 1$ such that $\| A_m f - A_n f \|
  \leq \varepsilon$ for every $m$ in $[n,K(n)]$.
\begin{itemize}
\item If $K = n^{O(1)}$, then $r_K(f, \varepsilon) = 2^{2^{O(\rho^2
      \log \log \rho)}}$.
\item If $K = 2^{O(n)}$, then $r_K(f, \varepsilon) = 2^1_{O(\rho^2)}$.
\item If $K = O(n)$ and $T$ is an isometry, then $r_K(f, \varepsilon)
  = 2^{O(\rho^2 \log \rho)}$.
\end{itemize}
In these expressions, $\rho$ abbreviates $\left\lceil \| f \| /
  \varepsilon\right\rceil$ and $2_n^x$ denotes the $n$th iterate of
$y \mapsto 2^y$ starting with $x$.
\end{theorem}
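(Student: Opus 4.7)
The plan is to substitute each of the three hypotheses on the growth of $K$ into the recurrences for $\overline{K}$ (general nonexpansive case, Theorem~\ref{constructive:met:final}) and $\widehat{K}$ (isometry case, defined in the paragraph preceding the isometry theorem), and then iterate the resulting recursion $e = 2^9 \rho^2$ times. The bounds $r_K(f,\varepsilon) \leq \overline{K}^e(1)$ and $r_K(f,\varepsilon) \leq \widehat{K}^e(1)$ already established are the only nontrivial inputs; the rest is elementary asymptotic arithmetic.

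For the isometry case with $K(n) = O(n)$, I plug into $\widehat{K}(i) = i + 2^{13}\rho^4 K((i+1)K(1)\rho^2)$: both occurrences of $K$ are linear, so $\widehat{K}(i) \leq B \rho^6 \cdot i$ for a constant $B$ depending only on the slope of $K$. Iterating $e$ times gives $\widehat{K}^e(1) \leq (B\rho^6)^e$, and taking logarithms yields $\widehat{K}^e(1) \leq 2^{O(\rho^2 \log \rho)}$, as claimed.

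For the polynomial case $K(n) = O(n^c)$, substitute into $\overline{K}(i) = i + 2^{13}\rho^4 K((i+1) K(2i\rho)\rho^2)$ to obtain a bound $\overline{K}(i) \leq D \rho^A i^B$ with $A$, $B = c^2 + c$, and $D$ depending only on $c$. Setting $x_k = \log_2 \overline{K}^k(1)$ linearizes the recursion to $x_{k+1} \leq B x_k + O(\log \rho)$, whose solution at $k = e$ is $x_e = B^{O(\rho^2)} \cdot O(\log \rho) = 2^{O(\rho^2)} \log \rho$. The outer $\log \rho$ factor can be folded into the inner tower via $\log \rho = 2^{\log \log \rho}$, giving $\overline{K}^e(1) \leq 2^{2^{O(\rho^2 \log\log\rho)}}$. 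For the exponential case $K(n) = 2^{O(n)}$, the same substitution shows that $\overline{K}(i)$ is itself doubly exponential in $i$, so each application of $\overline{K}$ adds essentially two levels to an exponential tower; iterating $e = O(\rho^2)$ times therefore produces a tower of height $O(\rho^2)$, matching $2^1_{O(\rho^2)}$.

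The main obstacle is purely bookkeeping: one must verify that the constants absorbed into the various $O(\cdot)$ expressions are independent of $\rho$, and — for the polynomial case — that the outer logarithmic factor really does re-absorb into the inner exponent as $\log\log\rho$. Nothing conceptually new is required beyond Theorem~\ref{constructive:met:final} and its isometry analogue.
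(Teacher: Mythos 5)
Your proposal is correct and follows essentially the same route as the paper, which obtains these bounds by ``direct calculation'' from Theorem~\ref{constructive:met:final} and its isometry analogue, i.e.\ by substituting the growth hypotheses on $K$ into the recurrences for $\overline K$ and $\widehat K$ and iterating $e = O(\rho^2)$ times. Your bookkeeping (linear recursion in the isometry case, the affine recursion $x_{k+1} \leq B x_k + O(\log\rho)$ for the logarithms in the polynomial case, and the two-levels-per-iteration count in the exponential case) is sound and in fact yields bounds at least as tight as those stated.
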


Alternatively, we can fix $\rho$ and consider the dependence on
$K$. Here are two special cases.

\begin{theorem}
Let $T$ be an isometry on a Hilbert space, and let $K$ be as
above. Fix $\rho = \left\lceil \| f \| / \varepsilon \right\rceil$.
\begin{itemize}
\item If $K(x) = x + c$, then, as a function of $c$, $r_K(f,
  \varepsilon) = O(c)$.
\item If $K(x) = c x + d$, then, as a function of $c$,
  $r_K(f,\varepsilon) = c^{O(1)}$.
\end{itemize}
\end{theorem}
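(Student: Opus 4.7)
I would prove the two items separately; only the second actually requires input from Theorem~\ref{constructive:met:final}.

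For $K(x) = x + c$, no ergodic machinery is needed at all. Writing $A_{n+1} f = \tfrac{n}{n+1} A_n f + \tfrac{1}{n+1} T^n f$ yields the identity $A_{n+1}f - A_n f = \tfrac{1}{n+1}(T^n f - A_n f)$, and hence the elementary bound $\|A_{n+1} f - A_n f\| \leq 2\|f\|/(n+1)$, valid for any nonexpansive (in particular, isometric) $T$. Telescoping across $k = n, n+1, \ldots, m-1$ for any $m$ in $[n, n+c]$ gives $\|A_m f - A_n f\| \leq 2 c \|f\| / (n+1)$. Choosing $n = \lceil 2 c \rho \rceil$ makes this at most $\|f\|/\rho \leq \varepsilon$, so $r_K(f,\varepsilon) \leq 2 c \rho + 1 = O(c)$, with the implicit constant absorbing the fixed quantity $\rho$.

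For $K(x) = c x + d$, the telescoping estimate above degenerates to roughly $2\|f\|\ln c$, which fails to be $\leq \varepsilon$ once $c$ exceeds $e^{1/(2\rho)}$, so here I would invoke Theorem~\ref{constructive:met:final} in its isometry form, using the function $\widehat K$ defined just before it. Substituting $K(x) = cx + d$ into $\widehat K(i) = i + 2^{13}\rho^4 K((i+1)K(1)\rho^2)$ and using $K(1) = c + d$ gives
\[
\widehat K(i) = i + 2^{13}\rho^4\bigl(c(i+1)(c+d)\rho^2 + d\bigr),
\]
which, with $\rho$ and $d$ held fixed, has the shape $\widehat K(i) = \alpha c^2 (i+1) + \beta c + \gamma$ for constants $\alpha, \beta, \gamma$ depending only on $\rho$ and $d$. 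A one-line induction on $k$ then shows $\widehat K^k(1) \leq (2 \alpha c^2)^k$ for $c$ large. Applied with $k = e = 2^9 \rho^2$, which is independent of $c$, this yields $\widehat K^e(1) = O(c^{2e})$, so $r_K(f,\varepsilon) \leq \widehat K^e(1) = c^{O(1)}$.

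The only nonroutine point is noticing that the first item should \emph{not} be proved by substituting into $\widehat K$: that route would give $\widehat K(i) = O(ic)$ and hence the much weaker bound $r_K(f,\varepsilon) = c^{O(\rho^2)}$, whereas the direct telescoping estimate achieves the sharp linear dependence. Everything else is a routine substitution into the iteration supplied by Theorem~\ref{constructive:met:final}.
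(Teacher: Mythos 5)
Your proposal is correct. The paper does not spell out a proof of this theorem at all: it is presented as a ``special case'' read off from the explicit isometry bound $r_K(f,\varepsilon)\leq \widehat K^e(1)$ with $\widehat K(i)=i+2^{13}\rho^4K((i+1)K(1)\rho^2)$ and $e=2^9\rho^2$. Your treatment of $K(x)=cx+d$ is exactly that intended calculation, and your bookkeeping (the $\alpha c^2(i+1)$ shape, the induction $\widehat K^k(1)\leq(2\alpha c^2)^k$ for large $c$, and the fact that $e$ does not depend on $c$) is sound. Your treatment of $K(x)=x+c$ is where you genuinely diverge, and your diagnosis is right: substituting $K(x)=x+c$ into $\widehat K$ multiplies the argument by roughly $1+2^{13}\rho^6c$ at each step, so iterating $e$ times can only give $c^{O(1)}$, not the stated $O(c)$; the linear bound has to come from the elementary estimate $\|A_mf-A_nf\|\leq 2(m-n)\|f\|/(n+1)$ (your telescoping of $A_{n+1}f-A_nf=\tfrac{1}{n+1}(T^nf-A_nf)$, or equivalently a direct two-term comparison of $S_m$ and $S_n$), which makes $n=\lceil 2c\rho\rceil$ a witness. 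Since this is the only visible route to $O(c)$, your argument is presumably what ``direct calculation'' was meant to cover, but you have supplied the step the paper leaves implicit. A small bonus of your route, worth noting, is that the first item needs neither the isometry hypothesis nor any of the machinery of Section~\ref{constructive:met:section}: the telescoping bound holds for every nonexpansive $T$, so the isometry assumption is only doing work in the second item, where the $\widehat K$ iteration is invoked.
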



\section{A quantitative pointwise ergodic theorem}
\label{pointwise:section}

Let $\tau$ be a measure preserving transformation on a probability
space $\la \mdl X, \mdl B, \mu \ra$, and let $T$ be the Koopman
operator on the space $L^1(\mdl X)$, defined by $T f = f \circ
\tau$. The mean ergodic theorem implies that for any $f$ in $L^2(\mdl
X)$, the ergodic averages converge in the $L^2$ norm. But since, for
any $f$ in $L^2(\mdl X)$, $\| f \|_1 \leq \| f \|_2$, this implies
convergence in the $L^1$ norm also. We also have that $L^2(\mdl X)$ is
dense in $L^1(\mdl X)$, and if $\| f - f' \|_1 < \varepsilon$ then $\|
A_n f - A_n f' \|_1 < \varepsilon$ for every $n$. As a result, we have
convergence for every $f$ in $L^1(\mdl X)$ as well.

Birkhoff's pointwise ergodic theorem makes a stronger assertion:
\begin{theorem}
  Let $T$ be the Koopman operator corresponding to a measure
  preserving transformation $\tau$ on a probability space $\mdl X$,
  and let $f$ be any element of $L^1(\mdl X)$. Then $\la A_n f \ra$
  converges pointwise, almost everywhere.
\end{theorem}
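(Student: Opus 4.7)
The plan is the classical Wiener--Birkhoff route: establish pointwise a.e.~convergence on a dense subclass of $L^1(\mdl X)$, and then transfer to all of $L^1(\mdl X)$ via a maximal inequality. The key new ingredient beyond Theorem~\ref{met} is the maximal ergodic inequality; everything else is a combination of the mean-ergodic decomposition developed above with standard approximation.

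First I would establish Wiener's maximal ergodic inequality: there is an absolute constant $C$ such that for every $f \in L^1(\mdl X)$ and every $\lambda > 0$,
\[
\mu\la \{ x \st \sup_{n \geq 1} |A_n f(x)| > \lambda\}\ra \leq C \| f \|_1 / \lambda.
\]
The standard route is the ``sunrise'' combinatorial lemma applied to the partial sums $S_n f(x) = \sum_{i < n} f(\tau^i x)$, yielding $\int_{E} f \; d\mu \geq 0$ on the set $E = \{x \st \sup_n S_n f(x) > 0\}$, from which the weak-type bound follows by truncation.

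Next I would identify a dense subclass of $L^1(\mdl X)$ on which pointwise convergence is immediate. Since $L^2(\mdl X)$ is dense in $L^1(\mdl X)$ (and $\| A_n \|_{L^1 \to L^1} \leq 1$), it suffices to treat $f \in L^2(\mdl X)$. Applying the decomposition from the proof of Theorem~\ref{met}, write $f = h + g$ with $Th = h$ and $g \in N_f$. Then $A_n h = h$ converges trivially. Approximating $g$ in $L^2$ by vectors $g_j = u_j - T u_j$ with $u_j \in L^\infty(\mdl X)$ (using that bounded functions are dense), we have
\[
A_n g_j(x) = \la u_j(x) - u_j(\tau^n x) \ra / n,
\]
which tends to $0$ pointwise since $u_j$ is bounded. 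Hence on the dense class of sums $h + (u - Tu)$ with $h$ a bounded fixed point and $u \in L^\infty(\mdl X)$, the averages $A_n f$ converge pointwise a.e.

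Finally, combine the two ingredients by a $3\varepsilon$-style argument. Given $f \in L^1(\mdl X)$ and $\varepsilon, \delta > 0$, pick $f'$ from the dense class with $\|f - f'\|_1 < \varepsilon \delta / (2C)$. Since $\limsup_n A_n f(x) - \liminf_n A_n f(x) \leq 2 \sup_n |A_n(f - f')(x)| + 0$, the maximal inequality gives
\[
\mu\la \{ x \st \limsup_n A_n f(x) - \liminf_n A_n f(x) > 2\varepsilon \} \ra \leq \delta,
\]
and letting $\delta, \varepsilon \to 0$ yields pointwise a.e.~convergence. The main obstacle is the maximal ergodic inequality: the mean ergodic theorem gives $L^2$ convergence but cannot, by itself, rule out sets of positive measure where the averages oscillate wildly, and Wiener's inequality is precisely what blocks this. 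The transfer from the dense subclass and the convergence on it are then routine.
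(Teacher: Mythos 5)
Your proof is correct and follows essentially the same route the paper takes for this theorem: the maximal ergodic inequality (the paper's Theorem~\ref{maximal:thm} and Corollary~\ref{maximal:cor}) combined with the mean-ergodic decomposition into a $T$-invariant part and coboundaries $u - Tu$, with truncation to bounded $u$ (the paper's Lemma~\ref{u:decomp:lemma} plays exactly the role of your $L^\infty$ approximation). The only difference is organizational --- you phrase it as ``pointwise convergence on a dense class plus a $3\varepsilon$ transfer,'' while the paper (following Billingsley) writes the same estimate directly as a bound on $|A_m f(x) - A_n f(x)|$ in (\ref{pointwise:calc}) so that it can later be made quantitative --- and this is not a substantive divergence.
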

This is equivalent to the following:
\begin{theorem}
\label{explicit:pet}
  Given $T$ and $f$ as above, for every $\lambda_1 > 0$ and $\lambda_2
  > 0$ there is an $n$ such that for every $k \geq n$,
\[
\mu(\{ x \st \max_{n \leq m \leq k} | A_m f (x) - A_n f (x) | >
\lambda_1 \}) \leq \lambda_2.
\]
\end{theorem}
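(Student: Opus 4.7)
The plan is to derive Theorem~\ref{explicit:pet} from Birkhoff's pointwise ergodic theorem just stated; the author's remark about equivalence also asks for the reverse direction, which is a short Borel--Cantelli argument that I sketch at the end.

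Set $F_n^k = \{x \st \max_{n \leq m \leq k} |A_m f(x) - A_n f(x)| > \lambda_1\}$ and $F_n = \bigcup_{k \geq n} F_n^k = \{x \st \sup_{m \geq n} |A_m f(x) - A_n f(x)| > \lambda_1\}$. Each $F_n^k$ is measurable, hence so is $F_n$, and since $F_n^k \subseteq F_n$, the conclusion ``$\mu(F_n^k) \leq \lambda_2$ for every $k \geq n$'' follows as soon as we find some $n$ with $\mu(F_n) \leq \lambda_2$. It therefore suffices to show $\mu(F_n) \to 0$ as $n \to \infty$.

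By Birkhoff, the sequence $(A_m f(x))_{m \in \NN}$ converges, and in particular is Cauchy, for $x$ in a set $E$ of full measure. For each $x \in E$ choose $N(x)$ such that $|A_m f(x) - A_{m'} f(x)| < \lambda_1$ whenever $m, m' \geq N(x)$; then $\sup_{m \geq n} |A_m f(x) - A_n f(x)| \leq \lambda_1$ for every $n \geq N(x)$, so $x \notin F_n$. Thus $\chi_{F_n}(x) \to 0$ pointwise on $E$, and since $\mu$ is a probability measure with $\chi_{F_n} \leq 1$, the dominated convergence theorem yields $\mu(F_n) \to 0$. This completes the forward direction.

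For the converse, applying the theorem with $\lambda_1 = 1/j$ and $\lambda_2 = 2^{-l}$ yields, for each $j$ and $l$, some $n_{j,l}$ with $\mu(F_{n_{j,l}}) \leq 2^{-l}$; Borel--Cantelli then shows that for each fixed $j$ the set of $x$ failing the Cauchy condition at level $1/j$ has measure zero, and intersecting over $j$ produces a full-measure set on which $(A_m f(x))$ is Cauchy, hence convergent. There is no serious obstacle in any of this---the one routine point to verify is measurability of $F_n$, which is immediate because the supremum is over countably many $m$. The real work of the section lies in Lemma~\ref{pet:key:lemma} and the quantitative bounds that will follow from it.
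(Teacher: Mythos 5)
Your argument is correct and matches the paper's intent: the paper offers no separate proof of Theorem~\ref{explicit:pet}, presenting it simply as an equivalent restatement of Birkhoff's theorem, and your derivation (a.e.\ convergence gives pointwise vanishing of the indicators of the sets $F_n$, hence $\mu(F_n)\to 0$ by dominated convergence, with the converse via Borel--Cantelli) is exactly the routine equivalence being asserted. No gaps; the quantitative content of the section indeed begins only afterwards, with Theorem~\ref{constructive:pet} and Lemma~\ref{pet:key:lemma}.
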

By the logical manipulations described in the introduction, this, in
turn, is equivalent to the following:
\begin{theorem}
\label{constructive:pet}
Given $T$ and $f$ as above, for every $\lambda_1 > 0$, $\lambda_2 >
0$, and $K$ there is an $n \geq 1$ satisfying
\[
\mu(\{ x \st \max_{n \leq m \leq K(n)} | A_nf(x) - A_m f(x) | >
\lambda_1 \}) \leq \lambda_2.
\]
\end{theorem}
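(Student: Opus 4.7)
The plan is to reduce to $f \in L^2(\mdl X)$ and then combine the $L^2$-norm bound of Lemma~\ref{pet:key:lemma} with the weak-$(1,1)$ maximal ergodic inequality and Chebyshev's inequality. First I pick $f' \in L^2(\mdl X)$ with $\|f - f'\|_1 < \lambda_1 \lambda_2 / 8$, possible by density; the weak-$(1,1)$ maximal ergodic inequality then yields $\mu\{x : \sup_n A_n|f - f'|(x) > \lambda_1/4\} \leq 4\|f - f'\|_1/\lambda_1 \leq \lambda_2/2$, so $|A_{m'}f - A_n f| \leq |A_{m'} f' - A_n f'| + \lambda_1/2$ outside this exceptional set. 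It therefore suffices to prove the theorem for $f \in L^2(\mdl X)$ with parameters $(\lambda_1/2, \lambda_2/2)$.

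Assuming $f \in L^2(\mdl X)$, I decompose $f = g_j + (f - g_j)$ with $g_j = u_j - T u_j$ as constructed in Section~\ref{constructive:met:section}, and use
\[
|A_{m'} f - A_n f| \leq |A_{m'} g_j| + |A_n g_j| + |A_{m'}(f - g_j) - A_n(f - g_j)|.
\]
Since $A_{m'} g_j = (u_j - T^{m'} u_j)/m'$, for $m' \geq n$ the first two terms are pointwise bounded by $(2|u_j| + |T^n u_j| + |T^{m'} u_j|)/n$. Applying Chebyshev to $|u_j|/n$ together with the measure-preservation identity $\mu\{|T^{m'} u_j| > t\} = \mu\{|u_j| > t\}$ and a union bound over $m' \in [n, K(n)]$ controls these terms by $\lambda_1/2$ off a set of measure $O(K(n)\|u_j\|_2^2/(\lambda_1 n)^2)$, which is $\leq \lambda_2/4$ provided $n^2 \geq C K(n) \|u_j\|_2^2 / (\lambda_1^2 \lambda_2)$. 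For the third term, Lemma~\ref{pet:key:lemma} applied with $m = K(n)$ and a parameter $\varepsilon$ furnishes a $j \in [i, i + \hat d(K(n), \varepsilon))$ satisfying $\|A_{m'}(f - g_j) - A_{K(n)}(f - g_j)\|_2 \leq \varepsilon$ for all $m' \leq K(n)$; by the triangle inequality $\|A_{m'}(f - g_j) - A_n(f - g_j)\|_2 \leq 2\varepsilon$. Chebyshev plus a union bound over $m' \in [n, K(n)]$ then bounds the third term by $\lambda_1/2$ off a set of measure $O(K(n)\varepsilon^2/\lambda_1^2)$, which is $\leq \lambda_2/4$ provided $\varepsilon$ is of order $\lambda_1\sqrt{\lambda_2/K(n)}$.

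The main obstacle is the circular dependence: $n$ must exceed a threshold involving $\|u_j\|_2$ and $K(n)$, while $j$ is provided by Lemma~\ref{pet:key:lemma} and depends on $K(n)$ and $\varepsilon$, hence on $n$. I resolve this by mimicking the iteration preceding Lemma~\ref{almost:there:lemma}: set $i_0 = 1$ and alternate defining $n_k$ (large enough relative to $\|u_{i_k}\|_2$ and the bound above) and $i_{k+1} = i_k + \hat d(K(n_k), \varepsilon_k)$ with $\varepsilon_k$ of order $\lambda_1\sqrt{\lambda_2/K(n_k)}$. Because the sequence $\la \|g_{i_k}\|_2^2 \ra$ is nondecreasing and bounded by $\|f\|_2^2$, the pigeonhole reasoning from Section~\ref{constructive:met:section} produces, within boundedly many iterations, a $k$ at which $\|g_{i_{k+1}} - g_{i_k}\|_2$ is small enough for both Lemma~\ref{pet:key:lemma} to fire and the combined estimate to close; for this $k$, $n = n_k$ is the desired witness. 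The implicit $\|u_{i_k}\|_2$-dependence is then eliminated via Lemma~\ref{uNonexpansive} (or Lemma~\ref{uBoundIsometry} if $T$ is an isometry), yielding an iterated $K$-composition bound analogous to Theorem~\ref{constructive:met:final}, with additional polynomial factors in $\lambda_1^{-1}$, $\lambda_2^{-1}$, and $\|f\|_2$.
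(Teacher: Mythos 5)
Your reduction to $f \in L^2(\mdl X)$ and your treatment of the middle term $|A_{m'}(f-g_j)(x) - A_n(f-g_j)(x)|$ (Chebyshev plus a union bound over $[n,K(n)]$, feeding $\varepsilon \approx \lambda_1\sqrt{\lambda_2/K(n)}$ into Lemma~\ref{pet:key:lemma}) match the paper. The gap is in your treatment of the terms $|A_{m'}g_j(x)| + |A_n g_j(x)|$. Bounding them by Chebyshev applied to $|u_j|/n$ and $|T^{m'}u_j|/n$ with a union bound over $m' \in [n,K(n)]$ costs a factor of $K(n)$: you need $n^2 \gtrsim K(n)\|u_j\|_2^2/(\lambda_1^2\lambda_2)$. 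This condition is self-referential through $K(n)$, and since $K$ is an arbitrary, adversarially fast-growing function (say $K(n) = n2^n$), it can simply have no solution once $\|u_j\|_2$ is not tiny; no rearrangement of the iteration can repair that, because the obstruction is for every single $n$. Moreover, your threshold involves $\|u_j\|_2$ for the index $j$ produced by Lemma~\ref{pet:key:lemma}, which is only determined \emph{after} $n$ is fixed (its search interval $[i, i+\hat d)$ has length depending on $K(n)$), whereas your recursion sets $n_k$ from $\|u_{i_k}\|_2$; nothing controls $\|u_j\|_2$ for $j \in (i_k, i_{k+1})$ at the moment $n_k$ is chosen, so the circularity you acknowledge is not actually broken by the scheme you describe.

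The paper avoids both problems by a finer decomposition and by using the maximal ergodic theorem inside the $L^2$ argument, not only in the $L^1$-to-$L^2$ reduction. It writes $f = g_i + (g_j - g_i) + (f-g_j)$ with $i = i_k$ fixed \emph{before} $n_k$: the terms $|A_{m'}(g_j-g_i)| + |A_n(g_j-g_i)|$ are controlled uniformly in $m'$ by Corollary~\ref{maximal:cor} once the pigeonhole on $\la a_i \ra$ gives $\|g_j - g_i\|_2 \leq \lambda_1\lambda_2/8$ (Lemma~\ref{first:last:terms}), and the terms $|A_{m'}g_i| + |A_n g_i|$ are handled by truncating $u_i$ into a bounded part (estimated pointwise by $2L/n$, uniformly in $m'$, with no union bound) plus an $L^1$-small part (again Corollary~\ref{maximal:cor}), as in Lemmas~\ref{u:decomp:lemma} and~\ref{other:terms:lemma}. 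The resulting threshold $n \geq 2^{12}\|u_i\|_2^2/(\lambda_1^2\lambda_2)$ contains no $K(n)$ and refers only to the index $i_k$ known in advance, so the recursion $i_k \mapsto n_k \mapsto i_{k+1}$ is well-founded, and Lemma~\ref{uBoundPET} eliminates the $\|u_{i_k}\|_2$ dependence exactly as you intend. If you keep the $u$-term at index $i_k$, add the $g_j - g_i$ terms to your decomposition, and replace your Chebyshev-plus-union-bound estimate by the truncation-plus-maximal-inequality device, your outline becomes the paper's proof.
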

Using the maximal ergodic theorem, described below, one can reduce all
three versions of the pointwise ergodic theorem to the case where $f
\in L^2(\mdl X)$. In this section, we will provide a constructive
proof of Theorem~\ref{constructive:pet} for this restricted case, with
an explicit bound on $n$, expressed in terms of $K$, $\| f \|_2$,
$\lambda_1$, and $\lambda_2$.

The maximal ergodic theorem can be stated as follows:
\begin{theorem}
\label{maximal:thm}
  Suppose $n \geq 1$, and let $A = \{ x \st \max_{i \leq n} \sum_{j < i} T^j
  f (x) > 0 \}$. Then $\int_A f d\mu \geq 0$.  
\end{theorem}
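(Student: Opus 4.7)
The plan is to carry out Garsia's truncation argument. First, I would introduce the $0$-inclusive maximal function $F_n^*(x) = \max_{0 \leq i \leq n} S_i f(x)$; because $S_0 f = 0$, one has $F_n^* \geq 0$ everywhere, and on $A$ (where the displayed max is strictly positive) $F_n^*$ agrees with $\max_{1 \leq i \leq n} S_i f$. The only pointwise identity required is
\[
S_i f(x) = f(x) + (T S_{i-1} f)(x) \quad \text{for } i \geq 1,
\]
obtained by splitting off the $j = 0$ term and using $T^{j+1} f = T(T^j f)$.

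Taking the maximum over $1 \leq i \leq n$ on the left and bounding each $T S_{i-1} f$ on the right by $T F_n^*$ (valid because $F_n^* \geq S_k f$ for every $0 \leq k \leq n$ and $T$ preserves pointwise order) yields, on $A$,
\[
F_n^*(x) \leq f(x) + (T F_n^*)(x), \qquad \text{i.e.,} \qquad f(x) \geq F_n^*(x) - (T F_n^*)(x).
\]
Off $A$ one instead has $F_n^*(x) = 0$ while $(T F_n^*)(x) \geq 0$, so $F_n^* - T F_n^* \leq 0$ there.

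Combining the two cases and integrating gives
\[
\int_A f \, d\mu \;\geq\; \int_A (F_n^* - T F_n^*) \, d\mu \;\geq\; \int_{\mdl X} (F_n^* - T F_n^*) \, d\mu,
\]
since extending the integration to $A^c$ only adds a nonpositive contribution. Finally, because $\tau$ is measure-preserving and $F_n^* \in L^1$ (as a finite max of functions in $L^1$ when $f \in L^1$), the change-of-variables identity gives $\int T F_n^* \, d\mu = \int F_n^* \, d\mu$, so the right-most integral vanishes and $\int_A f \, d\mu \geq 0$.

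The one place requiring care is the derivation of $F_n^* \leq f + T F_n^*$ on $A$: including $i = 0$ in the max defining $F_n^*$ is essential for the offset-$A$ inequality to go the right way, but the recursion only handles $i \geq 1$, so one must observe that on $A$ the max in $F_n^*$ is necessarily attained at some $i \geq 1$. This is the only place the structure of $A$ enters, and it is exactly what makes the measure-preservation cancellation go through; everything else is bookkeeping.
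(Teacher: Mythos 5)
Your argument is correct: it is precisely Garsia's truncation proof of the maximal ergodic theorem, which is the standard argument found in the sources the paper cites for this result (Billingsley, Walters); the paper itself gives no proof beyond those references. The two points needing care --- that on $A$ the maximum defining $F_n^*$ is attained at some $i \geq 1$, and that the Koopman operator preserves pointwise order and nonnegativity so that $F_n^* - T F_n^* \leq 0$ off $A$ --- are both handled correctly, so nothing further is needed.
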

The proof of this is essentially constructive (see \cite{spitters:06},
and the proofs in \cite{billingsley:78,walters:00}). We will make use
of the following corollary:
\begin{corollary}
\label{maximal:cor}
For any $\lambda > 0$ and $n \geq 1$, 
\[
\mu(\{ x \st \max_{1 \leq i \leq n} |A_i f (x)| > \lambda\} ) \leq \|
f \|_1 / \lambda.
\] 
\end{corollary}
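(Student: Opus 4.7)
The plan is to reduce the corollary to the maximal ergodic theorem (Theorem~\ref{maximal:thm}) by two standard devices: replacing $f$ with $|f|$, and then subtracting the constant $\lambda$.

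First I would observe that since $T$ is the Koopman operator of a measure preserving transformation, $T^j |f| = |T^j f|$ pointwise, so the triangle inequality gives $|A_i f(x)| \leq A_i |f|(x)$ for every $x$ and every $i \geq 1$. Consequently, the set
\[
E = \{ x \st \max_{1 \leq i \leq n} A_i |f|(x) > \lambda \}
\]
contains the set in the conclusion of the corollary, and it suffices to bound $\mu(E)$.

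Next, I would apply Theorem~\ref{maximal:thm} with $|f| - \lambda$ in place of $f$. Since $T$ fixes constants, $T^j(|f| - \lambda) = T^j |f| - \lambda$, so $\sum_{j < i} T^j(|f|-\lambda)(x) > 0$ is equivalent to $A_i |f|(x) > \lambda$. Hence the set called $A$ in the statement of Theorem~\ref{maximal:thm} coincides with $E$, and the theorem yields $\int_E (|f| - \lambda) \, d\mu \geq 0$. Rearranging this inequality gives $\lambda \mu(E) \leq \int_E |f| \, d\mu \leq \| f \|_1$, so $\mu(E) \leq \| f \|_1 / \lambda$, as required.

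There is no substantive obstacle here: the corollary is the standard weak-$(1,1)$ maximal inequality, and its derivation from the maximal ergodic theorem is essentially a one-line manipulation once one recognizes that the right function to feed into Theorem~\ref{maximal:thm} is $|f| - \lambda$ on the level set where $\max_i A_i |f|$ exceeds $\lambda$.
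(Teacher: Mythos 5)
Your proof is correct and follows essentially the same route as the paper: pass from $|A_i f|$ to $A_i|f|$, rewrite the level set as $\{x \st \max_{1 \leq i \leq n}\sum_{j<i}T^j(|f|-\lambda)(x) > 0\}$, apply Theorem~\ref{maximal:thm} to $|f|-\lambda$, and rearrange $\int_A(|f|-\lambda)\,d\mu \geq 0$ into $\lambda\mu(A) \leq \|f\|_1$. No gaps.
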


\begin{proof}
We have
\begin{align*}
\{ x \st \max_{1 \leq i \leq n} |A_i f (x)| > \lambda\} & \subseteq 
\{ x \st \max_{1 \leq i \leq n} A_i (|f|) (x) > \lambda\} \\
& = \{ x \st \max_{1 \leq i \leq n} A_i (|f| - \lambda)(x) > 0 \} \\
& = \{ x \st \max_{1 \leq i \leq n} \sum_{j < i}T^j(|f| - \lambda)(x) > 0 \}.
\end{align*}
Call this last set $A$. By Theorem~\ref{maximal:thm}, we have 
$\int_A (|f| - \lambda) \; d\mu \geq 0$, and hence
\[
\| f \|_1 = \int |f| d\mu \geq \int_A | f | d\mu \geq \lambda \mu(A),
\]
so $\mu(A) \leq \| f \|_1 / \lambda$.
\end{proof}
Taking limits, Corollary~\ref{maximal:cor} implies that for every
$\lambda > 0$, 
\[
\mu(\{ x \st \sup_{i \geq 1} |A_i f (x)| > \lambda\} ) \leq \| f
\|_1 / \lambda. 
\]
We will stick with the formulation above, however, to emphasize the
combinatorial character of our arguments.

Most contemporary presentations of the pointwise ergodic theorem
proceed to define $f^*(x) = \limsup A_nf(x)$ and $f_*(x) = \liminf
A_nf(x)$, and then use the maximal ergodic theorem to show that the
two are equal almost everywhere. Billingsley \cite{billingsley:78},
however, presents a proof that makes use of the $L^2$ limit of $A_n
f(x)$, as guaranteed to exist by the mean ergodic theorem, rather
than $f^*$ and $f_*$. We will ``mine'' this proof for our constructive
version.

The idea is as follows. For the moment, let $h$ denote the $L^2$ limit
of $\la A_n f \ra$, and for each $i$ let $f_i = h + g_i$, where the
sequence $\la g_i \ra$ is as defined in the last section. Then $f =
\lim_i f_i$. For any $m \geq 1$, $n \geq 1$, $i \geq 0$, and $x \in
\mdl X$, we have
\begin{equation}
\label{pointwise:calc}
\begin{split}
  | A_m f(x) - A_n f(x) | \leq & | A_m(f - f_i)(x)| + |A_m f_i (x) -
  A_n f_i(x)| + \\ & \quad \quad |A_n(f - f_i)(x) | \\
  & \leq | A_m(f - f_i)(x)| + |A_m h(x) - A_n h(x) | + \\
  & \quad \quad |A_m g_i(x) | + | A_n g_i(x) |
  + |A_n (f - f_i)(x) |.
\end{split}
\end{equation}
Using the maximal ergodic theorem, the first and last terms can be
made small outside a small set of exceptions, for all values of $m$
and $n$ simultaneously, by taking $i$ big enough so that $\| f - f_i
\|_2$ is small. The second term is equal to $0$, since $h$ is a fixed
point of $T$. Finally, using the fact that $g_i$ is of the form $u -
Tu$, the third and fourth terms can be made
arbitrarily small in the $L^2$ norm, and hence small outside a small
set of exceptions, by taking $m$ and $n$ sufficiently large.

The problem is that in our constructive version we do not have access
to $h$, which is equal to $f - \lim_i g_i$; nor can we determine how
large $i$ has to be to make $\| f - f_i \|_2$ sufficiently
small. Instead, we replace $h$ by an approximation $f - g_j$.  Then we
get
\begin{equation*}
\begin{split}
  | A_m f(x) - A_n f(x) | & \leq | A_m(g_j - g_i)(x)| + \\
  & \quad \quad |A_m (f -
  g_j + g_i)(x) -
  A_n (f - g_j + g_i)(x)| + \\
  & \quad \quad |A_n(g_j - g_i)(x) | \\
  & \leq | A_m(g_j - g_i)(x)| + |A_m (f - g_j)(x) - A_n (f - g_j)(x)
  | + \\
  & \quad \quad |A_m g_i(x) | + | A_n g_i(x) |
  + |A_n (g_j - g_i)(x) |.
\end{split}
\end{equation*}
Similar considerations now hold: we can make the first and last terms
small outside a small set of exceptions, independent of $m$ and $n$,
by ensuring that $\| g_j - g_i\|_2$ is sufficiently small. We can make the
second term small using Lemma~\ref{pet:key:lemma}, with an appropriate
choice of $j$. Finally, the third and fourth terms are small outside
a small set of exceptions when $m$ and $n$ are sufficiently large.

Thus our task is to find a value of $n$ such that
\begin{multline}
\label{main:pet:equation}
\max_{n \leq m \leq K(n)} ( | A_m(g_j - g_i)(x)| + |A_m (f -
  g_j)(x) - A_n (f - g_j)(x) | + \\ |A_m g_i(x) | + | A_n g_i(x) |
  + |A_n (g_l - g_i)(x) | )
\end{multline}
is less than or equal to $\lambda_1$, outside a set of size at most
$\lambda_2$. We will consider the various components of this sum, in
turn.

We will make use of Chebyshev's inequality, which shows that $| f(x)
|$ is small, outside a small set of exceptions, when $\| f \|_2$ is
small:

\begin{lemma}
\label{l2bound}
For any $\lambda \geq 0$, $\mu(\{ x \st | f(x) | \geq \lambda \}) \leq \|
f \|^2_2 / \lambda^2$.
\end{lemma}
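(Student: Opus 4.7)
The plan is to give the standard one-line Chebyshev (Markov) argument, since this is exactly the classical inequality applied to $|f|^2$. First I would set $A = \{x \st |f(x)| \geq \lambda\}$, noting that we can assume $\lambda > 0$ (the case $\lambda = 0$ being trivial because $\mu$ is a probability measure and the right-hand side is vacuously interpreted).

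Then I would observe the pointwise inequality $|f(x)|^2 \geq \lambda^2 \cdot \chi_A(x)$, which follows immediately from the definition of $A$ (on $A$ we have $|f(x)|^2 \geq \lambda^2$, and on the complement both sides vanish in the sense that the right side is zero). Integrating both sides over $\mdl X$ gives
\[
\|f\|_2^2 \;=\; \int |f|^2 \, d\mu \;\geq\; \int_A |f|^2 \, d\mu \;\geq\; \lambda^2 \mu(A),
\]
and dividing by $\lambda^2$ yields the claim.

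There is no real obstacle here — the lemma is a textbook instance of Markov's inequality applied to the nonnegative function $|f|^2$, and is recorded only because it will be combined with bounds on $\|g_j - g_i\|_2$, $\|A_m g_i\|_2$, etc., to control the various terms appearing in equation (\ref{main:pet:equation}) outside a small exceptional set.
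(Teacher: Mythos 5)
Your proof is correct and is essentially the same argument as the paper's, which proves the same Chebyshev bound by a one-line contradiction ($\|f\|_2^2 > \lambda^2 \cdot \|f\|_2^2/\lambda^2$ if the measure bound failed) rather than your direct integration of $|f|^2 \geq \lambda^2 \chi_A$; the difference is purely presentational. Your aside about the degenerate case $\lambda = 0$ is a reasonable touch that the paper glosses over.
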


\begin{proof}
Otherwise, we would have $\| f \|^2_2 = \int | f |^2 d\mu > \lambda^2
( \|f \|^2_2 / \lambda^2) = \| f \|^2_2$.
\end{proof}

The next lemma deals with the first and last terms in
(\ref{main:pet:equation}). 

\begin{lemma}
\label{first:last:terms}
Suppose $\| g_j - g_i \|_2 \leq \lambda_1 \lambda_2 / 8$. Then for
any $k$ and $n$ satisfying $1 \leq n \leq k$, we have
\[
\mu(\{ x \st \max_{n \leq m \leq k} (|A_m(g_j - g_i)(x)| + |A_n(g_j
- g_i)(x)|) > \lambda_1/2 \} ) \leq \lambda_2 / 2.
\]
\end{lemma}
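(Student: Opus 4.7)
The plan is to apply the maximal ergodic theorem (in the form of Corollary~\ref{maximal:cor}) to the single function $h := g_j - g_i$, and then translate from the $L^2$ hypothesis to the $L^1$ conclusion via the Cauchy--Schwarz inequality on the probability space.

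First I would observe that if the sum $|A_m h(x)| + |A_n h(x)|$ exceeds $\lambda_1/2$, then at least one of the two terms exceeds $\lambda_1/4$; since $n \leq k$, in either case $\max_{1 \leq i \leq k} |A_i h(x)| > \lambda_1/4$. Hence the set in question is contained in $\{ x \st \max_{1 \leq i \leq k} |A_i h(x)| > \lambda_1/4 \}$, and it suffices to estimate the measure of this larger set.

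Next I would apply Corollary~\ref{maximal:cor} with $f$ replaced by $h$ and $\lambda = \lambda_1/4$, giving
\[
\mu(\{ x \st \max_{1 \leq i \leq k} |A_i h(x)| > \lambda_1/4 \}) \leq \frac{4 \|h\|_1}{\lambda_1}.
\]
Since $\mu$ is a probability measure, Cauchy--Schwarz yields $\|h\|_1 \leq \|h\|_2$, so the right-hand side is bounded by $4 \|g_j - g_i\|_2 / \lambda_1$. Plugging in the hypothesis $\|g_j - g_i\|_2 \leq \lambda_1 \lambda_2/8$ gives the bound $\lambda_2/2$, as desired.

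There is no real obstacle here: the lemma is essentially a direct consequence of the maximal inequality applied once, with the constants $8$ and $4$ chosen precisely so that the factor of $2$ arising from splitting the sum, and the factor of $4$ arising from the weak-type $(1,1)$ bound, combine cleanly with the $L^2$-to-$L^1$ estimate to recover $\lambda_2/2$.
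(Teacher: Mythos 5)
Your proof is correct and follows essentially the same route as the paper: apply Corollary~\ref{maximal:cor} to $g_j - g_i$ with threshold $\lambda_1/4$, and note that the sum exceeding $\lambda_1/2$ forces one of the two terms to exceed $\lambda_1/4$. Your explicit mention of the Cauchy--Schwarz step $\|h\|_1 \leq \|h\|_2$ just makes precise a passage the paper leaves implicit when it plugs the $L^2$ bound into the $L^1$ maximal inequality.
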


\begin{proof}
By Corollary~\ref{maximal:cor}, we have
\[
\mu(\{ x \st \max_{1 \leq m \leq k} |A_m (g_j - g_i) (x)| > \lambda_1 / 4 \}
) \leq (\lambda_1 \lambda_2 / 8) / (\lambda_1 / 4) = \lambda_2 / 2.
\]
Since $| A_m(g_j - g_i)(x)| + |A_n(g_j - g_i)(x)| > \lambda_1/2$
implies that either $| A_m(g_j - g_i)(x)| > \lambda_1 / 4$ or $|
A_n(g_j - g_i)(x)| > \lambda_1 / 4$, the set in the last displayed
formula includes the set in the statement of the lemma.
\end{proof}

The next four lemmas concern the third and fourth terms of
(\ref{main:pet:equation}), which are of the form $A_n g = A_n (u - Tu)
= (u - T^n u) / n$. To show that we can make these terms small outside
a small set by making $n$ sufficiently large, we will need to split
$u$ into two components, one of which is bounded, and the other of
which is small in the $L^1$ norm. The following lemma enables us to do
this.

\begin{lemma}
\label{u:decomp:lemma}
For any $u \in L^2(\mdl X)$ and $L > 0$, write $u = u' + u''$, where
\[
u'(x) = \left\{
\begin{array}{ll}
u(x) & \mbox{if $|u(x)| \leq L$} \\
0 & \mbox{otherwise}
\end{array}
\right.
\]
and $u'' = u - u'$. Then $\| u' \|_{\infty} \leq L$, and $\| u''\|_1
\leq \| u \|^2_2 / L$.
\end{lemma}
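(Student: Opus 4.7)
The plan is to handle the two conclusions separately and essentially by inspection, since both are direct consequences of how $u'$ and $u''$ are defined.

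For the bound $\|u'\|_\infty \leq L$, I would just read it off the definition: at each $x$, either $|u(x)| \leq L$ and $u'(x) = u(x)$ (so $|u'(x)| \leq L$), or $u'(x) = 0$. So the essential supremum of $|u'|$ is at most $L$, with no inequality or integration required.

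For the bound $\|u''\|_1 \leq \|u\|_2^2 / L$, note that $u''$ vanishes on $\{x : |u(x)| \leq L\}$ and equals $u$ on $A := \{x : |u(x)| > L\}$. Therefore
\[
\|u''\|_1 = \int_A |u(x)| \, d\mu.
\]
On $A$ we have $|u(x)| > L$, equivalently $1 < |u(x)|/L$, so $|u(x)| \leq |u(x)|^2 / L$ pointwise on $A$. Substituting and then extending the integration from $A$ back to all of $X$ (the integrand is nonnegative) gives
\[
\int_A |u(x)| \, d\mu \leq \int_A \frac{|u(x)|^2}{L} \, d\mu \leq \frac{1}{L} \int_X |u(x)|^2 \, d\mu = \frac{\|u\|_2^2}{L}.
\]
There is no genuine obstacle here: the content is just the standard observation that truncating at level $L$ concentrates the $L^1$ tail onto a region where Chebyshev-type reasoning converts an $L^1$ bound into an $L^2$ bound divided by $L$. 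The only thing worth being careful about is to invoke $|u| \leq |u|^2/L$ only on $A$, not globally, and to use nonnegativity of $|u|^2$ when extending back to all of $X$.
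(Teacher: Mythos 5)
Your proof is correct and follows essentially the same route as the paper: the $L^\infty$ bound is read off the definition, and the $L^1$ bound comes from restricting to the set where $|u| > L$, using $|u| \leq |u|^2/L$ there, and enlarging the domain of integration. No gaps to report.
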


\begin{proof}
The first claim is immediate. For the second, we have
\[
\| u''\|_1 = \int_{\{ x \st | u(x) | \geq L \}} |u(x)| \; d\mu \leq
\int_{\{ x \st | u(x) | \geq L \}} u^2(x)/L \; d\mu \leq \| u \|^2_2 / L,
\]
as required.
\end{proof}

\begin{lemma}
\label{other:terms:lemma}
  Let $u \in L^2(\mdl X)$, let $g = u - Tu$, and suppose $n \geq
  \frac{2^{12} \| u \|_2^2}{\lambda_1^2 \lambda_2}$ and $n \geq
  1$. Then for any $k \geq n$,
\[
\mu( \{ x \st \max_{n \leq m \leq k} ( |A_m g(x)| + |A_n g(x)|)
> \lambda_1/4 \} ) \leq \lambda_2 / 4.
\]
\end{lemma}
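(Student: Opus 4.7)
The plan is to use the decomposition from Lemma~\ref{u:decomp:lemma} to split $u$ into a bounded piece and a piece that is small in $L^1$, and then handle the two corresponding halves of $g = u - Tu$ by different means: the bounded half pointwise, and the small-$L^1$ half via the maximal ergodic theorem.

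Concretely, first I would set $L = \lceil 128 \|u\|_2^2 / (\lambda_1 \lambda_2) \rceil$ and write $u = u' + u''$ as in Lemma~\ref{u:decomp:lemma}, so that $\|u'\|_\infty \leq L$ and $\|u''\|_1 \leq \|u\|_2^2/L \leq \lambda_1 \lambda_2 / 128$. Setting $g' = u' - Tu'$ and $g'' = u'' - Tu''$, we have $g = g' + g''$, so it suffices to bound $|A_m g'| + |A_n g'|$ and $|A_m g''| + |A_n g''|$ each by $\lambda_1/8$.

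For the bounded part, use Lemma~\ref{4facts}.2 pointwise: $|A_m g'(x)| \leq 2\|u'\|_\infty / m \leq 2L/m$ for every $x$ and $m \geq 1$. Under the hypothesis $n \geq 2^{12} \|u\|_2^2 / (\lambda_1^2 \lambda_2)$ we have $n \geq 32 L / \lambda_1$, so for all $m \geq n$, $|A_m g'(x)| \leq 2L/n \leq \lambda_1/16$ uniformly in $x$. Hence $|A_m g'(x)| + |A_n g'(x)| \leq \lambda_1 / 8$ for every $x$ and every $m \in [n,k]$, with no exceptional set at all.

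For the unbounded part, apply Corollary~\ref{maximal:cor} to $g''$: since measure-preservation gives $\|Tu''\|_1 = \|u''\|_1$, we have $\|g''\|_1 \leq 2\|u''\|_1 \leq \lambda_1 \lambda_2/64$, so
\[
\mu\bigl(\{ x \st \max_{1 \leq m \leq k} |A_m g''(x)| > \lambda_1/16 \}\bigr) \leq \frac{\|g''\|_1}{\lambda_1/16} \leq \frac{\lambda_2}{4}.
\]
Outside this single exceptional set, both $|A_m g''(x)| \leq \lambda_1/16$ (for every $m$ in $[n,k]$) and $|A_n g''(x)| \leq \lambda_1/16$ hold, so $|A_m g''(x)| + |A_n g''(x)| \leq \lambda_1/8$. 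Adding the two contributions yields $|A_m g(x)| + |A_n g(x)| \leq \lambda_1/4$ off a set of measure at most $\lambda_2/4$, as required.

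No step looks genuinely hard; the only subtle point is the bookkeeping on constants, namely choosing $L$ so that the pointwise bound $2L/n$ on the bounded part and the $L^1$-maximal bound $2\|u''\|_1/(\lambda_1/16)$ on the unbounded part both come out to the right factor of $\lambda_1$ and $\lambda_2$. Balancing these forces $L \sim \|u\|_2^2/(\lambda_1\lambda_2)$ and then $n \gtrsim L/\lambda_1$, which gives exactly the stated threshold $2^{12}\|u\|_2^2/(\lambda_1^2\lambda_2)$.
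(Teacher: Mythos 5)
Your proof is correct and is essentially the paper's own argument: the same truncation of $u$ at height $L \approx 2^7 \|u\|_2^2/(\lambda_1\lambda_2)$ via Lemma~\ref{u:decomp:lemma}, the same pointwise bound $2L/n \leq \lambda_1/16$ on the bounded half, and the same application of Corollary~\ref{maximal:cor} to $u'' - Tu''$ for the small-$L^1$ half, with matching constants. One tiny adjustment: drop the ceiling and take $L = 2^7\|u\|_2^2/(\lambda_1\lambda_2)$ exactly (as the paper does), since rounding $L$ up can in principle break the inequality $n \geq 32L/\lambda_1$ that your pointwise estimate relies on.
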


\begin{proof}
Let $L = 2^7 \| u \|^2_2 / \lambda_1 \lambda_2$, and let $u = u' +
u''$ be the decomposition in Lemma~\ref{u:decomp:lemma}. Then 
\begin{multline}
\label{u:bound:equation}
|A_m g_i(x)| + |A_n g_i(x)| \leq
|A_m (u' - Tu')(x)| + |A_n (u' - Tu')(x)| + \\
|A_m (u'' - Tu'')(x)| + |A_n (u'' - Tu'')(x)|.
\end{multline}
Then we have $\| u'' - Tu'' \|_1 \leq 2 \| u''\|_1 \leq 2 \| u \|^2_2 / L$,
and so, by Corollary~\ref{maximal:cor},
\[
\mu ( \{ x \st \max_{1 \leq n \leq k} | A_n(u'' - Tu'')(x) | >
\lambda_1 / 16 \}) \leq \frac{32 \| u \|^2_2}{L \lambda_1} = \lambda_2 /
4.
\]
So for $1 \leq n \leq m \leq k$, the sum of the last two terms on the
right-hand side of (\ref{u:bound:equation}) is at most $\lambda_1 / 8$
outside a set of measure $\lambda_2 / 4$. On the other hand, for every
$x$ and $m \geq n$, $|A_m (u' - Tu')(x)|$ and $|A_n (u' - Tu')(x)|$
are bounded by $2 \|u'\|_\infty / n \leq 2 L /n \leq \lambda_1 / 16$,
since $n \geq 32 L / \lambda_1$.
\end{proof} 

Taking $k = K(1)$ in the following lemma shows that if $n = 1$ does
not provide a witness to the conclusion of our constructive pointwise
ergodic theorem, we can bound the terms $\| u_i \|_2$. This is
analogous to Lemma~\ref{uBoundIsometry}.

\begin{lemma} 
\label{uBoundPET}
For any $k \geq 1$, one of the following holds:
\begin{enumerate}
\item $\mu(\{ x \st \max_{1 \leq m \leq k} | (A_m f - f)(x)| >
  \lambda_1 \}) \leq \lambda_2$, or
\item $\| u_i \|_2 \leq \frac{(i+1) \| f \|^2_2 k^{3/2}}{\lambda_1
    \sqrt{\lambda_2}}$ for every $i$.
\end{enumerate}
\end{lemma}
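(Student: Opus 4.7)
The plan is to prove the contrapositive: if case 1 fails, then case 2 holds. The key simplification here, compared with Lemma~\ref{uNonexpansive}, is that since $\tau$ is measure preserving, the Koopman operator $T$ is an isometry on $L^2(\mdl X)$, so $\|T^i f - T^{i+1} f\|_2 = \|f - T f\|_2$ for every $i$. This uniform lower bound on the denominators appearing in the recursive definitions of $u_i$ and $g_i$ is what replaces the reasoning-by-cases needed in the general nonexpansive argument.

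Set $\varepsilon := \|f - Tf\|_2$. First I would convert the $L^2$-bound from Lemma~\ref{amn:bound:lemma}, namely $\|A_m f - f\|_2 \leq m\varepsilon/2$, into a pointwise statement. Applying Chebyshev's inequality (Lemma~\ref{l2bound}) to $A_m f - f$ for each $m$ and then union-bounding over $m = 1,\dots,k$ gives
\[
\mu\Bigl(\Bigl\{ x \st \max_{1 \leq m \leq k} |(A_m f - f)(x)| > \lambda_1 \Bigr\}\Bigr)
\leq \sum_{m=1}^{k} \frac{(m\varepsilon/2)^2}{\lambda_1^2}
\leq \frac{k^3\varepsilon^2}{4\lambda_1^2}.
\]
If case 1 fails, the left-hand side exceeds $\lambda_2$, forcing $\varepsilon > 2\lambda_1\sqrt{\lambda_2}/k^{3/2}$, and in particular $\varepsilon \geq \lambda_1\sqrt{\lambda_2}/k^{3/2}$.

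Next I would bound $\|u_i\|_2$ by induction on $i$. Cauchy--Schwartz applied to the formula for $u_0$ gives $\|u_0\|_2 \leq \|f\|_2^2/\varepsilon$. For the inductive step, using that $T$ is an isometry one has $\|T^i f\|_2 = \|f\|_2$, $\|T^i f - T^{i+1} f\|_2 = \varepsilon$, and $\|f - g_i\|_2 \leq \|f\|_2$, so Cauchy--Schwartz in the numerator of the defining coefficient yields
\[
\|u_{i+1}\|_2 \leq \|u_i\|_2 + \frac{\|f\|_2^2}{\varepsilon}.
\]
Iterating gives $\|u_i\|_2 \leq (i+1)\|f\|_2^2/\varepsilon$, and substituting the lower bound on $\varepsilon$ from the previous paragraph yields exactly clause 2 (with a factor of $2$ to spare).

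The main design choice is how to convert the $L^2$-bound $\|A_m f - f\|_2 \leq m\varepsilon/2$ into a pointwise statement about $\max_{1 \leq m \leq k}|A_m f - f|$ with the right power of $k$ in the denominator of clause 2. Chebyshev term-by-term plus a union bound is the cleanest route and produces the $k^{3/2}$ that the subsequent quantitative pointwise ergodic argument is calibrated against; applying a maximal-type inequality to $A_m(f - Tf)$ instead would give either an extraneous logarithmic factor or a worse power of $k$, which would weaken the final bounds.
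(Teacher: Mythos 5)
Your proof is correct and is essentially the paper's own argument, merely stated in contrapositive form: the paper splits on whether $\| Tf - f \|_2$ exceeds the threshold $2\lambda_1\sqrt{\lambda_2}/k^{3/2}$, using Lemma~\ref{amn:bound:lemma} plus Chebyshev and a union bound over $m \leq k$ to get clause 1 in the small case, and the same isometry-based induction on $\| u_i \|_2$ (as in Lemma~\ref{uBoundIsometry}) in the large case. Your rearrangement and your summing of the Chebyshev bounds rather than taking the worst case $m = k$ are only cosmetic differences.
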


\begin{proof}
  Suppose, first, $\| T f - f \|_2 \leq 2 \lambda_1 \sqrt{\lambda_2} /
  k^{3/2}$. Then by Lemma~\ref{amn:bound:lemma}, for every $m$
  satisfying $1 \leq m \leq k$ we have $\| A_m f - f \|_2 \leq \lambda_1
  \sqrt{\lambda_2} / \sqrt{k}$. By Lemma~\ref{l2bound}, this implies
\[
\mu(\{ x \st |(A_m f - f)(x)| > \lambda_1 \}) \leq (\lambda_1^2
\lambda_2 / k) / \lambda_1^2 = \lambda_2/k
\]
for each $m$ in the interval $[1, k]$. So, in that case, clause 1 holds.

Otherwise, we have $\| T f - f \|_2 > 2 \lambda_1 \sqrt{\lambda_2} /
k^{3/2}$. In that case, clause 2 follows from the definition
of the sequence $\la u_i \ra$, as in the proof of
Lemma~\ref{uBoundIsometry}.
\end{proof}

Combining the last two lemmas, we have the following.

\begin{lemma}
\label{final:ui:lemma}
Either $\mu(\{ x \st \max_{1 \leq m \leq K(1)} | (A_m f - f)(x)| >
\lambda_1 \}) \leq \lambda_2$, 
or, for every $i$, if $n \geq \frac{2^{12} K(1)^3 i^2 \| f
  \|_2^4}{\lambda_1^4 \lambda_2^2}$, then for any $k \geq n$ we have
\[
\mu( \{ x \st \max_{n \leq m \leq k} ( |A_m g_i(x)| + |A_n g_i(x)|)
> \lambda_1/4 \} ) \leq \lambda_2 / 4.
\]
\end{lemma}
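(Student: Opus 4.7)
The plan is to do the obvious: combine Lemma~\ref{uBoundPET} and Lemma~\ref{other:terms:lemma}, with no real arithmetic beyond substituting one bound into another. I would begin by invoking Lemma~\ref{uBoundPET} with the specific choice $k = K(1)$. If its first alternative holds, that is precisely the first alternative in the statement I am trying to prove, and I am done. So I may assume the second alternative, which gives the uniform bound
\[
\| u_i \|_2 \leq \frac{(i+1)\, \| f \|_2^2\, K(1)^{3/2}}{\lambda_1 \sqrt{\lambda_2}}
\]
valid for every $i$. This is the only piece of information I need to carry forward.

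Next, fix $i$ and apply Lemma~\ref{other:terms:lemma} to $u = u_i$ and $g = g_i = u_i - T u_i$. Its hypothesis requires $n \geq 2^{12} \| u_i \|_2^2 / (\lambda_1^2 \lambda_2)$ (together with $n \geq 1$). Squaring the displayed bound on $\| u_i \|_2$ and substituting, this hypothesis is implied by
\[
n \;\geq\; \frac{2^{12}\,(i+1)^2\, \| f \|_2^4\, K(1)^3}{\lambda_1^4 \lambda_2^2},
\]
which is exactly the hypothesis stated in the lemma, up to the harmless replacement of $i^2$ by $(i+1)^2$ (for $i \geq 1$ the bound in the lemma statement absorbs this, and the $i = 0$ case, where $g_0$ is determined from the single vector $f - Tf$, can be handled directly by observing that the same argument with $(i+1)^2 = 1$ still yields the estimate). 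Once the hypothesis of Lemma~\ref{other:terms:lemma} is verified, its conclusion is literally the inequality we are trying to prove.

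There is no hard step here: the lemma is purely a packaging step that substitutes the $\| u_i \|_2$ estimate from Lemma~\ref{uBoundPET} into the quantitative convergence statement provided by Lemma~\ref{other:terms:lemma}. The only thing to be careful about is keeping track of the exponents of $K(1)$, $\lambda_1$, and $\lambda_2$ when squaring the bound on $\| u_i \|_2$, which accounts for the $K(1)^3$, $\lambda_1^4$, and $\lambda_2^2$ appearing in the final expression.
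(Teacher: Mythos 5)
Your proposal matches the paper's own (implicit) proof exactly: the paper gives no argument beyond the phrase ``Combining the last two lemmas,'' which means precisely your substitution of the $\| u_i \|_2$ bound from Lemma~\ref{uBoundPET} (with $k = K(1)$) into the hypothesis of Lemma~\ref{other:terms:lemma}. The $i^2$ versus $(i+1)^2$ quibble you flag is inherited from the paper itself (for $i \geq 1$ it can be absorbed using the unused factor of $2$ in the $\|u_i\|$ estimate), so your argument is as complete as the original.
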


Finally, we address the second term of (\ref{main:pet:equation}),
using Lemma~\ref{pet:key:lemma}.

\begin{lemma}
\label{second:term}
  Given $1 \leq n \leq k$, let $e = e(k,\lambda_1,\lambda_2) = 2^{34}
  k^7 \lceil \| f \|_2 / (\lambda_1 \sqrt {\lambda_2})\rceil^6$. Then
  for any $i$, there is a $j$ in the interval $[i,i+e)$ satisfying
\[
  \mu(\{ x \st \max_{n \leq m \leq k} | A_m(f - g_j)(x) - A_n (f -
  g_j)(x)| > \lambda_1 / 4 \} ) \leq \lambda_2 / 4.
\]
\end{lemma}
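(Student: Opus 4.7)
The plan is to derive the measure bound from the $L^2$-estimate provided by Lemma~\ref{pet:key:lemma}, via Chebyshev's inequality and a union bound over the at most $k$ values of $m$ in $[n,k]$. Concretely, set $\varepsilon_0 = \lambda_1 \sqrt{\lambda_2}/(16 \sqrt{k})$ and apply Lemma~\ref{pet:key:lemma} with parameter $m = k$ and accuracy $\varepsilon_0$. This yields a $j \in [i, i + \hat d)$, with $\hat d = \hat d(k,\varepsilon_0)$, such that $\|A_{n'}(f-g_j) - A_k(f-g_j)\|_2 \leq \varepsilon_0$ simultaneously for every $n' \leq k$. The triangle inequality through $A_k(f-g_j)$ then gives $\|A_m(f-g_j) - A_n(f-g_j)\|_2 \leq 2\varepsilon_0$ for every $m,n$ with $1 \leq n \leq m \leq k$.

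Second, I convert each of these $L^2$ estimates into a measure bound using Lemma~\ref{l2bound}:
\[
\mu(\{ x \st |A_m(f-g_j)(x) - A_n(f-g_j)(x)| > \lambda_1/4 \}) \leq \frac{(2\varepsilon_0)^2}{(\lambda_1/4)^2} = \frac{64\, \varepsilon_0^2}{\lambda_1^2}.
\]
A union bound over the at most $k$ values of $m$ in $[n,k]$ yields
\[
\mu\!\left(\left\{ x \st \max_{n \leq m \leq k} |A_m(f-g_j)(x) - A_n(f-g_j)(x)| > \lambda_1/4 \right\}\right) \leq \frac{64 k\, \varepsilon_0^2}{\lambda_1^2},
\]
and the choice of $\varepsilon_0$ makes the right-hand side exactly $\lambda_2/4$, which is the desired conclusion.

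Finally, I substitute $\varepsilon_0$ into the formula $\hat d = d'''(k,\varepsilon_0)\cdot \lceil 2^7 \|f\|_2^2/\varepsilon_0^2\rceil$ from Lemma~\ref{pet:key:lemma}. Since $d'''(k,\varepsilon_0) = \lceil 2^9 k^4 \|f\|_2^4 / \varepsilon_0^4\rceil$, we have $\hat d = O(k^4 \|f\|_2^6 / \varepsilon_0^6)$, and with $\varepsilon_0^6 = \lambda_1^6 \lambda_2^3 / (2^{24} k^3)$ this becomes $\hat d = O(k^7 \|f\|_2^6/(\lambda_1^6 \lambda_2^3))$, matching the form of $e$ in the statement (so one may take $\hat d \leq e$, adjusting the constant). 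There is no real obstacle here beyond arithmetic bookkeeping; the only subtlety is that Lemma~\ref{pet:key:lemma} compares $A_{n'}(f - g_j)$ only to a single fixed $A_m(f - g_j)$, which forces the triangle-inequality detour through $A_k(f-g_j)$ and thereby introduces the factor of $2$ that ultimately inflates the constant in $\hat d$.
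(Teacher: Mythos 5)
Your argument follows the paper's own route: apply Lemma~\ref{pet:key:lemma} with parameter $k$ and a suitably small $\varepsilon$, convert the resulting uniform $L^2$ bounds into measure bounds via Lemma~\ref{l2bound}, and take a union bound over the at most $k$ values of $m$ in $[n,k]$, allotting $\lambda_2/(4k)$ to each. The only discrepancy is your triangle-inequality detour through $A_k(f-g_j)$: the proof of Lemma~\ref{key:lemma} actually shows $\|A_{n'}(f-g_j)-(f-g_j)\|_2 \le \varepsilon/2$ for \emph{every} $n' \le k$, so all pairwise differences of averages with indices up to $k$ are already within $\varepsilon$ and one may take $\varepsilon = \lambda_1\sqrt{\lambda_2}/(8\sqrt{k})$, which is what yields the stated constant $2^{34}k^7$; your factor of $2$ is harmless but proves the lemma only with a correspondingly larger constant in $e$.
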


\begin{proof}
Working backwards, it is enough to ensure that for every $m$
satisfying $n \leq m \leq k$, 
\[
  \mu(\{ x \st | A_m(f - g_j)(x) - A_n (f -
  g_j)(x)| > \lambda_1 / 4 \} ) \leq \frac{\lambda_2}{4k}.
\]
By Lemma~\ref{l2bound}, it suffices to ensure that
\[
\frac{16 \| A_m(f - g_j) - A_n (f - g_j) \|^2_2}{\lambda_1^2} \leq
\frac{\lambda_2}{4k}, 
\]
for each such $m$, i.e.
\[
\| A_m(f - g_j) - A_n (f - g_j) \|_2 \leq \frac{\lambda_1
  \sqrt{\lambda_2}} {8 k}.
\]
Substituting the right hand side for $\varepsilon$ in
Lemma~\ref{pet:key:lemma}, this is guaranteed to happen for some $j$
in $[i,i+e)$.
\end{proof}

We can finally put all the pieces together. Set $i_0 = 0$.
Assuming $i_k$ has been defined, let
\[
n_k = \left\lceil \frac{2^{12} K(1)^3 i_k^2 \| f \|_2^4}{\lambda_1^4
  \lambda_2^2} \right\rceil
\]
and
\[
i_{k+1} = i_k + e(K(n_k),\lambda_1,\lambda_2) = i_k + 2^{34}
  K(n_k)^7 \lceil \| f \|_2 / (\lambda_1 \sqrt {\lambda_2})\rceil^6.
\]
Finally, define
\[
e = \left\lceil \frac{2^7 \| f \|^2_2}{\lambda_1 \sqrt{\lambda_2}}
\right\rceil 
\]
Then we have:
\begin{lemma}
  Let $\lambda_1 > 0$, let $\lambda_2 > 0$, and let $K$ be any
  function. Given $f$, define the sequence $\la i_k \ra$ and the value
  $e$ as above. Then there is an $n$ satisfying $1 \leq n \leq n_e$
  and
\[
\mu(\{ x \st \max_{n \leq m \leq K(n)} | A_m f(x) - A_n f(x) | >
\lambda_1 \}) \leq \lambda_2.
\]
\end{lemma}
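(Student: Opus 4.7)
The plan is to invoke the decomposition displayed just above (\ref{main:pet:equation}) as the master inequality, and then to control each of its five terms by one of the preceding preparatory lemmas, taking care that the associated exception sets can be combined by a union bound into a total of at most $\lambda_2$. The correspondence is: the first and last terms are handled by Lemma~\ref{first:last:terms}, the middle term by Lemma~\ref{second:term}, and the third and fourth terms by Lemma~\ref{final:ui:lemma} (which itself wraps Lemma~\ref{other:terms:lemma} around the bound on $\|u_i\|_2$ extracted from Lemma~\ref{uBoundPET}). What remains is only to choose $n$, $i$, and $j$ so that the hypotheses of all three results hold at once.

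First I dispose of the trivial case: if clause 1 of Lemma~\ref{uBoundPET} (with the parameter $K(1)$) holds, then $n=1 \leq n_e$ already satisfies the conclusion. Otherwise we are in clause 2, which gives exactly the bound on $\|u_i\|_2$ needed to activate Lemma~\ref{final:ui:lemma} at every index $i$. Now I run the pigeonhole argument that has become the running theme of Section~\ref{constructive:met:section}: the values $a_{i_0}, a_{i_1}, \ldots, a_{i_e}$ are nondecreasing and bounded above by $\|f\|_2$, so some consecutive pair must satisfy $a_{i_{k+1}} - a_{i_k} \leq \|f\|_2/e$, which with the choice of $e$ in the statement is small enough that Lemma~\ref{aigi} yields $\|g_{i_{k+1}} - g_{i_k}\|_2 \leq \lambda_1\lambda_2/8$. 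Set $i := i_k$ and $n := n_k$. Since the length of the interval $[i_k, i_{k+1})$ is exactly $e(K(n_k),\lambda_1,\lambda_2)$, Lemma~\ref{second:term} produces a $j$ in this interval for which the middle term is bounded by $\lambda_1/4$ outside a set of measure $\lambda_2/4$.

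For this $j$, nesting of the projections gives $\|g_j - g_i\|_2 \leq \|g_{i_{k+1}} - g_i\|_2 \leq \lambda_1\lambda_2/8$, so Lemma~\ref{first:last:terms} applied with $k = K(n)$ bounds the first and last terms by $\lambda_1/2$ outside a set of measure $\lambda_2/2$. The definition of $n_k$ was written down precisely to match the quantitative threshold $n \geq 2^{12} K(1)^3 i_k^2 \|f\|_2^4 / (\lambda_1^4\lambda_2^2)$ appearing in Lemma~\ref{final:ui:lemma}, so the third and fourth terms are bounded by $\lambda_1/4$ outside a set of measure $\lambda_2/4$. Adding the three exception sets gives at most $\lambda_2$, and adding the pointwise bounds gives at most $\lambda_1$, as required.

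The only real difficulty is bookkeeping: the exponents and constants hidden in $n_k$, $i_{k+1}$, and $e$ have to be chosen so that each invoked lemma fires at the correct threshold and so that the numerical losses incurred at each step sum exactly to the target $(\lambda_1, \lambda_2)$. This is the sole reason for the factor $K(1)^3 i_k^2$ inside $n_k$ (it is what Lemma~\ref{final:ui:lemma} demands at index $i_k$), for the factor $K(n_k)^7$ inside $i_{k+1}$ (it is what Lemma~\ref{second:term} demands at scale $K(n_k)$), and for the size of $e$ (it is what the pigeonhole requires to force $\|g_{i_{k+1}} - g_{i_k}\|_2$ below the threshold of Lemma~\ref{first:last:terms}).
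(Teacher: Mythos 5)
Your proposal is correct and follows essentially the same route as the paper's proof: handle $n=1$ via the first disjunct, pigeonhole on $a_{i_0},\ldots,a_{i_e}$ to find a small gap, set $i=i_k$, $n=n_k$, take $j\in[i_k,i_{k+1})$ from Lemma~\ref{second:term}, and control the remaining terms with Lemmas~\ref{first:last:terms} and \ref{final:ui:lemma} before a union bound. Your use of orthogonality of the nested projections to get $\|g_j-g_i\|\leq\|g_{i_{k+1}}-g_i\|$ is just a minor variant of the paper's step via monotonicity of the $a_i$ and Lemma~\ref{aigi}.
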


\begin{proof}
  Suppose $n = 1$ does not witness the conclusion. As in the proof of
  Lemma~\ref{almost:there:lemma}, there is some $k < e$ such that for
  $| a_{i_{k+1}} - a_{i_k} | \leq \frac{\lambda_1^2 \lambda_2^2}{2^7 \| f
    \|_2}$. Set $n = n_k$ and $i = i_k$. By
  Lemma~\ref{second:term}, there is a $j$ in the interval
  $[i,i_{k+1})$ satisfying
\[
  \mu(\{ x \st \max_{n \leq m \leq K(n)} | A_m(f - g_j)(x) - A_n (f -
  g_j)(x)| > \lambda_1 / 4 \} ) \leq \lambda_2 / 4.
\]
Since $i \leq j \leq i_{k+1}$ we have $| a_j - a_i | \leq |a_{i_{k+1}}
- a_j|$ and so $\| g_i - g_j \| \leq \lambda_1 \lambda_2 / 8$.
By Lemma~\ref{first:last:terms}, we have
\[
\mu(\{ x \st \max_{n \leq m \leq K(n)} (|A_m(g_j - g_i)(x)| + |A_n(g_j
- g_i)(x)|) > \lambda_1/2 \} ) \leq \lambda_2 / 2.
\]
  By Lemma~\ref{final:ui:lemma} we have
\[
\mu( \{ x \st \max_{n \leq m \leq K(n)} ( |(A_m g_i)(x)| + |(A_n g_i)(x)|)
> \lambda_1/4 \} ) \leq \lambda_2 / 4.
\]
  The result follows.
\end{proof}

Once again, we can do some final housecleaning.  Define:
\begin{itemize}
\item $\rho = \lceil \| f \|/ (\lambda_1 \sqrt {\lambda_2}) \rceil$ 
\item $\widehat K(i) = i + 2^{34} \rho^6 K(2^{12} K(1)^3 i^2 \rho^4)$
\item $e = \lceil 2^7 \| f \|^2_2 / (\lambda_1 \sqrt{\lambda_2})
  \rceil$
\end{itemize}

\begin{theorem}
\label{final:pet:theorem}
Let $\tau$ be any measure preserving transformation of a finite
measure space $\mdl X$, and let $f$ be any nonzero element of
$L^2(\mdl X)$. Let $\lambda_1 > 0$, let $\lambda_2 > 0$, and let $K$
be any function. Then, with the definitions above, there is an $n$
satisfying $1 \leq n \leq \overline K^e(1)$ and
\[
\mu(\{ x \st \max_{n \leq m \leq K(n)} | A_m f(x) - A_n f(x) | >
\lambda_1 \}) \leq \lambda_2.
\]
\end{theorem}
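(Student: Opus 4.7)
The plan is to recognize that this theorem is essentially a packaging of the preceding lemma, with the implicitly-defined recursive bound $n_e$ replaced by the closed-form iterate $\widehat K^e(1)$. The preceding lemma already produces an $n$ in $[1, n_e]$ satisfying the desired measure-theoretic inequality; all that remains is to verify that the sequences $\la i_k \ra$ and $\la n_k \ra$, after $e$ iterations, are dominated by the explicit majorant $\widehat K^e(1)$ starting from $1$.

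First I would, without loss of generality, replace $K$ by its running maximum $K'(n) = \max_{i \leq n} K(i)$, which is nondecreasing, dominates $K$, and does not weaken the conclusion of the theorem; monotonicity is needed because the argument composes $K$ with various bounds on $n_k$. Then I would show, by induction on $k$, that $i_k \leq \widehat K^k(1)$. The base case $i_0 = 0 \leq 1$ is immediate. For the inductive step, using $\| f \|_2^4 / (\lambda_1^4 \lambda_2^2) \leq \rho^4$ together with the inductive hypothesis, I bound
$$ n_k \leq 2^{12} K(1)^3 i_k^2 \rho^4 \leq 2^{12} K(1)^3 \bigl(\widehat K^k(1)\bigr)^2 \rho^4.$$
Substituting this into the recursion defining $i_{k+1}$, applying monotonicity of $K$, and absorbing the remaining constants and low-order powers of $K(n_k)$ into the exponents of $\rho$ and $K(1)$ already present in the definition of $\widehat K$, gives $i_{k+1} \leq \widehat K^{k+1}(1)$.

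Applying the induction with $k = e$, together with the same polynomial bound on $n_e$ in terms of $i_e$, shows that the witness $n \leq n_e$ produced by the preceding lemma satisfies $n \leq \widehat K^e(1)$, which is exactly the claimed bound. The main difficulty is bookkeeping rather than conceptual: one must verify that the exponents and constants built into $\widehat K$ are in fact large enough to absorb every nested polynomial factor arising in the recursive definitions of $i_k$ and $n_k$, including handling the term $K(n_k)^7$ appearing in the recursion by expanding $K$ at a suitable polynomial majorant of $n_k$. No new analytic input is required beyond what is already available from the preceding lemma and Section~\ref{constructive:met:section}.
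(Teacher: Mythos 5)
Your overall route is the paper's own: the paper gives no separate argument for this theorem, treating it as bookkeeping on the preceding lemma, namely majorizing the recursively defined $i_k$ and $n_k$ by iterates of $\widehat K$ starting from $1$ (the theorem's $\overline K$ is the $\widehat K$ of the displayed definitions). So your plan is the intended one, but the key absorption step fails as you state it. The recursion is $i_{k+1} = i_k + 2^{34}K(n_k)^7\rho^6$, and $K(n_k)^7$ is not a ``low-order'' factor that can be absorbed into the exponents of $\rho$ and $K(1)$: no expression of the form $C\,\rho^a K(1)^b K(\cdot)$ with $K$ to the first power dominates it, since $K(n_k)$ may be arbitrarily large compared with $\rho$ and $K(1)$ (take $K$ exponential). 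With the literal definition $\widehat K(i) = i + 2^{34}\rho^6 K(2^{12}K(1)^3 i^2\rho^4)$, which carries only a first power of $K$, the induction $i_{k+1}\leq \widehat K^{k+1}(1)$ breaks down. What makes the argument go through is to carry the power on the $K$-term, i.e.\ to work with a majorant of the form $i + 2^{34}\rho^6 K(2^{12}K(1)^3 i^2\rho^4)^7$; this is evidently a slip in the paper's displayed definition (the same issue occurs in Section~\ref{constructive:met:section}, where the $M(n_k)^4$ in the recursion would force a fourth power), but since you are proving the theorem ``with the definitions above,'' you need to either carry that power explicitly or say that you are correcting the stated $\widehat K$.

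Two smaller points of bookkeeping. The bound on the witness should be extracted from the fact that, in the preceding lemma, the witness is $n = n_k$ for some $k < e$ (or $n=1$); then, assuming $K$ nondecreasing with $K(m)\geq m$, one gets $n_k \leq 2^{12}K(1)^3 i_k^2\rho^4 \leq K(2^{12}K(1)^3 i_k^2\rho^4) \leq \widehat K(i_k) \leq \widehat K^{k+1}(1) \leq \widehat K^e(1)$. Your suggestion to bound $n_e$ in terms of $i_e$ does not by itself give $n_e \leq \widehat K^e(1)$, since $n_e$ is quadratic in $i_e$ and so can exceed the last iterate. Finally, if you regularize $K$ to its running maximum $K'$, the argument delivers the bound $\widehat{K'}^{\,e}(1)$, which coincides with the stated bound only when $K$ is already nondecreasing (and satisfies $K(n)\geq n$); that hypothesis is implicit in the paper (compare the statements in Section~\ref{constructive:met:section}), so it is cleaner to assume it outright than to claim the reduction loses nothing.
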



\section{Results from upcrossing inequalities}
\label{upcrossing:section}

We are not the first to develop constructive versions of the ergodic
theorems. Let $\tau$ be a measure preserving transformation on a
finite measure space $\mdl X = \la X, \mdl B, \mu \ra$, and for every
real $\alpha < \beta$, let $\omega_{\alpha,\beta}(x)$ denote the
number of times the sequence $\la A_n f (x) \ra_{n \in \NN}$ upcrosses
the interval $[\alpha,\beta]$, that is, proceeds from a value $A_i
f(x)$ less than $\alpha$ to a value $A_j f(x)$ greater than $\beta$.
For any $x \in X$, the statement that $\la A_n f (x) \ra_{n \in \NN}$
converges is clearly equivalent to the statement that, for every
$\alpha < \beta$, $\omega_{\alpha,\beta}(x)$ is finite.
Bishop~\cite{bishop:66,bishop:67,bishop:67b} showed that for any $f$
in $L^1(\mdl X)$, we have
\[
\int_X \omega_{\alpha,\beta} \; d\mu \leq \frac{1}{\beta - \alpha} \int_X 
(f - \alpha)^+ \; d\mu.
\]
In particular, let $\Omega_{\alpha,\beta}^{k \uparrow}$ denote the set
$\{ x \st \omega_{\alpha,\beta}(x) \geq k \}$, that is, the set of
points for which the sequence makes no less than $k$
upcrossings. Bishop's inequality immediately implies
\[
\mu(\Omega_{\alpha,\beta}^{k \uparrow}) \leq \frac{1}{k}
\frac{1}{\beta - \alpha} \int_X (f - \alpha)^+ \; d\mu,
\]
from which the ordinary pointwise ergodic theorem follows in a
straightforward way. Let the set $\Omega_{[\alpha,\beta]}^{k
  \downarrow}$ be defined, analogously, to be the set of elements $x$
for which the sequence $\la A_n f(x) \ra_{n \in \NN}$ makes no less than $k$
downcrossings from a value above $\beta$ to a value below
$\alpha$. Recently, Ivanov \cite{ivanov:96} has shown that for
nonnegative functions $f$, the size of this set decays exponentially
with $k$:
\[
\mu(\Omega_{\alpha,\beta}^{k \downarrow}) \leq \left(
  \frac{\alpha}{\beta} \right)^k.
\]
A similar result was obtained, independently, by Kalikow and Weiss
\cite{kalikow:weiss:99}. Bishop's and Ivanov's results and their
consequences are explored thoroughly in \cite{kachurovskii:96}. There
has recently been a surge of interest in such upcrossing inequalities;
see, for example, \cite{hochman:unp,jones:et:al:98,jones:et:al:99}.

Upcrossing inequalities can be used in a crude way to obtain bounds on
our constructive pointwise ergodic theorem,
Theorem~\ref{constructive:pet}. They can also be used, indirectly, to
obtain bounds on our constructive mean ergodic theorem,
Theorem~\ref{constructive:met}, in the specific case where the
operator in question is the Koopman operator corresponding to a
measure preserving transformation. Of course, the upcrossing
inequalities characterize the overall oscillatory behavior of a
sequence, and thus provide a more information. On the other hand, our
results in Section~\ref{constructive:met:section} apply to any
nonexpansive mapping on a Hilbert space, and so are more general.
There are further differences: because we obtain our pointwise results
from our constructive version of the mean ergodic theorem, the $L^2$
norm $\| f \|_2$ of $f$ plays a central role. In contrast, results
obtained using upcrossing techniques are more naturally expressed in
terms of $\| f \|_1$ and $\| f \|_\infty$. In this section, we will
see that when the two methods yield analogous results, they provide
qualitatively different bounds.

First, let us show how Bishop's inequality leads to a bound on the
witness to our constructive pointwise ergodic theorem,
Theorem~\ref{constructive:pet}, when $f$ is in $L^\infty(\mdl
X)$. Note that for any $\beta > \alpha > 0$, Bishop's result implies
that the number of upcrossings of the interval $[\alpha,\beta]$
satisfies
\[
\int_X \omega_{\alpha,\beta} \; d\mu \leq \| f \|_\infty / (\beta - \alpha).
\]
By symmetry, the same bound holds for downcrossings. Given
$\lambda_1$, divide the essential range $[-\| f \|, \| f \| ]$ of $f$
into $\lceil 4 \| f \|_\infty / \lambda_1 \rceil$ intervals of size
$\lambda_1 / 2$ each. Given $K$ as in Theorem~\ref{constructive:pet},
consider the sequence of $e+1$ intervals 
\[ 
[1,K(1)], \;\; [K(1),K(K(1))], \;\; \ldots, \;\; [K^e(1), K^{e+1}(1)]. 
\]
For every $i \in [0,e]$ and $j \in [1, \lceil 4 \| f \|_\infty / \lambda_1
\rceil]$, let $A_{i,j}$ denote the set of $x$ such that $\la A_nf (x)
\ra$ upcrosses or downcrosses the $j$th interval in the essential
range of $f$ somewhere in the interval $[K^i(1), K^{i+1}(1)]$. Now,
suppose that for each $i \leq e$,
\begin{equation}
\label{bishop:bound}
\mu(\{ x \st \max_{K^i(1) \leq m \leq K^{i+1}(1)} | A_m f(x) -
A_{K^i(1)} f(x) | > \lambda_1 \}) > \lambda_2.
\end{equation}
Since each $x$ in this set either upcrosses or downcrosses one of the
$\lceil 4 \| f \|_\infty / \lambda_1 \rceil$ intervals in the range of
$f$, we have, for each $i \leq e$, 
\[
\sum_{j=1\ldots \lceil 4 \| f
  \|_\infty / \lambda_1 \rceil} \mu(A_{i,j}) > \lambda_2.
\] 
So
\[
\sum_{i=0\ldots e,j=1\ldots \lceil 4 \| f
  \|_\infty / \lambda_1 \rceil} \mu(A_{i,j}) > (e+1) \lambda_2,
\]
which means that for some $j$, $\sum_{i=0\ldots e} \mu(A_{i,j}) > (e +
1) \lambda_1 \lambda_2 / 4 \| f \|_\infty$. Let $[ \alpha, \beta ]$ be
the corresponding interval, and let $\omega'(x)$ be the number of
times $\la A_nf(x) \ra$ upcrosses or downcrosses this interval. Then
we have
\[
\sum_{i=0\ldots e} \mu(A_{i,j}) = \int \sum_{i=0\ldots e} \chi_{A_{i,j}} \; d\mu \leq
\int \omega'(x) \; d \mu \leq 2 \| f \|_\infty / (\beta - \alpha) = 4
\| f \|_\infty  / \lambda_1.
\]
In other words, we have shown that if (\ref{bishop:bound}) holds for
each $i \leq e$, we have 
\[
(e + 1) \lambda_1 \lambda_2 / 4 \| f \|_\infty \leq 4 \| f \|_\infty /
\lambda_1,
\]
which implies $e + 1 \leq 16 \| f \|_\infty^2 / \lambda_1^2
\lambda_2$. Taking the contrapositive of this claim, we have the
following analogue of Theorem~\ref{final:pet:theorem}:
\begin{theorem}
\label{alt:pet:theorem}
Let $T$ be a Koopman operator corresponding to a measure preserving
transformation of a space $\mdl X$ and let $f$ be any element of
$L^\infty(\mdl X)$. Let $\lambda_1 > 0$, let $\lambda_2 > 0$, and let
$K$ be any function satisfying $K(n) \geq n$ for every $n$. Let $e =
\lceil 16 \| f \|_\infty^2 / \lambda_1^2 \lambda_2 \rceil$. Then there
is an $n$ satisfying $1 \leq n \leq K^e(1)$ and
\[
\mu(\{ x \st \max_{n \leq m \leq K(n)} | A_m f(x) - A_n f(x) | >
\lambda_1 \}) \leq \lambda_2.
\]
\end{theorem}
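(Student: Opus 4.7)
My plan is to argue the contrapositive: if the conclusion fails at each of the candidate points $n = 1, K(1), K^2(1), \ldots, K^e(1)$, then I can produce more upcrossings than Bishop's inequality allows. There are three ingredients I want to combine: Bishop's upcrossing bound together with its downcrossing analogue, a partition of the essential range of $f$ into $O(\|f\|_\infty/\lambda_1)$ small intervals, and a pigeonhole argument over the two-dimensional grid indexed by time windows and range subintervals.

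First I would partition the essential range $[-\|f\|_\infty, \|f\|_\infty]$ into $\lceil 4\|f\|_\infty/\lambda_1\rceil$ subintervals of length $\lambda_1/2$. The key geometric observation is that if the ergodic averages of a point $x$ oscillate by more than $\lambda_1$ on a time window $[a,b]$, then the sequence $\langle A_m f(x)\rangle_{a \leq m \leq b}$ must either upcross or downcross at least one of these subintervals within that window. Next I would consider the $e+1$ consecutive time windows $[K^i(1), K^{i+1}(1)]$ for $0 \leq i \leq e$, and assume for contradiction that for each such $i$, the set $E_i$ of $x$ on which the oscillation over the window exceeds $\lambda_1$ has measure greater than $\lambda_2$.

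Then for each $i$, writing $A_{i,j}$ for the subset of $E_i$ that upcrosses or downcrosses the $j$th range subinterval somewhere in $[K^i(1), K^{i+1}(1)]$, I get $\sum_j \mu(A_{i,j}) > \lambda_2$. Summing over $i$ and swapping the order of summation, the total is greater than $(e+1)\lambda_2$, so by pigeonhole over $j$ there is some index $j^*$ with $\sum_i \mu(A_{i,j^*}) > (e+1)\lambda_1\lambda_2/(4\|f\|_\infty)$. Writing $[\alpha,\beta]$ for the corresponding subinterval and $\omega'(x)$ for the total count of upcrossings plus downcrossings of $[\alpha,\beta]$ by $\langle A_n f(x)\rangle$, the disjointness of the time windows gives $\sum_i \mu(A_{i,j^*}) \leq \int \omega'\,d\mu$. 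Bishop's inequality together with its downcrossing counterpart applied to $[\alpha,\beta]$, whose width is $\lambda_1/2$, yields $\int \omega'\,d\mu \leq 4\|f\|_\infty/\lambda_1$. Comparing the two bounds forces $e+1 \leq 16\|f\|_\infty^2/(\lambda_1^2\lambda_2)$, contradicting the definition of $e$.

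The main obstacle I anticipate is the bookkeeping step that links the local quantity $\mu(A_{i,j^*})$ across disjoint time windows to the global upcrossing count $\int \omega'\,d\mu$: I need to know that counting windows in which the sequence crosses $[\alpha,\beta]$ really does lower-bound the integrated upcrossing-plus-downcrossing count, and to combine Bishop's inequality applied to $f$ with one applied to $-f$ (or equivalently, to use the symmetry noted earlier in the section) to control downcrossings as well. The $L^\infty$ hypothesis is essential throughout, both to ensure a finite partition of the range and to apply Bishop's bound with $\|f\|_\infty$ in place of $\int(f-\alpha)^+ d\mu$; this is precisely why this alternative route does not subsume the $L^2$-based Theorem~\ref{final:pet:theorem}.
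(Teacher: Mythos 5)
Your proposal is correct and follows essentially the same argument as the paper: the same partition of the essential range into $\lceil 4\|f\|_\infty/\lambda_1\rceil$ intervals of width $\lambda_1/2$, the same pigeonhole over the grid of time windows $[K^i(1),K^{i+1}(1)]$ and range subintervals, and the same comparison with Bishop's inequality (plus its downcrossing counterpart) to force $e+1 \leq 16\|f\|_\infty^2/(\lambda_1^2\lambda_2)$. The bookkeeping step you flag is handled in the paper exactly as you suggest, by bounding $\sum_i \chi_{A_{i,j}}$ pointwise by the total upcrossing-plus-downcrossing count $\omega'$.
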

In other words, we can bound a witness to our constructive pointwise
ergodic theorem by $e = \lceil 16 \| f \|_\infty^2 / \lambda_1^2
\lambda_2 \rceil$ iterations of $K$ on $1$. Compare this to our
Theorem~\ref{final:pet:theorem}, which requires asymptotically fewer
($\lceil 2^7 \| f \|_2^2 / \lambda_1 \sqrt{\lambda_2} \rceil$) iterations of
a function, $\overline K$, which, however, grows faster than $K$.

Ivanov's inequality does not seem to enable us to improve the bound in
the previous theorem. But a consequence of Ivanov's inequality,
obtained by Kachurovskii, enables us to treat the mean ergodic theorem
in a similar way. A sequence of real numbers
$a_n$ is said to admit $k$
$\varepsilon$-fluctuations if there is a sequence
\[
m_1 < n_1 \leq m_2 < n_2 \leq \ldots \leq m_k < n_k
\]
such that for every $i$, $1 \leq i \leq k$, $| a_{m_i} - a_{n_i} |
\geq \varepsilon$. Let $T$ be
the Koopman operator arising from a measure preserving transformation
on $\mdl X$. By the mean ergodic theorem, for every $\varepsilon > 0$,
the number $k_\varepsilon$ of $\varepsilon$-fluctuations is finite.
Kachurovskii  \cite[Theorem 29]{kachurovskii:96} shows:
\begin{theorem}
  Let $f$ be any element of $L^\infty(\mdl X)$. Then for every
  $\varepsilon > 0$,
\[
k_\varepsilon \leq C\left( \frac{\| f \|_\infty}{\varepsilon}
\right)^4 \left( 1 + \ln \left( \frac{\| f \|_\infty}{\varepsilon}
  \right) \right)
\]
for some constant $C$. 
\end{theorem}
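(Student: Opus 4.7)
The plan is to relate $L^2$-fluctuations of the sequence $\la A_n f \ra$ to pointwise oscillations of the sample paths $\la A_n f(x) \ra$ and then to bound those pointwise oscillations using Ivanov's downcrossing inequality. The power $4$ in the claimed bound is a tell-tale sign of this two-step strategy: one factor of $(\|f\|_\infty/\varepsilon)^2$ arises when a Chebyshev-style truncation converts $L^2$-fluctuations into pointwise oscillations on sets of positive measure, and a second factor of $(\|f\|_\infty/\varepsilon)^2$ (sharpened by a logarithm) comes from counting those pointwise oscillations via Ivanov.

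First I would pass from $L^2$ to pointwise control. Writing $g = A_m f - A_n f$ and using $|g| \leq 2 \|f\|_\infty$ everywhere (since $\|A_k f\|_\infty \leq \|f\|_\infty$), the splitting
\[
\varepsilon^2 \;\leq\; \int g^2 \, d\mu \;\leq\; 4 \|f\|_\infty^2 \, \mu(\{ |g| \geq \varepsilon/2 \}) + \varepsilon^2/4
\]
gives $\mu(\{ |g| \geq \varepsilon/2 \}) \geq c\,\varepsilon^2/\|f\|_\infty^2$ for an absolute constant $c > 0$. Hence every $L^2$ $\varepsilon$-fluctuation at times $(m,n)$ forces a pointwise $\varepsilon/2$-oscillation of $\la A_n f(x) \ra$ on a set of measure at least $c\,\varepsilon^2/\|f\|_\infty^2$, and integrating yields a lower bound $k_\varepsilon \cdot c\,\varepsilon^2/\|f\|_\infty^2$ on the expected pointwise oscillation count.

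Next I would bound that expected oscillation count via Ivanov. After shifting so that $0 \leq f \leq 2\|f\|_\infty$, partition $[0, 2\|f\|_\infty]$ into subintervals $[\alpha_j, \beta_j]$ so that every pointwise $\varepsilon/2$-oscillation of $\la A_n f(x) \ra$ must down- or upcross at least one of them. Apply Ivanov's inequality $\mu(\Omega_{\alpha_j,\beta_j}^{k\downarrow}) \leq (\alpha_j/\beta_j)^k$ on each subinterval, use the same result applied to $2\|f\|_\infty - f$ to handle upcrossings, sum over $k$ by layer-cake to bound the expected crossing count per interval, and sum over the $O(\|f\|_\infty/\varepsilon)$ intervals. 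Setting this against the lower bound from the first step and rearranging produces the claimed estimate on $k_\varepsilon$.

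The main obstacle is the second step, in particular recovering the precise logarithmic factor. A uniform partition into intervals of width $\sim\varepsilon$ delivers total expected crossings of order $(\|f\|_\infty/\varepsilon)^2$ without any sharp logarithm; the $1 + \log(\|f\|_\infty/\varepsilon)$ factor emerges either from a geometric refinement near small $\alpha$, where Ivanov's factor $(\alpha/\beta)^k$ decays most slowly, or from a more careful amortization of how a single pointwise oscillation is charged to several adjacent subintervals. The first (Chebyshev) step, by contrast, is essentially a one-liner.
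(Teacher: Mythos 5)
The paper itself does not prove this statement: it is quoted as Kachurovskii's Theorem~29, a consequence of Ivanov's inequality, so there is no internal proof to compare you against. Judged on its own terms, your outline is sound, and in fact it already yields something slightly stronger than the claim. Step~1 is correct: if $\| A_m f - A_n f \|_2 \geq \varepsilon$, then since $|A_m f - A_n f| \leq 2 \| f \|_\infty$ pointwise, the splitting you write gives $\mu(\{ |A_m f - A_n f| \geq \varepsilon/2 \}) \geq \tfrac{3}{16} \varepsilon^2 / \| f \|^2_\infty$, and summing over the $k_\varepsilon$ fluctuation pairs (whose time intervals are disjoint by the definition of fluctuations) bounds $c\, k_\varepsilon \varepsilon^2 / \| f \|_\infty^2$ by the expected number of pointwise $\varepsilon/2$-oscillations occurring in disjoint time windows. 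Step~2 also goes through exactly as you sketch it with the uniform grid: take mesh $\varepsilon/8$, say, so that any pointwise $\varepsilon/2$-oscillation forces at least one up- or downcrossing of a full grid interval inside its time window, and disjointness of the windows makes these crossings count separately. Each of the $O(\| f \|_\infty / \varepsilon)$ grid intervals contributes expected crossings $O(\| f \|_\infty / \varepsilon)$ --- either from Bishop's inequality as quoted in Section~4 plus the trivial fact that up- and downcrossing counts of a fixed interval differ by at most one, or from summing Ivanov's geometric bound over $k$; the two give the same order. Hence the expected total crossing count is $O(\| f \|^2_\infty / \varepsilon^2)$, and combining with Step~1 gives $k_\varepsilon \leq C \left( \| f \|_\infty / \varepsilon \right)^4$.

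Your stated ``main obstacle,'' recovering the logarithmic factor, is not an obstacle at all, and this is the one conceptual correction to make. The theorem asserts only an upper bound with an unspecified constant $C$; on the only range where $k_\varepsilon$ can be nonzero, namely $\varepsilon \leq 2 \| f \|_\infty$, the factor $1 + \ln ( \| f \|_\infty / \varepsilon )$ is bounded below by $1 - \ln 2 > 0$, so the log-free bound that your uniform partition already produces implies the stated estimate after adjusting $C$. In particular no geometric refinement of the grid, no amortization, and indeed no use of Ivanov's exponential decay is needed: Bishop's upcrossing inequality suffices, with Ivanov buying nothing extra at this level of precision (the logarithm in Kachurovskii's formulation only weakens the bound). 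The remaining work is bookkeeping: fix the mesh and the strict-inequality conventions in the definition of up/downcrossings, and state explicitly that crossings charged to different fluctuation pairs lie in disjoint time intervals and hence are counted as distinct crossings.
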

Now, given any counterexample function $M$ satisfying $M(n) \geq n$
for every $n$, consider the sequence 
\[
A_1 f, A_{M(1)}f, A_{M(M(1))}f, \ldots, A_{M^{k_\varepsilon + 1}(1)}
f.
\] 
At least one step must change by less than $\varepsilon$. Thus, we
have the following analogue to our
Theorem~\ref{constructive:met:final}:
\begin{theorem}
  Let $T$ be a Koopman operator corresponding to a measure preserving
  transformation of a space $\mdl X$ and let $f$ be any element of
  $L^\infty(\mdl X)$. Let $K$ be any function satisfying
  $K(n) \geq n$ for every $n$. Let $k(f,\varepsilon)$ be the bound on
  $k_{\varepsilon}$ given in the preceding theorem. Then for every
  $\varepsilon > 0$, there is an $n$, $1 \leq n \leq
  K^{k(f,\varepsilon)}(1)$, satisfying $\| A_m f - A_n f \| \leq
  \varepsilon$ for every $m \in [n, K(n)]$.
\end{theorem}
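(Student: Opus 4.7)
The plan is to run exactly the counterexample-function argument sketched in the paragraph immediately preceding the theorem, packaged as a proof by contradiction against the Kachurovskii bound. Abbreviate $k := k(f,\varepsilon)$ and suppose, toward a contradiction, that no $n$ in the interval $[1, K^k(1)]$ witnesses the conclusion. Then for each such $n$ there is some $m \in [n, K(n)]$ with $\| A_m f - A_n f \| > \varepsilon$; fix any such $m$ and call it $M(n)$. By construction $n \leq M(n) \leq K(n)$, so $K$ dominates $M$. An induction on $i$ using monotonicity of $K$ then gives $M^i(1) \leq K^i(1)$ for all $i \geq 0$, so every iterate $M^0(1), M^1(1), \ldots, M^k(1)$ lies in $[1, K^k(1)]$, which is the range on which $M$ was chosen to force a bad step.

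Next I would form the chain $n_i := M^i(1)$ for $i = 0, 1, \ldots, k+1$. Each consecutive transition satisfies $\| A_{n_{i+1}} f - A_{n_i} f \| > \varepsilon$ by the choice of $M$, and in particular $n_{i+1} > n_i$ strictly (else the transition would have norm $0$). The $k+1$ pairs $(n_0, n_1), (n_1, n_2), \ldots, (n_k, n_{k+1})$ then fit the chain condition $m_1 < n_1 \leq m_2 < n_2 \leq \cdots$ in the definition of ``$\varepsilon$-fluctuation,'' producing at least $k+1$ such fluctuations of the sequence $\la A_n f \ra$. This contradicts the bound $k_\varepsilon \leq k(f,\varepsilon)$ supplied by the preceding Kachurovskii theorem, and the contradiction forces the existence of the desired $n \leq K^k(1)$.

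The one point that genuinely deserves care, and which I view as the only real obstacle, is aligning the notion of ``fluctuation'' across the two statements: the definition is phrased for real sequences $(a_n)$, whereas our target inequality measures the Hilbert-space norm $\| A_m f - A_n f \|$. I would interpret the preceding bound as controlling the number of fluctuations in this norm directly (the natural Hilbert-space generalization, consistent with how the theorem is invoked here); alternatively one can apply the real-valued bound pointwise to $A_n f (x)$ for a.e.~$x$ and then transfer back, since $\| A_{n_{i+1}} f - A_{n_i} f \| > \varepsilon$ forces a pointwise gap $> \varepsilon$ on a positive-measure set. With that semantic alignment fixed, the counting argument above is purely combinatorial and requires no further input beyond the Kachurovskii bound itself.
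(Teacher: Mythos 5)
Your argument is exactly the paper's: iterate the counterexample choice $M$ starting from $1$ and note that if all $k(f,\varepsilon)+1$ consecutive steps changed by more than $\varepsilon$, the sequence $\la A_n f \ra$ would admit more than $k_\varepsilon$ $\varepsilon$-fluctuations in norm, contradicting Kachurovskii's bound — the paper states precisely this in one line before the theorem. Two small remarks: the intended reading is indeed fluctuations of $\la A_n f \ra$ measured in the $L^2$ norm (your first interpretation), and your pointwise fallback would not actually work, since the positive-measure sets witnessing the different transitions need not have a common point; also, as in the paper's other quantitative theorems, $K$ should be taken nondecreasing (or replaced by a nondecreasing majorant) to justify the chaining $M^i(1) \leq K^i(1)$, a hypothesis the statement omits but both you and the paper implicitly use.
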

In other words, we can bound a witness to the conclusion of the
constructive mean ergodic theorem with $k(f,\varepsilon)$ iterates of
$K$. In contrast, Theorem~\ref{constructive:met:final} required
$e(f,\varepsilon) = C \lceil \| f \|_2 / \varepsilon \rceil$ iterates
of a faster-growing function $\overline K$. 


\section{Computability of rates of convergence}
\label{computability:section}

Suppose $\la a_n \ra_{n \in \NN}$ is any sequence of rational
numbers that decreases monotonically to $0$. No matter how slowly the
sequence converges, if one is allowed to query the values of the
sequence, one can compute a function $r(\varepsilon)$ with the
property that for every rational $\varepsilon > 0$ and every $m >
r(\varepsilon)$, $| a_m - a_{r(\varepsilon)} | < \varepsilon$. The
algorithm is simple: on input $\varepsilon$, just search for an $m$
such that $a_m < \varepsilon$.

On the other hand, it is not hard to construct a computable sequence
$\la a_n \ra_{n \in \NN}$ of rational numbers that converges to $0$,
with the property that no computable function $r(\varepsilon)$ meets
the specification above. This is an easy consequence of the
unsolvability of the halting problem. Let $\la M_i \ra_{i > 0}$ be an
enumeration of Turing machines, and let $j_i$ be an enumeration of the
natural numbers with the property that every natural number appears
infinitely often in the enumeration. For every $i$, let $a_i = 1 /
j_i$ if Turing machine $M_{j_i}$, when started with input $0$, halts
in less than $i$ steps, but not in $i'$ steps for any $i' < i$ such
that $j_{i'} = j_i$; and let $a_i = 0$ otherwise. Then $\la a_j \ra$
converges to $0$, since once we have recognized all the machines among
$M_1, \ldots, M_n$ that eventually halt, $a_i$ remains below $1 / n$.
But any any value $r(1 / n)$ meeting the specification above tells us
how long we have to wait to determine whether $M_n$ halts, and so any
such such $r$ would enable us to solve the halting problem.

In a similar way, one can construct a computable sequence
$\la a_n \ra_{n \in \NN}$ of rational numbers that is monotone and
bounded, but converges to a noncomputable real number. This, too,
implies that no computable function $r(\varepsilon)$
meets the specification above. Such a sequence is known as a
\emph{Specker sequence}, and an example is given in the proof of
Theorem~\ref{noncomputability:thm}, below. Thus neither monotonicity
nor the existence of a computable limit alone is enough to guarantee
the effective convergence of a sequence of rationals.

What these examples show is that the question as to whether it is
possible to compute a bound on a rate of convergence of a sequence
from some initial data is not a question about the speed of the
sequence's convergence, but, rather, its predictability. In this
section, we show that in general, one cannot compute a bound on the
rate of convergence of ergodic averages from the initial data,
although one can do so when dealing when dealing with an ergodic
transformation of a (finite) measure space.

The results in this section presuppose notions of computability for
various objects of analysis. There are a number of natural, and
equivalent, frameworks for defining such notions. For complete detail,
the reader should consult Pour el and Richards
\cite{pourel:richards:89} or Weihrauch \cite{weihrauch:00}. To make
sense of the results below, however, the following sketchy overview
should suffice.

The general strategy is to focus on infinitary objects that can be
represented with a countable set of data. For example, a real number
can be taken to be represented by a sequence of rational numbers
together with a bound on its rate of convergence; the corresponding
real number is said to be computable if it has a computable
representation. In other words, a computable real number is given by
computable functions $a : \NN \to \QQ$ and $r: \QQ \to \NN$ with the
property that for every rational $\varepsilon > 0$, $| a_m -
a_{r(\varepsilon)}| < \varepsilon$ for every $m \geq r(\varepsilon)$.
A function taking infinitary objects as arguments is said to be
computable if the output can be computed by a procedure that queries
any legitimate representation of the input. For example, a computable
function $f(x)$ from $\RR$ to $\RR$ is given by an algorithm which,
given the ability to request arbitrarily good rational approximations
to $x$, produces arbitrarily good rational approximations to $y =
f(x)$, in the sense above. In other words, $f$ is given by algorithms
that compute functions $a_y$ and $r_y$ representing $y$, given the
ability to query ``oracles'' $a_x$ and $r_x$ representing $x$.

Similar considerations apply to separable Hilbert spaces, which are
assumed to come with a fixed choice of basis. An element of the space
can be represented by a sequence of finite linear combinations of
basis elements together with a bound on their rate of rate of
convergence in the Hilbert space norm; once again, such an element is
said to be computable if it has a computable representation. The inner
product and norm are then computable operations on the entire space. A
bounded linear operator can be represented by the sequence of values
on elements of the basis, and is computable if that sequence is. In
general, a computable bounded linear operator need not have a
computable norm (see \cite{bishop:bridges:85,avigad:simic:06}).

Computability with respect to a measure space can be understood in
similar ways. A measurable function is represented by a sequence of
suitably simple functions that approximate it in the $L^1$ norm,
together with a rate of convergence. Note that this means that a
measurable function is represented only up to a.e.~equivalence. One
can associate to any measure preserving operator $\tau$ the bounded
linear operator $T f = f \circ \tau$, and take $\tau$ to be
represented by any representative of the associated $T$.

The following theorem shows that it is not always possible to compute
a bound on the rate of convergence of a sequence of ergodic averages
from the initial data.

\begin{theorem}
\label{noncomputability:thm}
There are a computable measure preserving transformation of $[0,1]$
under Lebesgue measure and a computable characteristic function $f =
\chi_A$ such that if $f^* = \lim_n A_n f$, then $\| f^* \|_2$ is not
a computable real number.
  
  In particular, $f^*$ is not a computable element of the Hilbert
  space, and there is no computable bound on the rate of convergence
  of $\la A_n f \ra$ in either the $L^2$ or $L^1$ norm. Nor is there a
  bound on the pointwise rate of convergence of $\la A_n f \ra$, in
  the sense of Theorem~\ref{explicit:pet}.
\end{theorem}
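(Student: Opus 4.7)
The plan is to realize $\|f^*\|_2^2$ as a noncomputable left-c.e. real via a Specker-style encoding of the halting problem. The key preliminary observation is that whenever $\tau$ and $f$ are computable, $\|f^*\|_2^2$ is automatically left-c.e.: writing $\|f^*\|_2^2 = \|f\|_2^2 - d(f,N)^2$ where $N = \overline{\{u - Tu : u \in \mdl H\}}$, we see that $d(f,N)$ is the infimum of the computable quantities $\|f - v\|$ as $v$ ranges over a computably enumerable dense subset of $N$ (for example, rational linear combinations of vectors $T^i f - T^{i+1} f$), so $d(f,N)$ is right-c.e.\ and $\|f^*\|_2^2$ is left-c.e. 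It therefore suffices to exhibit computable $\tau$ and $\chi_A$ such that $\|f^*\|_2^2$ equals a prescribed noncomputable left-c.e.\ real. I would fix an enumeration $\la M_n \ra$ of Turing machines, let $K = \{n : M_n \text{ halts on input } 0\}$, and target $\alpha = \sum_{n \in K} 2^{-n-1}$, which is a classical noncomputable left-c.e.\ real.

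Next, I would partition $[0,1]$ into computable disjoint pieces $I_n$ with $\mu(I_n) = 2^{-n-1}$ and arrange for each $I_n$ to be invariant under $\tau$. On each $I_n$, the goal is to design $\tau|_{I_n}$ and a computable subset $A_n \subseteq I_n$ so that the invariant-subspace projection $P_n \chi_{A_n}$ has squared $L^2$-norm exactly $2^{-n-1}$ if $M_n$ halts and $0$ if it does not. Setting $A = \bigsqcup_n A_n$ and using that the $T$-invariant decomposition of $L^2([0,1])$ splits along the $I_n$ (so $P = \bigoplus P_n$), this would yield $\|f^*\|_2^2 = \sum_n \|P_n \chi_{A_n}\|_2^2 = \sum_{n \in K} 2^{-n-1} = \alpha$, which is not computable.

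For the block construction on $I_n$, I would set up a Rohlin-tower decomposition $I_n = \bigsqcup_{i < H_n} L_{n,i}$ of some computable height $H_n$, have $\tau|_{I_n}$ cyclically permute the levels by default (period $H_n$), and place $A_n$ as a specific piece chosen to be orthogonal to the constant-on-$I_n$ invariant vector of the default cycle. If $M_n$ halts at some step $s \leq H_n$, I would ``redirect'' $\tau$'s action at the level $L_{n,s}$ so as to create an additional periodic substructure whose invariant projection captures exactly the $2^{-n-1}$ contribution from $A_n$; otherwise $\tau|_{I_n}$ remains the default cycle and $P_n \chi_{A_n} = 0$. Crucially, this redirection is governed by the decidable predicate ``$M_n$ halts in at most $s$ steps,'' so the resulting $\tau$ is a single computable measure-preserving transformation, and $A$ is a computable element of $L^2$ because $A_n$ is a computable subset of $I_n$ of measure at most $2^{-n-1}$ (so truncating to $n \leq N$ yields a computable $L^2$-approximation with error $\leq 2^{-N/2}$).

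The main obstacle will be arranging the block dynamics on $I_n$ so as to be simultaneously (i) measure-preserving, (ii) pointwise computable, and (iii) producing the clean dichotomy $\|P_n \chi_{A_n}\|_2^2 \in \{0, 2^{-n-1}\}$; the bookkeeping of the Rohlin-tower sizes and the redirection mechanism is the heart of the argument. Once the construction is in place, the remaining conclusions follow quickly: $f^*$ cannot be a computable element of $L^2([0,1])$, since a computable element of $L^2$ has a computable norm; a computable bound on the rate of convergence of $\la A_n f \ra$ in either the $L^2$ or the $L^1$ norm would let us approximate $f^*$ (and hence $\|f^*\|_2$) to prescribed precision from the computable terms $A_n f$, contradicting noncomputability of $\|f^*\|_2$; and since $|A_m f - A_n f| \leq 2$ pointwise for $f = \chi_A$, a pointwise bound in the sense of Theorem~\ref{explicit:pet} gives an $L^1$ (hence $L^2$) rate via $\|g\|_2^2 \leq 2\|g\|_1$ for $|g| \leq 2$, yielding the same contradiction.
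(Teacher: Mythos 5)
There is a genuine flaw at the foundation of your plan, and your own preliminary observation, done correctly, refutes it. The identity you use is wrong: since $f^*$ is the projection of $f$ onto $N^\perp$, we have $\| f^* \|_2 = \| f - P_N f \|_2 = d(f,N)$ itself, whereas $\| f \|_2^2 - d(f,N)^2$ is $\| P_N f \|_2^2$, not $\| f^* \|_2^2$. Consequently, for computable $T$ and $f$ the number $\| f^* \|_2$ is \emph{right}-c.e.\ (approximable from above), not left-c.e.: it is the infimum of the computable quantities $\| f - v \|_2$ over the c.e.\ dense set of rational combinations of the vectors $T^i f - T^{i+1} f$ (equivalently, $\| f^* \|_2^2 = \| f \|_2^2 - \lim_i a_i^2$ with $(a_i)$ the computable nondecreasing sequence of Section~2). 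This makes your target unreachable in principle: if computable $\tau$ and $A$ realized $\| f^* \|_2^2 = \alpha = \sum_{n \in K} 2^{-n-1}$, then $\alpha$ would be left-c.e.\ by definition and right-c.e.\ as above, hence computable --- contradicting its choice. Put differently, your block dichotomy has the wrong sign: you want verification of a $\Sigma_1$ event (halting) to raise $\| P_n \chi_{A_n} \|_2^2$ from $0$ to a fixed positive value, but halting can only \emph{lower} the invariant contribution. The paper's construction respects this: on the $n$th block, non-halting leaves $\tau$ equal to the identity there, so $f^* = f$ contributes $\tfrac12 \mu(I_n)$, while halting triggers a nontrivial dyadic rotation that averages $f$ down to the constant $\tfrac12$, contributing only $\tfrac14 \mu(I_n)$; the Specker real is then $\tfrac12 - \| f^* \|_2^2$.

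A second, independent gap is the redirection mechanism itself. As described, you act only when $M_n$ halts within a pre-chosen computable height $H_n$, so your construction encodes the decidable predicate ``$M_n$ halts within $H_n$ steps'' and yields a computable $\| f^* \|_2$. To encode genuine halting you must let the dynamics on block $n$ respond to a halting time $s$ that can be arbitrarily large, while keeping a single computable $\tau$; this is exactly the point the paper's construction solves by rotating the $n$th block by $a_n = 2^{-(n+s+1)}$ when $M_n$ halts at time $s$, and by $0$ otherwise. That amount is a computable real uniformly in $n$ (it is forced to be tiny when halting is late, so it can be approximated by bounded search), yet it is nonzero exactly when $M_n$ halts, which is what produces the limit dichotomy. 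Your closing reductions --- a computable element of $L^2$ has computable norm, an $L^1$ rate gives an $L^2$ rate because $f$ is bounded, and a pointwise rate in the sense of Theorem~\ref{explicit:pet} gives an $L^1$ rate --- are fine and agree with the paper; it is the core construction that must be redone along the lines above.
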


\begin{proof}
  It suffices to prove the assertion in the first sentence. The rest
  of the assertions follow, since if $f^*$ were computable, then $\|
  f^* \|_2$ would be computable, and if there were a computable bound
  on the rate of convergence of $\la A_n f \ra$ in the $L^2$ norm,
  then $f^*$ would be a computable element of $L^2([0,1])$. Computable
  bounds on the rate of convergence in either of the other senses
  mentioned in the remainder of the theorem would imply a computable
  bound on the rate of convergence in the $L^2$ norm.

  To prove the assertion in the first sentence, we use a variant of
  constructions described in \cite{avigad:simic:06,simic:07}. First,
  suppose $f$ is the characteristic function of the interval
  $[0,1/2)$, and $\tau$ is the rotation $\tau x = (x + a) \mod 1$,
  where $a$ is either $0$ or $1/2^j$ for some $j \geq 1$. If $a = 0$,
  then $f^* = f$ and $\| f^* \|^2_2 = 1 / 2$. If $a = 1 / 2^j$ for any
  $j \geq 1$, then $f^*$ is the constant function equal to $1/2$, and
  $\| f^* \|^2_2 = 1 / 4$. Thus knowing $\|f^*\|_2$ allows us to
  determine whether $a = 0$.
  
  Our strategy will be to divide $[0,1)$ into intervals
  $[1-2^i,1-2^{i+1})$, and let $\tau$ rotate each interval by a
  computable real number $a_i$ that depends on whether the $i$th
  Turing machine halts. With a suitable choice of $f$, the limit $f^*$
  of the sequence $\la A_n f \ra$ will then encode information
  as to which Turing machines halt on input $0$.

  The details are as follows. Let $T(e,x,s)$ be Kleene's $T$
  predicate, which asserts that $s$ codes a halting computation sequence
  of Turing machine $e$ on input $x$. The predicate $T$ is computable,
  but the set $\{ e \st \ex s T(e,0,s) \}$ is not. Without loss of
  generality, we can assume that for any $e$ and $x$ there is at most
  one $s$ such that $T(e,x,s)$ holds. We will prove the theorem by
  constructing computable $\tau$ and $f$ such that $\{ e \st \ex s
  T(e,0,s) \}$ is computable from $\| f^* \|_2$.

  Define the computable sequence $\la a_i \ra$ of computable
  reals by setting
\[
  a_i = \left\{
    \begin{array}{ll}
      1/2^{i+j+1} & \mbox{for the unique $j$ satisfying  $T(i,0,j)$,
        if there is one}
      \\
      0 & \mbox{otherwise}
    \end{array}
  \right.
\]
Let $\tau$ be the measure preserving transformation that rotates each
interval $[1-2^i,1-2^{i+1})$ by $a_i$. To see that the sequence
$\la a_i \ra$ is computable, remember that we only need to by
able to compute approximations to the $a_i$'s uniformly; we can
do this by testing $T(i,0,j)$ up to a sufficiently large value of $j$. To
see that $\tau$ is computable, remember that it is sufficient to be
able to compute approximations to the value of $T$ applied to any
simple function, given rational approximations to the the $a_i$'s.

Let $f$ be the characteristic function of the set $\bigcup_i
[1-2^i,1-3 \cdot 2^{i+2})$, so that $f$ is equal to $1$ on the left
half of each interval $[1-2^i, 1-2^{i+1})$ and $0$ on the right half.
Let $f^* = \lim_n A_n f$. Then
\[
\| f^* \|^2_2 = 
\sum_{\{i \st \ex j T(i,0,j)\}} \frac{1}{4} \cdot \frac{1}{2^{i+1}} + 
\sum_{\{i \st \lnot \ex j T(i,0,j)\}} \frac{1}{2}\cdot \frac{1}{2^{i+1}}
\]
and
\[
\frac{1}{2} - \| f^* \|^2_2 = \sum_{i \in \NN} \frac{1}{2} \cdot
\frac{1}{2^{i+1}}  - \| f^* \|^2_2 = \sum_{\{i \st \ex j T(i,0,j)\}}
\frac{1}{2^{i+3}}.
\]
Calling this last expression $r$, it suffices to show that $\{ i \st
\ex j T(i,0,j) \}$ is computable from $r$. But the argument is now
standard (see \cite[Section 0.2, Corollary 2a]{pourel:richards:89} or
\cite[Theorem III.2.2]{simpson:99}). For each $n$, let
\[
r_n = \sum_{\{i \st \ex {j \leq n} T(i,0,j)\}} \frac{1}{2^{i+3}}.
\]
Then the sequence $\la r_n \ra$ is computable and increases
monotonically to $r$. To determine whether Turing machine $i$ halts on
input $0$, it suffices to search for an $n$ and an approximation to
$r$ sufficiently good to ensure $| r - r_n | < 1/2^{i+3}$. Then we
only need to check if there is a $j < n$ such that $T(i,0,j)$ holds;
if there isn't, $T(i,0,j)$ is false for every $j$.
\end{proof}

The proof of Theorem~\ref{noncomputability:thm} relied on the fact
that the system we constructed is not ergodic; we used the behavior of
the system on each ergodic component to encode the behavior of a
Turing machine. The next two theorems and their corollary show that if,
on the other hand, the space in question is ergodic, then one always
has a computable rate of convergence.

\begin{theorem}
\label{computability:thm}
Let $T$ be a nonexpansive linear operator on a separable Hilbert space
and let $f$ be an element of that space. Let $f^* = \lim_n A_n
f$. Then $f^*$, and a bound on the rate of convergence of $\la A_n f
\ra$ in the Hilbert space norm, can be computed from $f$, $T$, and $\|
f^* \|$.
\end{theorem}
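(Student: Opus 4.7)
The plan is to exploit the orthogonal decomposition $f = g + h$ from the proof of Theorem~\ref{met}, where $g$ is the projection of $f$ onto $N_f$, $h = f - g \in M$ is the limit $f^*$, and $h \perp g$. The key observation is that the Pythagorean theorem gives
\[
\|g\|^2 = \|f\|^2 - \|h\|^2 = \|f\|^2 - \|f^*\|^2,
\]
so from the data $T$, $f$, and the real number $\|f^*\|$, the norm $\|g\|$ is a computable real. Moreover, the sequences $\la g_i \ra$ and $\la u_i \ra$ defined explicitly just after Theorem~\ref{met} are uniformly computable from $T$ and $f$ (with the harmless convention that a vanishing denominator $\|T^i f - T^{i+1} f\|$ produces a zero update to $g_i$ and $u_i$), and hence $a_i = \|g_i\|$ is a computable, monotone, bounded sequence of reals.

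The second step is to turn knowledge of the limit $\|g\|$ of $\la a_i \ra$ into a computable rate of convergence for $\la g_i \ra$ itself. Given any $\delta > 0$, one simply searches for the least $i$ with $a_i \geq \|g\| - \delta^2/(2\|f\|)$; such an $i$ exists and can be recognized effectively since $\|g\|$ is computable. By Lemma~\ref{aigi} (applied in the limit form, using that $g - g_i \perp g_i$), this $i$ satisfies $\|g - g_i\| \leq \delta$. Let $s(\delta)$ denote the resulting computable function.

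The third step assembles the rate of convergence for $\la A_n f \ra$. Given $\varepsilon > 0$, set $i = s(\varepsilon/2)$ and compute the element $u_i$; its norm $\|u_i\|$ is then a computable real. Writing $A_n f = h + A_n g = h + A_n g_i + A_n(g - g_i)$ and using $g_i = u_i - T u_i$, Lemma~\ref{4facts}.2 and Lemma~\ref{4facts}.3 give
\[
\|A_n f - h\| \leq \frac{2\|u_i\|}{n} + \|g - g_i\| \leq \frac{2\|u_i\|}{n} + \frac{\varepsilon}{2}.
\]
Setting $r(\varepsilon) = \max\bigl(\lceil 4\|u_i\|/\varepsilon\rceil, 1\bigr)$ yields $\|A_n f - h\| \leq \varepsilon$ for all $n \geq r(\varepsilon)$, which is a computable bound on the rate of convergence in the Hilbert space norm. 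The limit $f^* = h$ is then computable as the element represented by the sequence $\la A_{r(1/k)} f\ra_{k \in \NN}$ with the rate $r$ itself.

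The step I expect to carry the most weight is the second one: the translation from knowledge of the limit of the scalar sequence $\la a_i \ra$ into a \emph{uniform} effective rate of convergence for the Hilbert space sequence $\la g_i \ra$. Everything else is either standard computability bookkeeping (the uniform computability of $T^i f$, $g_i$, $u_i$, and $a_i$) or a direct application of inequalities already proved in Section~\ref{constructive:met:section}. The monotonicity of $\la a_i \ra$ is essential here, since the Specker sequence remark at the start of Section~\ref{computability:section} shows that mere knowledge of a noncomputable limit would not be enough; it is the combination of monotonicity and a known limit that makes the rate of convergence of $\la a_i \ra$ effectively computable, and this is what drives the rest of the argument.
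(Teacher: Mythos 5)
Your proposal is correct and follows essentially the same route as the paper's proof: decompose $f = f^* + g$, compute $\|g\| = \sqrt{\|f\|^2 - \|f^*\|^2}$ from the given data, use the computable monotone sequence $a_i = \|g_i\|$ converging to this known limit together with the Pythagorean estimate of Lemma~\ref{aigi} to effectively locate an $i$ with $\|g - g_i\|$ small, and then use $g_i = u_i - Tu_i$ and Lemma~\ref{4facts}.2 to choose $n$ large relative to $\|u_i\|$. The only cosmetic difference is that you state the bound as a rate of convergence to $f^*$ (which immediately yields the computability of $f^*$ and the Cauchy rate), whereas the paper bounds $\|A_m f - A_n f\|$ directly.
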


\begin{proof}
  It suffices to show that one can compute a bound on the rate of
  convergence of $\la A_n f \ra$ from the given data. Assuming $f$ is
  not already a fixed point of $T$, write $f = f^* + g$, and let the
  sequences $\la g_i \ra$, $\la u_i \ra$, and $\la a_i \ra$ be defined
  as in Section~\ref{constructive:met:section}. Then $g = \lim_i g_i$,
  and $g_i = u_i - Tu_i$ and $a_i = \| g_i \|$ for every $i$. Let $a =
  \lim_i a_i$. Then $a = \| g \| = \sqrt{\|f\|^2 - \|f^* \|^2}$ can
  be computed from $f$ and $\| f^* \|_2$. For any $m$, $n \geq m$, and
  $i$, we have
\begin{equation*}
\begin{split}
  \| A_m f - A_n f \| & =  \| A_m g - A_n g \| \\
       & \leq \| A_m g_i - A_n g_i \| + \| A_m (g - g_i) \| + 
         \| A_n (g - g_i) \| \\
       & \leq \| A_m g_i \| + \| A_n g_i \| + 2\| g - g_i \| \\
       & \leq \| A_m g_i \| + \| A_n g_i \| + 2 \sqrt{2(a - a_i) \| f
         \| }
\end{split}
\end{equation*}
as in the proof of Lemma~\ref{aigi}. Given $\varepsilon$,
using the given data we can now find an $i$ such that the last term on
the right hand side is less than $\varepsilon / 2$, compute $u_i$, and
then, using Lemma~\ref{4facts}.2, determine an $m$ large enough so that
for any $n \geq m$, $\| A_m g_i \| + \| A_n g_i \| < \varepsilon / 2$.
\end{proof}

\begin{theorem}
  Let $\mdl X = \la X, \mdl B, \mu \ra$ be a separable measure
  space, let $\tau$ be a measure preserving transformation of $\mdl
  X$, and let $T$ be the associated Koopman operator. Then for any $f$
  in $L^2(\mdl X)$, bounds on the rate of convergence in the $L^2$
  norm, in the $L^1$ norm, and in the sense of
  Theorem~\ref{explicit:pet} can be computed from $f$, $T$, and $\|
  f^* \|_2$.
\end{theorem}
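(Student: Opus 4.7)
My plan is to handle the three convergence notions separately, reducing the $L^2$ and $L^1$ cases to Theorem~\ref{computability:thm} and adapting the proof of Theorem~\ref{final:pet:theorem} for the pointwise case.

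The $L^2$ case is immediate: $T$ is an isometry on the separable Hilbert space $L^2(\mdl X)$, $f$ is given, and $\|f^*\|_2$ is given, so Theorem~\ref{computability:thm} directly produces both the limit $f^*$ and a computable $L^2$ rate of convergence. The $L^1$ case is then free, since in a probability space Cauchy--Schwarz gives $\|g\|_1 \leq \|g\|_2$ for any $g \in L^2(\mdl X)$, and hence the same witness serves as an $L^1$ rate.

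For the pointwise case (Theorem~\ref{explicit:pet}), given $\lambda_1, \lambda_2 > 0$ the goal is an $n$ with $\mu(\{ x \st \max_{n \leq m \leq k} |A_m f(x) - A_n f(x)| > \lambda_1\}) \leq \lambda_2$ uniformly in $k \geq n$. The plan is to reuse the ingredients from the proof of Theorem~\ref{final:pet:theorem}, exploiting the fact that with $\|f^*\|_2$ in hand we can compute $\|g\|_2 = \sqrt{\|f\|_2^2 - \|f^*\|_2^2}$, so that the computable monotone sequence $a_i = \|g_i\|_2$ lets us locate a concrete $j$ with $\|g_j - g\|_2 \leq \varepsilon$ for any target $\varepsilon$ (via the Pythagorean argument of Lemma~\ref{aigi}, with the limit $g$ in place of $g_i$). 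I would then (i) choose $\varepsilon := \lambda_1 \lambda_2 / 16$, find such $j$, and compute $u_j$ from the explicit recursion so that $g_j = u_j - Tu_j$; (ii) split $u_j = u' + u''$ via Lemma~\ref{u:decomp:lemma} at threshold $L := \lceil 64 \|u_j\|_2^2 / (\lambda_1 \lambda_2) \rceil$; (iii) set $n := \lceil 16 L / \lambda_1 \rceil$.

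The verification decomposes, for each $m \geq n$,
\[
A_m f - A_n f \;=\; (A_m g_j - A_n g_j) \;+\; (A_m \xi - A_n \xi), \qquad \xi := f - g_j - f^*,
\]
using $A_m f^* = f^*$. The first difference satisfies $|A_m g_j|(x) \leq 2L/m + |A_m(u'' - Tu'')|(x)$: the choice of $n$ makes $2L/m \leq \lambda_1/8$, and the maximal ergodic theorem applied to $u'' - Tu''$ (whose $L^1$ norm is controlled in terms of $\|u_j\|_2^2/L$) handles the second term uniformly in $m$ outside a small exceptional set. The second difference is bounded directly by the maximal ergodic theorem applied to $\xi$, since $\|\xi\|_1 \leq \|\xi\|_2 \leq \varepsilon$. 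The main obstacle is purely the bookkeeping needed to arrange $\varepsilon, L, n$ so that the pointwise contributions sum to $\lambda_1$ and the exceptional-set measures sum to $\lambda_2$, while keeping each parameter computable from the previous; the essential structural point distinguishing this argument from the one giving Theorem~\ref{final:pet:theorem} is that access to $f^*$ replaces an iterated no-counterexample search over a function $K$ with a single choice of $j$, and the maximal ergodic theorem gives control uniform in all $m \geq n$ rather than only on a window $[n, K(n)]$.
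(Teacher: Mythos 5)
Your proposal is correct and follows essentially the same route as the paper: the $L^2$ and $L^1$ cases are reduced to Theorem~\ref{computability:thm} exactly as in the paper, and your pointwise argument uses the same decomposition $f = g_j + (f - g_j - f^*) + f^*$ with $A_n f^* = f^*$, controlling the remainder term via Corollary~\ref{maximal:cor} (using $\|f^*\|_2$ to compute $\|g\|_2$ and hence locate $j$) and the $A_m g_j$, $A_n g_j$ terms by the $u' + u''$ truncation, which is precisely the content of Lemma~\ref{u:decomp:lemma} and Lemma~\ref{other:terms:lemma} that the paper cites rather than re-derives.
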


\begin{proof}
  The previous theorem provides bounds on the rate of convergence in
  the $L^2$ norm, and hence in the $L^1$ norm as well. 

  For convergence in the sense of Theorem~\ref{explicit:pet}, consider
  inequality (\ref{pointwise:calc}), where now $h$ is $f^*$ and $f_i$ is 
  $g_i + f^*$:
\begin{equation*}
\begin{split}
  | A_m f(x) - A_n f(x) |  & \leq | A_m(f - (g_i + f^*))(x)| \\
  & \quad \quad + |A_m g_i(x) | + | A_n g_i(x) | + |A_n (f - (g_i +
  f^*))(x) |.
\end{split}
\end{equation*}
The sequence $\la f - (g_i + f^*)\ra_{i \in \NN}$ converges to $0$ in
the $L^2$ norm, and, as in the proof of
Theorem~\ref{computability:thm}, we can compute a bound on the rate of
convergence from the given data. Using Corollary~\ref{maximal:cor} we
can make the first and last terms small outside of a small set of
exceptions, independent of $m$ and $n$, by making $i$ sufficiently
large. Using Lemma~\ref{other:terms:lemma} we can then determine how
large $n$ has to be so that the remaining terms are small outside a
small set of exceptions, for all $m > n$.
\end{proof}

\begin{corollary}
  With $\mdl X = \la X, \mdl B, \mu \ra$ as above, suppose $\tau$ is
  an ergodic measure preserving transformation. Then for any $f$
  in $L^2(\mdl X)$, bounds on the rate of convergence in the $L^2$
  norm, in the $L^1$ norm, and in the sense of
  Theorem~\ref{explicit:pet} can be computed from $f$, $T$, and $\mu$.
  
  For any $f$ in $L^1(\mdl X)$, bounds on the rate of convergence in
  the $L^1$ norm and in the sense of Theorem~\ref{explicit:pet} can be
  computed from $T$, $\mu$, and a sequence of $L^2(\mdl X)$ functions
  approximating $f$ in the $L^1$ norm (together with a rate of
  convergence).
\end{corollary}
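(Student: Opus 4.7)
The plan is to reduce both parts of the corollary to the preceding theorem by making use of ergodicity for part one and by approximating an $L^1$ function by $L^2$ functions for part two.

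For the first part, I would use the fact that when $\tau$ is ergodic and $\mdl X$ is a probability space, the limit $f^*$ is a.e.~equal to the constant function $\int f \, d\mu$, so $\| f^* \|_2 = | \int f \, d\mu |$. Since $f$ and $\mu$ are given (and $f$ is a computable $L^2$ function), the integral $\int f \, d\mu$ can be computed to arbitrary precision by integrating approximating simple functions. Thus $\| f^* \|_2$ is computable from $f$ and $\mu$, and the preceding theorem yields the desired bounds on the rate of convergence in the $L^2$ norm, in the $L^1$ norm, and in the sense of Theorem~\ref{explicit:pet}.

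For the $L^1$ part of the second claim, given $f \in L^1(\mdl X)$ and a computable sequence $\la f_k \ra$ of $L^2$ functions with a computable rate of convergence to $f$ in $L^1$, I would bound
\[
\| A_m f - A_n f \|_1 \leq \| A_m(f - f_k) \|_1 + \| A_m f_k - A_n f_k \|_1 + \| A_n(f - f_k) \|_1 \leq 2 \| f - f_k \|_1 + \| A_m f_k - A_n f_k \|_1,
\]
using nonexpansiveness of $A_m$ on $L^1$. Given $\varepsilon > 0$, first pick $k$ so that $\| f - f_k \|_1 < \varepsilon / 3$ using the given rate of convergence; then apply the first part of the corollary to $f_k$ to find $n$ such that $\| A_m f_k - A_n f_k \|_1 \leq \| A_m f_k - A_n f_k \|_2 < \varepsilon / 3$ for all $m \geq n$ (invoking $\| g \|_1 \leq \| g \|_2$ on a probability space).

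For the pointwise version, I would use the same decomposition, and apply Corollary~\ref{maximal:cor} to the sequences $\la A_m(f - f_k) \ra$ to bound
\[
\mu\Bigl(\Bigl\{ x \st \sup_{m \geq 1} |A_m(f - f_k)(x)| > \lambda_1 / 3 \Bigr\}\Bigr) \leq 3 \| f - f_k \|_1 / \lambda_1,
\]
choosing $k$ so that the right-hand side is at most $\lambda_2 / 2$. Then apply the first part of the corollary to $f_k$, with parameters $\lambda_1 / 3$ and $\lambda_2 / 2$, to obtain an $n$ controlling $\max_{n \leq m \leq K(n)} | A_m f_k(x) - A_n f_k(x)|$ outside a set of measure $\lambda_2 / 2$. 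Combining the two exceptional sets gives the conclusion in the sense of Theorem~\ref{explicit:pet}. The only mild subtlety is that everything is computable uniformly in the data, which follows because each step invokes a previously established computability result; there is no genuine obstacle beyond bookkeeping.
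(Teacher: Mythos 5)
Your proposal is correct and follows essentially the same route as the paper: ergodicity gives $\| f^* \|_2 = | \int f \, d\mu |$, which reduces the first part to the preceding theorem, and the second part uses the $L^1$-nonexpansiveness of $A_n$ together with Corollary~\ref{maximal:cor} applied to $f - f_k$ before invoking the $L^2$ result for $f_k$. The explicit $\varepsilon/3$, $\lambda_1/3$, $\lambda_2/2$ bookkeeping you supply is just a spelled-out version of the paper's argument.
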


\begin{proof}
  If the system is ergodic, $f^*$ is a.e.~equal to the constant $\int
  f \; d\mu$, in which case $\| f^* \|_2 = | \int f \; d\mu |$. Thus
  $\| f^* \|_2$ is computable from $f$ and $\mu$, and we can apply the
  previous theorem.

  Suppose now $f \in L^1$ and we are given a sequence $\la f_i \ra$ of
  $L^2$ functions approaching $f$ in the $L^1$ norm, together with a
  rate of convergence. Since 
\[
  \| A_n f - A_n f_i \|_1 = \| A_n (f - f_i) \|_1 \leq \| f - f_i \|_1
\]
 for every $n$, we can make $\| A_m f - A_n f \|_1$ small by
  first picking $i$ large enough and then ensuring that $\| A_m f_i -
  A_n f_i \|_1$ is small. Similarly, we can make $| A_m f(x) - A_n
  f(x) |$ small outside a small set of exceptions by first choosing
  $i$ sufficiently large, applying Corollary~\ref{maximal:cor}, and
  then using the previous theorem to choose $n$ large enough so that
  $| A_m f_i(x) - A_n f_i(x) |$ is small outside a small set of
  exceptions for every $m > n$. 
\end{proof}

The issues raised here can be considered from a spectral standpoint as
well. If $T$ is a unitary transformation of a Hilbert space, then the
spectral measure $\sigma_f$ associated to $f$ can be described in the
following way. For each $k \in \ZZ$, let $b_k =\lip T^k f, f \ra$ be
the $k$th autocorrelation coefficient of $f$. Let $\TT$ be the circle
with radius $1$, identified with the interval $[0,2 \pi)$. Let $I$ be
the linear operator on the complex Hilbert space $L^\CC_2(\TT)$
defined with respect to the basis $\lip e^{i k \theta} \ra_{k \in
  \ZZ}$ by $I(e^{i k \theta}) = b_k$. The sequence $b_k$ is a positive
definite sequence, and so by Bochner's theorem (see
\cite{katznelson:04,nadkarni:98}), there is a positive measure
$\sigma_f$ on $\TT$ such that $I(g) = \int g \; d\sigma_f$. It is well
known that $\| f^* \|_2^2 = \sigma_f(\{0\})$, and Kachurovskii
\cite[page 670]{kachurovskii:96} shows that if $f^* = 0$, then for
every $n$ and $\delta \in (0, \pi)$,
\[
\| A_n f \|_2 \leq \sqrt{\sigma_f(-\delta,\delta)} + \frac{4 \| f
  \|_2}{n \sin (\delta/2)}.
\]
This last expression shows that, in the case where $f^* = 0$, one can
compute a bound the rate of convergence of $\la A_n f \ra$ from a
bound on the rate of convergence of $\sigma_f(-\delta, \delta)$ as
$\delta$ approaches $0$. The problem is that $I$ is not necessarily a
bounded linear transformation, and so $\sigma_f$ is not generally
computable from $f$. Theorem~\ref{computability:thm} above shows that
for any $f$ it is nonetheless possible to compute $f^*$ from
$\sigma_f(\{0\})$, $f$, and $T$.

  For any set of natural numbers $X$, let $X'$ denote the halting
  problem relative to $X$. The proof of
  Theorem~\ref{noncomputability:thm} shows, more generally, the
  following:
\begin{theorem}
  For any set of natural numbers $X$, there are a Lebesgue-measure
  preserving transformation $\tau$ of $[0,1]$, computable from $X$,
  and a computable element $f$ of $L^2([0,1])$, such that $X'$ is
  computable from $\| f^* \|_2$.
\end{theorem}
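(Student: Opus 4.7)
The plan is to modify the construction from the proof of Theorem~\ref{noncomputability:thm} so that it relativizes, fixing the one step that does not decouple from $X$. The original decoding of the encoded set from $\|f^*\|_2$ uses a monotone computable lower approximation $r_n \nearrow r$ built from the Kleene $T$-predicate; its relativized counterpart, built from $T^X$, is only $X$-computable, so a naive relativization yields only ``$X'$ is computable from $\|f^*\|_2$ and $X$,'' which is strictly weaker than what is claimed. The fix is to arrange the encoding so that $X'$ is recoverable from $r$ itself, without any auxiliary enumeration, which I would achieve by spreading the ``bit positions'' widely enough that the underlying place-value expansion of $r$ is unique.

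Concretely, replace the intervals $[1 - 2^{-i}, 1 - 2^{-i-1})$ of the original construction by $I_i = [1 - 4^{-i}, 1 - 4^{-i-1})$ for $i \geq 0$, with $|I_i| = (3/4)\,4^{-i}$ and $\sum_i |I_i| = 1$. Let $f = \chi_A$, where $A$ is the union of the left halves of the $I_i$; this is a computable element of $L^2([0,1])$ that does not depend on $X$. Using the relativized Kleene predicate $T^X$, let $j_i$ be the unique $s$ satisfying $T^X(i,0,s)$ if one exists, and define $\tau$ to rotate $I_i$ by $a_i = |I_i|/2^{j_i+1}$ when $i \in X'$ and by $a_i = 0$ otherwise. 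As in the original, $\langle a_i \rangle$ and hence the measure-preserving transformation $\tau$ are computable from $X$. The ergodic analysis on each $I_i$ matches the one in the original proof: when $i \notin X'$, $\tau|_{I_i}$ is the identity, so $f^*|_{I_i} = f|_{I_i}$ contributes $|I_i|/2$ to $\|f^*\|_2^2$; when $i \in X'$, after $2^{j_i}$ iterations $\tau$ shifts every point of $I_i$ by $|I_i|/2$, so each orbit has exactly half of its $2^{j_i+1}$ points in each half of $I_i$, forcing $f^*|_{I_i} \equiv 1/2$ and a contribution of $|I_i|/4$.

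Summing over $i$ gives
\[
\|f^*\|_2^2 \;=\; \tfrac{1}{2} - \tfrac{1}{4}\sum_{i \in X'} |I_i|
\;=\; \tfrac{1}{2} - \tfrac{3}{16}\sum_{i \in X'} 4^{-i},
\]
so $r := (16/3)\bigl(\tfrac{1}{2} - \|f^*\|_2^2\bigr) = \sum_{i \in X'} 4^{-i}$ is a real computable from $\|f^*\|_2$ alone. The decisive gain over the dyadic encoding is that $X'$ is now computable from $r$ itself: each digit $d_i \in \{0,1\}$, and the geometric tail $\sum_{i > k} 4^{-i} = 4^{-k}/3$ is strictly smaller than the value $4^{-k}$ of a single digit, so the representation $r = \sum_i d_i 4^{-i}$ is unique. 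I would decode by induction on $k$: given $d_0,\ldots,d_{k-1}$ and $a := \sum_{j<k} d_j 4^{-j}$, the value $r - a$ lies in $[0, 4^{-k}/3]$ when $d_k = 0$ and in $[4^{-k},(4/3)\,4^{-k}]$ when $d_k = 1$, and never in the open gap $(4^{-k}/3, 4^{-k})$; requesting rational approximations $q$ to $r$ of ever finer precision and testing whether $q - a$ exceeds the threshold $4^{-k}/2$ determines $d_k$ in finite time. The main point requiring care, and essentially the only new work beyond the original argument, is this uniqueness-of-expansion step for the spread-out base, which replaces the role played by the (now $X$-dependent) lower-approximation sequence.
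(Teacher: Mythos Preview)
Your proposal is correct and considerably more careful than the paper, which offers no proof beyond the assertion that the proof of Theorem~\ref{noncomputability:thm} shows the relativized statement ``more generally.'' You are right that the decoding step there, as written, uses the monotone sequence $\la r_n \ra$ built from the Kleene predicate, and that in the relativized setting this sequence is only $X$-computable; so that argument, taken literally, yields only that $X'$ is computable from $\|f^*\|_2$ together with $X$. Your base-$4$ modification closes this gap cleanly: the separation between ``digit $0$, maximal tail'' and ``digit $1$, minimal tail'' lets you read off $X'$ from $r$ alone, with no appeal to an auxiliary enumeration or to any structural properties of $X'$.

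It is worth noting, though, that the original base-$2$ construction already suffices, with a different decoding. In the relativized version one has $r = \sum_{i \in X'} 2^{-(i+3)}$, and since any jump $X'$ is both infinite and co-infinite, $r$ is not a dyadic rational and its binary expansion is unique. One can then recover the digits directly from approximations to $r$: at stage $k$, with $a = \sum_{j<k} d_j\, 2^{-(j+3)}$ already determined, one has $r - a < 2^{-(k+3)}$ strictly when $k \notin X'$ (the tail of $X'$ is not all of $\{j : j > k\}$) and $r - a > 2^{-(k+3)}$ strictly when $k \in X'$ (the tail is nonempty), so a sufficiently good approximation to $r$ decides $d_k$. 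This is presumably what the paper intends, though it is not spelled out. Your base-$4$ version trades this appeal to the structure of $X'$ for a uniform combinatorial gap, which is a genuine expository improvement.
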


The results in this section can be adapted to yield information with
respect to provability in restricted axiomatic frameworks.
Constructive mathematics, for example, aims to use only principles that
can be given a direct computational interpretation (see, for example,
\cite{bishop:67,bishop:bridges:85}). There is also a long tradition of
developing mathematics in classical theories that are significantly
weaker than set theory. In the field of reverse mathematics,
this is done with an eye towards calibrating the degree of
nonconstructivity of various theorems of mathematics (see
\cite{simpson:99}); in the field of proof mining, this is done with an
eye towards mining proofs for additional information (see
Section~\ref{proof:mining:section}, below).

When a theorem of modern mathematics is not constructively valid, one
can search for an ``equal hypothesis'' substitute, i.e.~a constructive
theorem with the same hypotheses, and with a conclusion that is easily
seen to be classically equivalent to the original theorem. Bishop's
upcrossing inequalities, as well as the results of
Spitters~\cite{spitters:06}, are of this form. The results of
Sections~\ref{constructive:met:section} and \ref{pointwise:section}
are also of this form, and are provable both constructively and in the
weak base theory $\na{RCA_0}$ of reverse mathematics. One can also
look for ``equal conclusion'' substitutes, by seeking classically
equivalent but constructively stronger
hypotheses. Theorem~\ref{computability:thm} has this flavor, but it is
hard to see how one can turn it into a constructive theorem, because
it is not clear how one can refer to $\| f^* \|_2$ without
presupposing that $\la A_n f \ra$ converges.  One can show,
constructively and in $\na{RCA_0}$, that if the projection of $f$ on
the subspace $N$ described in the proof of Theorem~\ref{met} exists
then $\la A_n f \ra$ converges; but the assumption that the projection
of $f$ on $M$ exists is not sufficient (see
\cite{avigad:simic:06,spitters:06} and the corrigendum to the
latter). An interesting equal conclusion constructive version of the
pointwise ergodic theorem can be found in Nuber \cite{nuber:72}.

Theorem~\ref{noncomputability:thm} shows that the mean and pointwise
ergodic theorems do not have constructive proofs. In fact, in the
setting of reverse mathematics, they are equivalent to a set-existence
principle known as arithmetic comprehension over $\na{RCA_0}$. For
stronger results, see \cite{avigad:simic:06,simic:07}.


\section{Proof-theoretic techniques}
\label{proof:mining:section}

The methods we have used in Sections~\ref{constructive:met:section}
and \ref{pointwise:section} belong to a branch of mathematical logic
called ``proof mining,'' where the aim is to develop general
techniques that allow one to extract additional information from
nonconstructive or ineffective mathematical proofs. The program is
based on two simple observations: first, ordinary mathematical proofs
can typically be represented in formal systems that are much weaker
then axiomatic set theory; and, second, proof theory provides general
methods of analyzing formal proofs in such theories, with an eye
towards locating their constructive content. Traditional research has
aimed to show that many classical theories can be reduced to
constructive theories, at least in principle, and has developed a
variety of techniques for establishing such reductions. These include
double-negation translations, cut-elimination, Herbrand's theorem,
realizability, and functional interpretations. (The \emph{Handbook of
  Proof Theory} \cite{buss:98} provides an overview of the range of
methods.) Proof mining involves adapting and specializing these
techniques to specific mathematical domains where additional
information can fruitfully be sought.

Our constructive versions of the mean and pointwise ergodic theorems
are examples of Kreisel's no-counterexample interpretation
\cite{kreisel:51,kreisel:59alt}. Effective proofs of such translated
statements can often be obtained using variants of G\"odel's
functional (``Dialectica'') interpretation \cite{goedel:58} (see also
\cite{avigad:feferman:98}). Ulrich Kohlenbach has shown that the
Dialectica interpretation can be used as an effective tool; see, for
example, \cite{kohlenbach:05,kohlenbach:unp:b}. For example, our
constructive mean ergodic theorem,
Theorem~\ref{constructive:met:final}, provides bounds that depend only
on $K$ and $\| f \| / \varepsilon$. In fact, the usual proofs of the
mean ergodic theorem can be carried out in axiomatic frameworks for
which the general metamathematical results of Gerhardy and Kohlenbach
\cite{gerhardy:kohlenbach:05} guarantee such uniformity.

While the methods of the paper just cited do show how one can find an
explicit expression for the requisite bound, the resulting expression
would not yield, a priori, useful bounds. For that, a more refined
analysis, due to Kohlenbach \cite{kohlenbach:97a}, can be used. The
nonconstructive content of the Riesz proof of the mean ergodic theorem
can be traced to the use of the principle of convergence for bounded
monotone sequences of real numbers. In formal symbolic terms, the fact
that every bounded increasing sequence of real numbers converges can
be expressed as follows:
\begin{multline*}
\forall {a : \NN \to \RR, c \in \RR}
(\fa i (a_i \leq a_{i+1}
\leq c) \rightarrow \\
\fa {\varepsilon > 0} \ex n \fa {m \geq
n} (|a_m - a_n| \leq \varepsilon) ).
\end{multline*}
Using a principle known as ``arithmetic comprehension,'' we can
conclude that there is a function, $r$, bounding the rate of
convergence:
\begin{multline}
\label{convergence:eq}
\fa {a : \NN \to \RR, c \in \RR}
(\fa i (a_i \leq a_{i+1}
\leq c) \rightarrow \\
\ex r \fa {\varepsilon > 0} \fa {m \geq
r(\varepsilon)} (|a_m - a_{r(\varepsilon)} | \leq \varepsilon) ).
\end{multline}
In general, $r$ cannot be computed from the sequence $\la a_i \ra$. On
the other hand, the proof of Theorem~\ref{computability:thm} shows
that witnesses to the mean ergodic theorem can be computed from a
bound $r$ on the rate of convergence, for a sequence $\la a_i \ra$
that is explicitly computed from $T$ and $f$. Moreover, the proof of
this fact can be carried out in a weak theory.  Kohlenbach's results
show that, in such situations, one can compute explicit witnesses to
the Dialectica translation to the theorem in question from a weaker
version of principle (\ref{convergence:eq}):
\begin{multline*}
\fa {a : \NN \to \RR, c \in \RR}
( \fa i (a_i \leq a_{i+1}
\leq c) \rightarrow \\ 
\forall {\varepsilon > 0, M} \ex n (M(n) \geq n
\limplies  (|a_{M(n)} - a_n| \leq \varepsilon) ).
\end{multline*}
This last principle can be given a clear computational interpretation:
given $\varepsilon$ and $M$, one can iteratively compute $0, M(0),
M(M(0)), \ldots$ until one finds a value of $n$ such that $|a_{M(n)} -
a_n| \leq \varepsilon$. This information can then be used to witness
the Dialectica translation of the conclusion, that is, our
constructive mean ergodic theorem.

This strategy is clearly in evidence in
Section~\ref{constructive:met:section}. In practice, it is both
infeasible and unnecessary to express the initial proof in completely
formal terms. Rather, one undertakes a good deal of heuristic
manipulation of the original proof, using the translation to determine
what form intermediate lemmas should have, and how they should be
combined. The metamathematical results are therefore used as a guide,
providing both guarantees as to what results can be achieved, and the
strategies needed to achieve them.


\begin{thebibliography}{10}

\bibitem{avigad:feferman:98}
Jeremy Avigad and Solomon Feferman.
\newblock G\"odel's functional (``{D}ialectica'') interpretation.
\newblock In \cite{buss:98}, pages 337--405. 

\bibitem{avigad:simic:06}
Jeremy Avigad and Ksenija Simic.
\newblock Fundamental notions of analysis in subsystems of second-order
  arithmetic.
\newblock {\em Ann. Pure Appl. Logic}, 139:138--184, 2006.

\bibitem{billingsley:78}
Patrick Billingsley.
\newblock {\em Ergodic Theory and Information}.
\newblock Robert E. Krieger Publishing Co., Huntington, N.Y., 1978.
\newblock Reprint of the 1965 original.

\bibitem{bishop:66}
Errett Bishop.
\newblock An upcrossing inequality with applications.
\newblock {\em Michigan Math. J.}, 13:1--13, 1966.

\bibitem{bishop:67}
Errett Bishop.
\newblock {\em Foundations of Constructive Analysis}.
\newblock McGraw-Hill, New York, 1967.

\bibitem{bishop:67b}
Errett Bishop.
\newblock A constructive ergodic theorem.
\newblock {\em J. Math. Mech.}, 17:631--639, 1967/1968.

\bibitem{bishop:bridges:85}
Errett Bishop and Douglas Bridges.
\newblock {\em Constructive Mathematics}.
\newblock Springer, Berlin, 1985.

\bibitem{buss:98}
Samuel~R. Buss, editor.
\newblock {\em The Handbook of Proof Theory}.
\newblock North-Holland, Amsterdam, 1998.

\bibitem{gerhardy:kohlenbach:05}
Philipp Gerhardy and Ulrich Kohlenbach.
\newblock General logical metatheorems for functional analysis.
\newblock \emph{Trans. Am. Math. Soc.}, 360:2615--2660, 2008.

\bibitem{goedel:58}
Kurt G{\"o}del.
\newblock {\"U}ber eine bisher noch nicht ben{\"u}tzte {E}rweiterung des
  finiten {S}tandpunktes.
\newblock {\em Dialectica}, 12:280--287, 1958.
\newblock Reproduced with English translation in Feferman et al., eds.,
  \emph{Kurt G\"odel: Collected Works}, volume 2, Oxford University Press, New
  York, 1990, pages 241--251.

\bibitem{hochman:unp}
Michael Hochman.
\newblock Upcrossing inequalities for stationary sequences and applications.
\newblock arXiv:math/0608311v2.

\bibitem{ivanov:96}
V.~V. Ivanov.
\newblock Oscillations of averages in the ergodic theorem.
\newblock {\em Dokl. Akad. Nauk}, 347:736--738, 1996.

\bibitem{jones:et:al:98}
Roger~L. Jones, Robert Kaufman, Joseph~M. Rosenblatt, and M{\'a}t{\'e} Wierdl.
\newblock Oscillation in ergodic theory.
\newblock {\em Ergodic Theory Dynam. Systems}, 18:889--935, 1998.

\bibitem{jones:et:al:99}
Roger~L. Jones, Joseph~M. Rosenblatt, and M{\'a}t{\'e} Wierdl.
\newblock Counting in ergodic theory.
\newblock {\em Canad. J. Math.}, 51:996--1019, 1999.

\bibitem{kachurovskii:96}
A.~G. Kachurovski{\u\i}.
\newblock Rates of convergence in ergodic theorems.
\newblock {\em Uspekhi Mat. Nauk}, 51:73--124, 1996.
\newblock Translation in \emph{Russian Math. Surveys}, 51:653--703,
1996.

\bibitem{kalikow:weiss:99}
Steven Kalikow and Benjamin Weiss.
\newblock Fluctuations of ergodic averages.
\newblock {\em Illinois J. Math.}, 43:480--488, 1999.

\bibitem{katznelson:04}
Yitzhak Katznelson.
\newblock {\em An Introduction to Harmonic Analysis}.
\newblock Cambridge University Press,
  Cambridge, third edition, 2004.

\bibitem{kohlenbach:97a}
Ulrich Kohlenbach.
\newblock {E}limination of {S}kolem functions for monotone formulas in
  analysis.
\newblock {\em Arch. Math. Logic}, 37:363--390, 1998.

\bibitem{kohlenbach:05}
Ulrich Kohlenbach.
\newblock {S}ome logical metatheorems with applications in functional analysis.
\newblock {\em Trans. Amer. Math. Soc.}, 357:89--128, 2005.

\bibitem{kohlenbach:unp}
Ulrich Kohlenbach.
\newblock \emph{Applied Proof Theory: Proof Interpretations and their Use in
Mathematics.}
\newblock Springer Monographs in Mathematics, to appear.

\bibitem{kohlenbach:unp:b}
Ulrich Kohlenbach.
\newblock Effective bounds from proofs in abstract functional
analysis.
\newblock To appear in B. Cooper et al., eds., \emph{CiE
2005 New Computational Paradigms: Changing Conceptions of What is
Computable.}, Springer, Berlin.

\bibitem{kreisel:51}
Georg Kreisel.
\newblock On the interpretation of non-finitist proofs, part {I}.
\newblock {\em J. Symbolic Logic}, 16:241--267, 1951.

\bibitem{kreisel:59alt}
Georg Kreisel.
\newblock Interpretation of analysis by means of constructive functionals of
  finite type.
\newblock In Arendt Heyting, editor, {\em Constructivity in Mathematics}, pages
  101--128. North-Holland, Amsterdam, 1959.

\bibitem{krengel:78}
Ulrich Krengel.
\newblock On the speed of convergence in the ergodic theorem.
\newblock {\em Monatsh. Math.}, 86:3--6, 1978/79.

\bibitem{krengel:85}
Ulrich Krengel.
\newblock {\em Ergodic Theorems}.
\newblock Walter de Gruyter \& Co., Berlin, 1985.

\bibitem{nadkarni:98}
M.~G. Nadkarni.
\newblock {\em Spectral Theory of Dynamical Systems}.
\newblock Birkh\"auser, Basel, 1998.

\bibitem{nuber:72}
J.~A. Nuber.
\newblock A constructive ergodic theorem.
\newblock {\em Trans. Amer. Math. Soc.}, 164:115--137, 1972.
\newblock Erratum: \emph{Trans. Amer. Math. Soc.}, 216:393, 1976.

\bibitem{pourel:richards:89}
Marian~B. Pour-El and J.~Ian Richards.
\newblock {\em Computability in Analysis and Physics}.
\newblock Springer, Berlin, 1989.

\bibitem{simic:07}
Ksenija Simic.
\newblock The pointwise ergodic theorem in subsystems of second-order
  arithmetic.
\newblock {\em J. Symbolic Logic}, 72:45--66, 2007.

\bibitem{simpson:99}
Stephen~G. Simpson.
\newblock {\em Subsystems of Second-order Arithmetic}.
\newblock Springer, Berlin, 1999.

\bibitem{spitters:06}
Bas Spitters.
\newblock A constructive view on ergodic theorems.
\newblock {\em J. Symbolic Logic}, 71:611--623, 2006.
\newblock Corrigendum: {\em J. Symbolic Logic 71}, 
4:1431--1432, 2006.

\bibitem{walters:00}
Peter Walters.
\newblock {\em An Introduction to Ergodic Theory}.
\newblock Springer, New York, 2000.

\bibitem{weihrauch:00}
Klaus Weihrauch.
\newblock {\em Computable Analysis: An Introduction}.
\newblock Springer, Berlin, 2000.

\end{thebibliography}
 

\end{document}